\newtheorem{lemma}{Lemma}
\newtheorem{theorem}{Theorem}
\newtheorem{corollary}{Corollary}
\newcommand{\norm}[1]{\left\lVert#1\right\rVert}
\newcommand\Tstrut{\rule{0ex}{2.7ex}}         
\newcommand\Bstrut{\rule[-1.3ex]{0pt}{0pt}}   
\newcommand\Strut{\rule{0ex}{0ex}}         
\definecolor{MyGreen}{rgb}{0,0,0}
\definecolor{red}{rgb}{0,0,0}
\title{``Interpolated Factored Green Function'' Method\\ for accelerated solution of Scattering Problems} \author{Christoph
 Bauinger\footnote{Computing and Mathematical Sciences, Caltech, Pasadena, CA 91125, USA} \and Oscar P. Bruno$^*$}
\begin{document}
\date{}
\maketitle
\begin{abstract}
  This paper presents a novel {\em Interpolated Factored Green
    Function} method (IFGF) for the accelerated evaluation of the
  integral operators in scattering theory and other areas. Like
  existing acceleration methods in these fields, the IFGF algorithm
  evaluates the action of Green function-based integral operators at a
  cost of $\mathcal{O}(N\log N)$ operations for an $N$-point surface
  mesh. The IFGF strategy, which leads to an extremely simple
  algorithm, capitalizes on slow variations inherent in a certain
  Green function {\em analytic factor}, which is analytic up to and
  including infinity, and which therefore allows for accelerated
  evaluation of fields produced by groups of sources on the basis of a
  recursive application of classical interpolation methods. Unlike
  other approaches, the IFGF method does not utilize the Fast Fourier
  Transform (FFT), and is thus better suited than other methods for
  efficient parallelization in distributed-memory computer
  systems. Only a serial implementation of the algorithm is considered
  in this paper, however, whose efficiency in terms of memory and
  speed is illustrated by means of a variety of numerical
  experiments---including a 43 min., single-core operator evaluation
  (on 10 GB of peak memory), with a relative error of
  $1.5\times 10^{-2}$, for a problem of acoustic size of 512$\lambda$.
\end{abstract}
\vspace{0.5 cm}
\noindent
{\bf Keywords:} Scattering, Green Function, Integral Equations, Acceleration
\maketitle
\newpage
\section{\label{sec:introduction}Introduction}

This paper presents a new methodology for the accelerated evaluation
of the integral operators in scattering theory and other areas. Like
existing acceleration methods, the proposed {\em Interpolated Factored
  Green Function} approach (IFGF) can evaluate the action of Green
function based integral operators at a cost of $\mathcal{O}(N\log N)$
operations for an $N$-point surface mesh. Importantly, the proposed
method does not utilize previously-employed acceleration elements such
as the Fast Fourier transform (FFT), special-function expansions,
high-dimensional linear-algebra factorizations, translation operators,
equivalent sources, or parabolic
scaling~\cite{2007DirectionalFMMLexing,2014ButterflyLexing,1996ButterflyMichielssen,ROKHLIN1993,2006FMMRokhlin,2017Boerm,2017BoermH2,2001FFTKunyansky,1996AIMFFTBelszynski,
  1997FFTPhillips,2003FMMLexingKernelIndependent,2003BebendorfRjasanow}. Instead,
the IFGF method relies on straightforward interpolation of the
operator kernels---or, more precisely, of certain factored forms of
the kernels---, which, when collectively applied to larger and larger
groups of Green function sources, in a recursive fashion, gives rise
to the desired $\mathcal{O}(N\log N)$ accelerated evaluation. The IFGF
computing cost is competitive with that of other approaches, and, in a
notable advantage, the method runs on a minimal memory footprint.  For
example, as shown in Table~\ref{table:timings5} below, a 43-minute,
single-core run on a mere 10 GB of peak memory suffice to produce the
full discrete operator evaluation, with a relative error of
$1.5\times 10^{-2}$, for a problem 512 wavelengths in acoustic size.
In sharp contrast to other algorithms, finally, the IFGF method is
extremely simple, and it lends itself to straightforward
implementations and effective parallelization.

As alluded to above, the IFGF strategy is based on the interpolation
properties of a certain factored form of the scattering Green
function into a singular and rapidly-oscillatory {\em centered factor}
and a slowly-oscillatory {\em analytic factor}. Importantly, the
analytic factor is analytic up to and including infinity (which
enables interpolation over certain unbounded conical domains on the
basis of a finite number of radial interpolations nodes), and, when
utilized for interpolation of fields with sources contained within a
cubic box $B$ of side $H$, it enables uniform approximability over
semi-infinite cones, with apertures proportional to $1/H$. In
particular, unlike the FMM based approaches, the algorithm does not
require separate treatment of the low- and high-frequency regimes. On
the basis of these properties, the IFGF method orchestrates the
accelerated operator evaluation utilizing two separate tree-like
hierarchies which are combined in a single boxes-and-cones hierarchical data structure. Thus, starting from an initial cubic box
of side $H_1$ which contains all source and observation points
considered, the algorithm utilizes, like other approaches, the octree
$\mathcal{B}$ of boxes that is obtained by partitioning the initial
box into eight identical child boxes of side $H_2 = H_1/2$ and iteratively
repeating the process with each resulting child box until the
resulting boxes are sufficiently small.

Along with the octree of boxes, the IFGF algorithm incorporates a
hierarchy $\mathcal{C}$ of {\em cone segments}, which are used to
enact the required interpolation procedures. Each box in the tree
$\mathcal{B}$ is thus endowed with a set of box-centered cone segments
at a corresponding level of the cone hierarchy $\mathcal{C}$. In
detail, a set of box-centered cone segments of extent $\Delta_{s,d}$
in the analytic radial variable $s$, and angular apertures
$\Delta_{\theta,d}$ and $\Delta_{\varphi,d}$ in each of the two
spherical angular coordinates $\theta$ and $\varphi$, are used for
each $d$-level box $B$. (Roughly speaking, $\Delta_{s,d}$,
$\Delta_{\theta,d}$ and $\Delta_{\varphi,d}$ vary in an inversely
proportional manner with the box size $H_d$ for large enough boxes,
but they remain constant for small boxes; full details are presented
in Section~\ref{sec:algnotation}.)  The set of cone segments centered
at a box $B$ is used by the IFGF algorithm to set up an interpolation
scheme over all of space around $B$, except for the region occupied by
the union of $B$ itself and all of its nearest neighboring boxes at
the same level. Thus, the leaves (level $D$) in the box tree, that is,
the cubes of the smallest size used, are endowed with cone segments of
largest angular and radial spans $\Delta_{s,D}$, $\Delta_{\theta,D}$
and $\Delta_{\varphi,D}$ considered. Each ascent $d\to (d-1)$ by one
level in the box tree $\mathcal{B}$ (leading to an increase by a
factor of two in the cube side $H_{d-1} = 2 H_d$) is accompanied by a
corresponding descent by one level (also $d\to (d-1)$) in the cone
hierarchy $\mathcal{C}$ (leading, e.g., for large boxes, to a decrease
by a factor of one-half in the radial and angular cone spans:
$\Delta_{s,{d-1}}=\frac 12\Delta_{s,d}$,
$\Delta_{\theta,{d-1}}=\frac 12\Delta_{\theta,d}$ and
$\Delta_{\varphi,{d-1}}=\frac 12\Delta_{\varphi,d}$; see
Section~\ref{sec:algnotation}). In view of the interpolation
properties of the analytic factor, the interpolation error and cost
per point resulting from this conical interpolation setup remains
unchanged from one level to the next as the box tree is traversed
towards its root level $d=1$. The situation is even more favorable in
the small-box case. And, owing to analyticity at infinity,
interpolation for arbitrarily far regions within each cone segment can
be achieved on the basis of a finite amount of interpolation data. In
all, this strategy reduces the computational cost, by commingling the
effect of large numbers of sources into a small number of
interpolation parameters. A recursive strategy, in which cone segment
interpolation data at level $d$ is also exploited to obtain the
corresponding cone-segment interpolation data at level $(d-1)$,
finally, yields the optimal $\mathcal{O}(N\log N)$ approach.
		 
The properties of the factored Green function, which underlie the
proposed IFGF algorithm, additionally provide certain perspectives
concerning various algorithmic components of other acceleration
approaches. In particular, the analyticity properties of the analytic factor, which are established in Theorem~\ref{theorem:Error}, in
conjunction with the classical polynomial interpolation bound
presented in Theorem~\ref{theorem:errorestimatenested}, and the IFGF
spherical-coordinate interpolation strategy, clearly imply the
property of low-rank approximability which underlies some of the ideas
associated with the butterfly~\cite{1996ButterflyMichielssen,
  2014ButterflyLexing, Cands2009FastButterfly} and directional FMM
methods~\cite{2007DirectionalFMMLexing}. The directional FMM approach,
further, relies on a ``directional factorization'' which, in the
context of the present interpolation-based viewpoint, can be
interpreted as facilitating interpolation. For the directional
factorization to produce beneficial effects it is necessary for the
differences of source and observation points to lie on a line
asymptotically parallel to the vector between the centers of the
source and target boxes. This requirement is satisfied in the
directional FMM approach through its ``parabolic-scaling'', according
to which the distance to the observation set is required to be the
square of the size of the source box. The IFGF factorization is not directional, however, and it does not require use of the parabolic
scaling: the IFGF approach interpolates analytic-factor contributions
at linearly-growing distances from the source box.

In a related context we mention the recently introduced
approach~\cite{2017BoermH2}, which incorporates in an
${\mathcal H}^2$-matrix setting some of the main ideas associated with
the directional FMM algorithm~\cite{2007DirectionalFMMLexing}. Like
the IFGF method, the approach  relies on interpolation of a
factored form of the Green function---but using the directional
factorization instead of the IFGF factorization. The method yields a
full LU decomposition of the discrete integral operator, but it does so
under significant computing costs and memory requirements, both for
pre-computation, and per individual solution.

It is also useful to compare the IFGF approach to other acceleration
methods from a purely algorithmic point of view. The FMM-based
approaches~\cite{2007DirectionalFMMLexing, 2017Boerm, 2006FMMRokhlin,
  2012FMMChebyshevMessnerSchanz} {\color{MyGreen} entail two passes over
  the three-dimensional acceleration tree, one in the upward
  direction, the other one downward. In the upward pass of the
  original FMM methods, for example, the algorithm commingles
  contributions from larger and larger numbers of sources via
  correspondingly growing spherical harmonics expansions, which are
  sequentially translated to certain spherical coordinate systems and
  then recombined, as the algorithm progresses up the tree via
  application of a sequence of so-called M2M translation operators
  (see e.g. \cite{2007DirectionalFMMLexing}). In the downward FMM
  pass, the algorithm then re-translates and localizes the
  spherical-harmonic expansions to smaller and smaller boxes via
  related M2L and L2L translation operators
  (e.g. \cite{2007DirectionalFMMLexing}). The algorithm is finally
  completed by evaluation of surface point values at the end of the
  downward pass. } The IFGF algorithm, in contrast, progresses
simultaneously along two tree-like structures, the box tree and the
cone interpolation hierarchy, and it produces evaluations at the
required observation points, via interpolation, at all stages of the
acceleration process (but only in a neighborhood of each source box at
each stage). In particular, the IFGF method does not utilize
high-order expansions of the kinds used in other acceleration
methods---and, thus, it avoids use of Fast Fourier Transforms (FFTs)
which are almost invariably utilized in the FMM to manipulate the
necessary spherical harmonics expansions.
(Reference~\cite[Sec. 7]{Gumerov2004} mentions two alternatives which,
however, it discards as less efficient than an FFT-based procedure.)
The use of FFTs presents significant challenges, however, in the
context of distributed memory parallel computer systems. In this
regard reference~\cite{2007DirectionalFMMLexing} (further referencing
~\cite{2003FMMLexingKernelIndependent}), for example, indicates ``the
top part of the [FMM] octree is a bottleneck'' for parallelization,
and notes that, in view of the required parabolic scaling, the
difficulty is not as marked for the directional FMM approach proposed
in that contribution. In~\cite{ParallelFMMChandramowlishwaran2010} the
part of the FMM relying on FFTs is identified to become a bottleneck
in the parallelization and it is stated that this difficulty occurs as
the FFT-based portion of the algorithm is subject to the ``lowest
arithmetic intensity'' and is therefore ``likely suffering from
bandwidth contention''.

The IFGF algorithm, which relies on interpolation by means of Chebyshev expansions of relatively low degree, does not require the
use of FFTs---a fact that, as suggested above, provides significant
benefits in the distributed memory context. As a counterpoint,
however, the low degree Chebyshev approximations used by the IFGF
method do not yield the spectral accuracy resulting from the
high-order expansions used by other methods. A version of the IFGF
method which enjoys spectral accuracy could be obtained simply by
replacing its use of low-order Chebyshev interpolation by Chebyshev
interpolation of higher and higher orders on cone segments of fixed
size as the hierarchies are traversed toward the root $d=1$. Such a
direct approach, however, entails a computing cost which increases
quadratically as the Chebyshev expansion order grows---thus degrading
the optimal complexity of the IFGF method. But the needed evaluation
of high-order Chebyshev expansions on arbitrary three-dimensional
grids can be performed by means of FFT-based interpolation methods
similar to those utilized in~\cite[Sec. 3.1]{2001FFTKunyansky}
and~\cite[Remark 7]{BRUNOLintner2013}. This approach, which is not
pursued in this paper, leads to a spectrally convergent version of the
method, which still runs on essentially linear computing time and
memory. But, as it reverts to use of FFTs, the strategy re-introduces
the aforementioned disadvantages concerning parallelization, which are
avoided in the proposed IFGF approach.

It is also relevant to contrast the algorithmic aspects in the IFGF
approach to those used in the butterfly
approaches~\cite{2014ButterflyLexing ,1996ButterflyMichielssen,
  Cands2009FastButterfly}. Unlike the interpolation-based IFGF, which
does not rely on use of linear-algebra factorizations, the butterfly
approaches are based on low rank factorizations of various
high-dimensional sub-matrices of the overall system matrix. Certain
recent versions of the butterfly methods reduce linear-algebra
computational cost by means of an interpolation process in
high-dimensional space in a process which can easily be justified on
the basis of the analytic properties of the factored Green function
described in Section~\ref{subsec:analyticity}. As in the IFGF
approach, further, the data structure inherent in the butterfly
approach~\cite{2014ButterflyLexing, 1996ButterflyMichielssen} is
organized on the basis of two separate tree structures that are
traversed in opposite directions, one ascending and the other
descending, as the algorithm progresses. In the
method~\cite{2014ButterflyLexing} the source and observation cubes are
paired in such a way that the product their sizes remains
constant---which evokes the IFGF's cone-and-box sizing condition,
according to which the angles scale inversely with the cone span
angles. These two selection criteria are indeed related, as the
interpolability by polynomials used in the IFGF approach has direct
implications on the rank of the interpolated values. But, in a
significant distinction, the IFGF method can be applied to a wide
range of scattering kernels, including the Maxwell, Helmholtz, Laplace
and elasticity kernels among others, and including smooth as well as
non-smooth kernels. The butterfly
approaches~\cite{Cands2009FastButterfly,2014ButterflyLexing}, in
contrast, only apply to Fourier integral operators with smooth
kernels. The earlier butterfly
contribution~\cite{1996ButterflyMichielssen} does apply to Maxwell
problems, but its accuracy, specially in the low-frequency
near-singular interaction regime, has not been studied in detail.


Whereas no discussion concerning parallel implementation of the IFGF
approach is presented in this paper, we note that, not relying on
FFTs, the approach is not subject to the challenging FFT communication
requirements inherent in all of the aforementioned
Maxwell/Helmholtz/Laplace algorithms. In fact,
experience in the case of the butterfly method~\cite{2014ButterflyLexing} for
non-singular kernels, whose data structure is, as mentioned above,
similar to the one utilized in the IFGF method, suggests that
efficient parallelization to large numbers of processors may hold for
the IFGF algorithm as well. In \cite{2014ButterflyLexing} this was achieved due to ``careful manipulation of bitwise-partitions of the product space of the source and target domains'' to ``keep the data (...) and the computation (...) evenly distributed''.

{\color{red}
This paper is organized as follows: after preliminaries are briefly
considered in Section~\ref{sec:preliminaries}, Section~\ref{sec:ifgf}
presents the details of the IFGF algorithm---including, in
Sections~\ref{subsec:analyticity} and~\ref{subsec:interpolation}, a
theoretical discussion of the analyticity and interpolation properties
of the analytic factor, and then, in Section~\ref{subsec:algorithm},
the algorithm itself. The numerical results presented in
Section~\ref{sec:examples} demonstrate the efficiency of the IFGF
algorithm in terms of memory and computing costs by means of several numerical experiments performed on different geometries with acoustic size up to 512 wavelengths. A few concluding
comments, finally, are presented in Section~\ref{sec:conclusions}.}


\section{Preliminaries and Notation} \label{sec:preliminaries} We consider discrete integral operators of the form
\begin{equation} \label{eq:field1}
    I(x_\ell) \coloneqq \sum \limits_{\substack{m = 1 \\ m \neq \ell}}^N a_m G(x_\ell, x_m) ,\quad \ell = 1, \ldots, N,
\end{equation}
on a two-dimensional surface $\Gamma\subset \mathbb{R}^3$, where $N$
denotes a given positive integer, and where, for $m = 1, \ldots, N$,
$x_m\in\Gamma$ and $a_m\in\mathbb{C}$ denote pairwise different points
and given complex numbers, respectively; the set of all $N$ surface
discretization points, in turn, is denoted by
$\Gamma_N \coloneqq \{x_1, \ldots, x_N\}$. For definiteness,
throughout this paper we focus mostly on the challenging
three-dimensional Helmholtz Green function case,
\begin{equation} \label{eq:greensfunction}
G(x, x')  = \frac{e^{\imath \kappa |x - x'|}}{4 \pi |x - x'|},
\end{equation}
where $\imath$, $\kappa$ and $|\cdot |$ denote the {\em imaginary unit}, the {\em wavenumber} and the {\em Euclidean norm} in $\mathbb{R}^3$, respectively. Discrete operators of the form~\eqref{eq:field1}, with various kernels $G$, play major roles in a wide range of areas in science and engineering, with applications to acoustic and electromagnetic scattering by surfaces and volumetric domains in two- and three-dimensional space, potential theory, fluid flow, etc. As illustrated in Section~\ref{subsec:interpolation} for the Laplace kernel
$G(x, x') = 1/|x - x'|$, the proposed acceleration methodology
applies, with minimal variations, to a wide range of smooth and
non-smooth kernels---including but not limited to, e.g. the Laplace, Stokes and
elasticity kernels, and even kernels of the form $G(x, x') = \exp{\left(\imath \varphi(x - x')\right)}$ for smooth functions $\varphi$. The restriction to surface problems, where the point sources lie on a two dimensional surface $\Gamma$ in three dimensional space, is similarly adopted for definiteness: the extension of the method to volumetric source distributions is straightforward and should prove equally effective.

Clearly, a direct evaluation of $I(x)$ for all $x \in \Gamma_N$
requires $\mathcal{O}(N^2)$ operations. This quadratic algorithmic
complexity makes a direct operator evaluation unfeasible for many
problems of practical interest. In order to accelerate the
evaluation, the proposed IFGF method partitions the surface points
$\Gamma_N$ by means of a hierarchical tree structure of {\color{MyGreen}{\em boxes}}, as
described in Section~\ref{subsec:algorithm}. The evaluation of the
operator \eqref{eq:field1} is then performed on basis of a small
number of pairwise box interactions, which may occur either
horizontally in the tree structure, between two nearby equi-sized
boxes, or vertically between a child box and a neighboring
parent-level box. As shown in Section~\ref{sec:ifgf}, the box
interactions can be significantly accelerated by means of a certain
interpolation strategy that is a centerpiece in the IFGF approach. The
aforementioned box tree, together with an associated cone structure,
are described in detail in Section~\ref{subsec:algorithm}.

To conclude this section we introduce the box, source-point and
target-point notations we use in what follows. To do this, for given
$H > 0$ and $x = ((x)_1, (x)_2, (x)_3)^T \in \mathbb{R}^3$ we define
the {\em axis aligned} box $B(x, H)$ of {\em box side} $H$ and
centered at $x$ as
\begin{equation} \label{eq:defbox} B(x, H) \coloneqq \left[ (x)_1 -
    \frac{H}{2}, (x)_1 +\frac{H}{2} \right) \times \left[ (x)_2 -
    \frac{H}{2}, (x)_2 +\frac{H}{2} \right) \times \left[ (x)_3 -
    \frac{H}{2}, (x)_3 +\frac{H}{2} \right);
\end{equation}
see Figure~\ref{fig:Boxes}. For a given {\color{MyGreen} {\em source box}} $B(x_S, H)$ of
side $H$ and centered at a given point
$x_S = \left( (x_S)_1, (x_S)_2, (x_S)_3 \right)^T \in \mathbb{R}^3$,
we use the enumeration
$x_1^S, \ldots, x_{N_S}^S \in B(x_S, H) \cap \Gamma_N$ ($N_S\leq N$
and, possibly, $N_S=0$) of all source points $x_m$, $m=1,\dots,N$,
which are contained in $B(x_S, H)$; the corresponding source
coefficients $a_m$ are denoted by $a_\ell^S \in \{a_1, \ldots, a_N\}$,
$\ell = 1, \ldots, N_S$. A given set of $N_T$ surface target points,
at arbitrary positions outside $B(x_S, H)$, are denoted by
$x_1^T, \ldots, x_{N_T}^T \in \Gamma_N \setminus B(x_S, H)$. Then,
letting $I_S = I_S(x)$ denote the field generated at a point $x$ by
all point sources contained in $B(x_S, H)$, we will consider, in
particular, the problem of evaluation of the local operator
\begin{equation} \label{eq:fieldboxes} I_S(x_\ell^T) \coloneqq \sum \limits_{m
    = 1}^{N_S} a_m^S G(x_\ell^T, x_m^S), \qquad \ell = 1, \ldots, N_T.
\end{equation}
A sketch of this setup is presented in Figure~\ref{fig:Boxes}.

\begin{figure}
    \centering
    \includegraphics[width=0.7\textwidth]{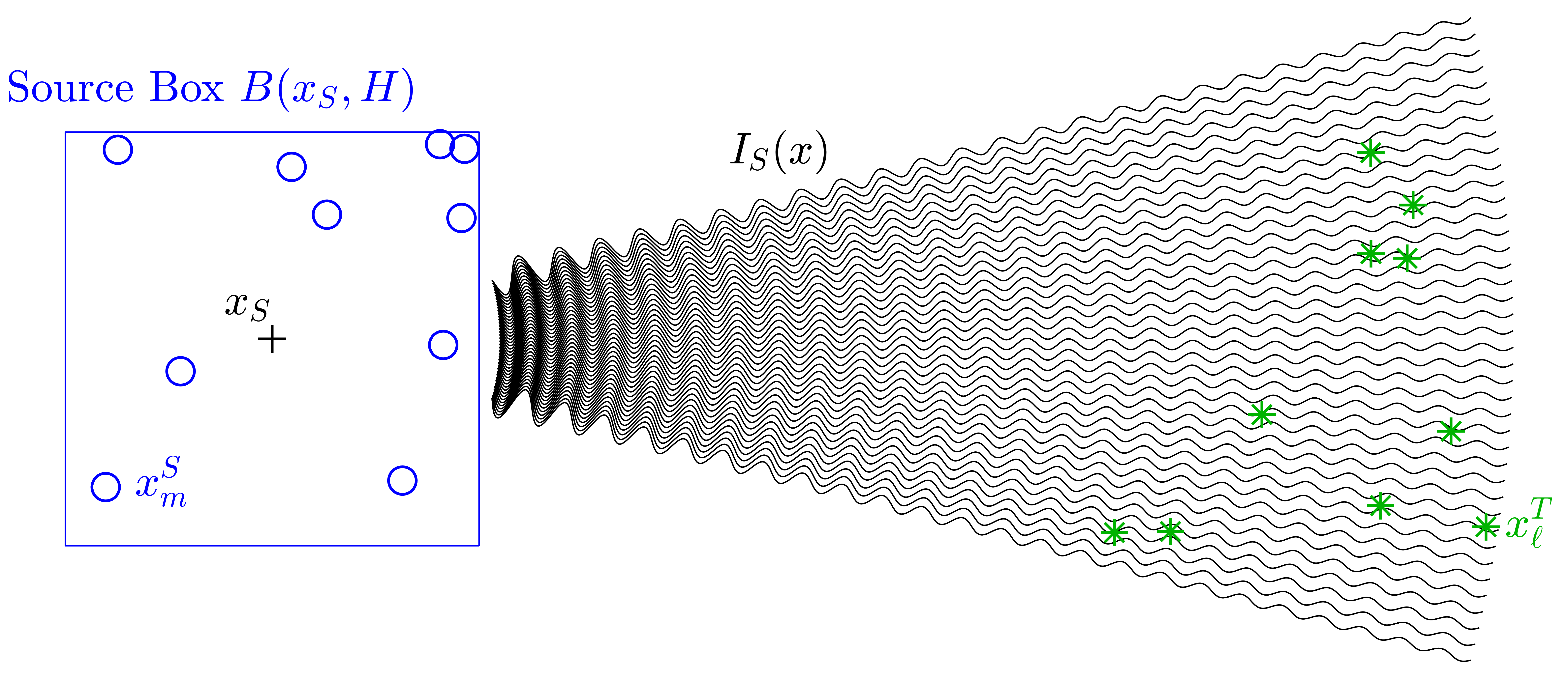}
    \caption{Two-dimensional illustration of a source box $B(x_S, H)$
      containing source points $x_1^S, x_2^S, x_3^S, \ldots$ (blue
      circles) and target points $x_1^T, x_2^T, x_3^T, \ldots$ (green
      stars). The black wavy lines represent the field $I_S$ generated
      by the point sources in $B(x_S, H)$.}
    \label{fig:Boxes}
  \end{figure}
  

\section{\label{sec:ifgf}The IFGF Method}
To achieve the desired acceleration of the discrete operator~\eqref{eq:field1}, the IFGF approach utilizes a certain factorization of the Green function $G$ which leads to efficient
evaluation of the field $I_S$ in equation~\eqref{eq:fieldboxes} by
means of numerical methods based on polynomial interpolation.

The IFGF factorization for $x'$ in the box $B(x_S, H)$ (centered at $x_S 
$) takes the form
\begin{equation}\label{eq:factor}
G(x, x') = G(x,x_S) g_S(x, x').
\end{equation}
Throughout this paper the functions $G(x,x_S)$ and $g_S$ are  called the {\em centered factor} and the {\em analytic factor},
respectively. Clearly, for a fixed given center $x_S$ the centered factor depends only on $x$: it is independent of $x'$. As shown in Section~\ref{subsec:analyticity}, in turn, the analytic factor is {\em analytic up to and including infinity} in the $x$ variable for each fixed value of $x'$ (which, in particular, makes $g_S(x, x')$ slowly oscillatory and asymptotically constant as a function of $x$ as $|x|\to\infty$), with oscillations as a function of $x$ that, for $x'\in B(x_S, H)$, increase linearly with the box size $H$.

Using the factorization~\eqref{eq:factor} the field $I_S$ generated by point sources placed within the source box $B(x_S, H)$ at any point
$x \in \mathbb{R}^3$ may be expressed in the form
\begin{equation}\label{eq:definitionF}
  I_S(x) = \sum \limits_{m = 1}^{N_S} a_m^S G(x, x_m^S) 
  = G(x, x_S)  F_S(x) \quad\mbox{where}\quad F_S(x) = \sum \limits_{m = 1}^{N_S} a_m^S g_S(x, x_m^S).
\end{equation}
The desired IFGF accelerated evaluation of the
operator~\eqref{eq:fieldboxes} is achieved via interpolation of the
function $F_S(x)$, which, as a linear combination of analytic factors,
is itself analytic at infinity. The singular and oscillatory character
of the function $F_S$, which determine the cost required for its
accurate interpolation, can be characterized in terms of the analytic
properties, mentioned above, of the factor $g_S$. A study of these
analytic and interpolation properties is presented in
Sections~\ref{subsec:analyticity} and \ref{subsec:interpolation}.

On the basis of the aforementioned analytic properties the algorithm evaluates all the sums in equation~\eqref{eq:fieldboxes} by first obtaining values of the function $F_S$ at a small number $P \in \mathbb{N}$ of points $p_i \in \mathbb{R}^3$, $i = 1, \ldots, P$, from which the necessary $I_S$ values (at all the target points $x_1^T, \ldots, x_{N_T}^T$) are rapidly and accurately obtained by interpolation. At a cost of $\mathcal{O}(P N_S + P N_T)$ operations, the interpolation-based algorithm yields useful acceleration provided $P \ll \min \{N_S, N_T\}$. Section \ref{subsec:algorithm} shows that adequate utilization of these elementary ideas leads to a multi-level algorithm which applies the forward map~\eqref{eq:field1}
for general surfaces at a total cost of $\mathcal{O}(N \log N)$ operations. The algorithm (which is very simple indeed) and a study of its computational cost are presented in
Section~\ref{subsec:algorithm}. 

In order to proceed with this program we introduce certain notations
and conventions. On one hand, for notational simplicity, but without
loss of generality, throughout the remainder of this section we assume
$x_S = 0$; the extension to the general $x_S \neq 0$ case is, of
course, straightforward. Incorporating the convention $x_S = 0$, then,
we additionally consider, for $0<\eta< 1$, the sets
\begin{equation*}
  A_\eta \coloneqq \{(x,x')\in\mathbb{R}^3\times \mathbb{R}^3\, : \, |x'| \leq \eta |x|\}
\end{equation*}
and
\begin{equation} \label{eq:defAdeltaH}
  A_{\eta}^H \coloneqq A_{\eta}\cap \left( \mathbb{R}^3\times B(x_S, H) \right).
\end{equation}
Clearly, $A_{\eta}^H$ is the subset of pairs in $A_\eta$ such that
$x'$ is restricted to a particular source box $B(x_S,
H)$. Theorem~\ref{theorem:Error} below implies that, on the basis of
an appropriate change of variables which adequately accounts for the
analyticity of the function $g_S$ up to and including infinity, this
function can be accurately evaluated for $(x,x')\in A_{\eta}^H$ by
means of a straightforward interpolation rule based on an
interpolation mesh in spherical coordinates which is very sparse along
the radial direction.

\subsection{\label{subsec:analyticity}Analyticity}

As indicated above, the analytic properties of the factor $g_S$ play a pivotal role in the proposed algorithm. Under the $x_S = 0$ convention established above, the factors in equation~\eqref{eq:factor} become
\begin{equation} \label{eq:factored_f} G(x, 0) =
  \frac{e^{\imath \kappa |x|}}{4 \pi |x|}\quad \mbox{and}\quad g_S(x, x') = \frac{|x|}{|x - x'|} e^{\imath \kappa \left( |x-x'| - |x|\right)}.
\end{equation}

In order to analyze the properties of the factor $g_S$ we introduce
the spherical coordinate parametrization 
\begin{equation} \label{eq:defparametrizationr} \tilde
  {\mathbf x}(r, \theta, \varphi) \coloneqq \begin{pmatrix} r\sin
    \theta \cos \varphi \\ r\sin \theta \sin \varphi \\ r\cos
    \theta\end{pmatrix}, \qquad 0\leq r< \infty, \mkern5mu 0\leq
  \theta \leq \pi, \mkern5mu 0\leq \varphi < 2\pi,
\end{equation}
and note that~\eqref{eq:factored_f} may be re-expressed in the form
\begin{equation}\label{factor_r}
  g_S(x, x') = \frac{1}{4 \pi \left | \frac{x}{r} - \frac{x'}{r}\right|}  \exp \left({\imath \kappa r\left( \left | \frac{x}{r}-\frac{x'}{r}\right| - 1\right)} \right).
\end{equation}
The effectiveness of the proposed factorization is illustrated in
Figures \ref{fig:radialsetup1_setup},
\ref{fig:radialsetup1_unfactored}, and
\ref{fig:radialsetup1_factored}, where the oscillatory character of
the analytic factor $g_S$ and the Green function
\eqref{eq:greensfunction} without factorization are compared, as a
function of $r$, for several wavenumbers. The slowly-oscillatory
character of the factor $g_S$, even for acoustically large source
boxes $B(x_S, H)$ as large as twenty wavelengths $\lambda$
($H = 20 \lambda$) and starting as close as just $3 H/2$ away from the
center of the source box, is clearly visible in Figure~\ref{fig:radialsetup1_factored}; much
faster oscillations are observed in Figure
\ref{fig:radialsetup1_unfactored}, even for source boxes as small as
two wavelengths in size ($H = 2 \lambda$). Only the real part is
depicted in Figures \ref{fig:radialsetup1_setup},
\ref{fig:radialsetup1_unfactored}, and \ref{fig:radialsetup1_factored}
but, clearly, the imaginary part displays the same behavior.
\begin{figure}
\centering
\begin{subfigure} {0.85\textwidth}
    \centering
    \includegraphics[width=0.9\textwidth]{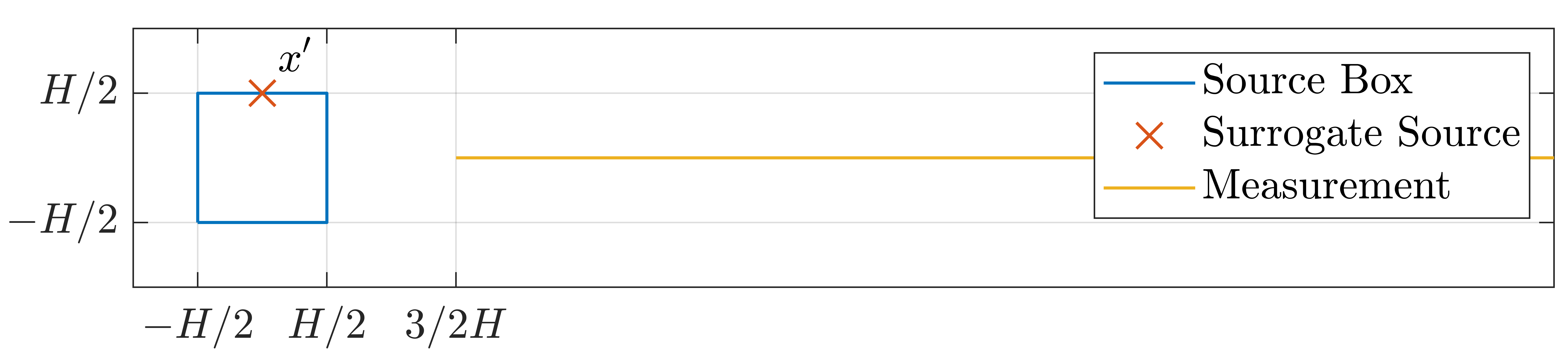}
    \caption{Test setup. The Surrogate Source position $x'$ gives rise
      to the fastest possible oscillations along the Measurement line,
      among all possible source positions within the Source Box.}
    \label{fig:radialsetup1_setup}
\end{subfigure}
\begin{subfigure} {0.85\textwidth}
    \centering 
    \includegraphics[width=1\textwidth]{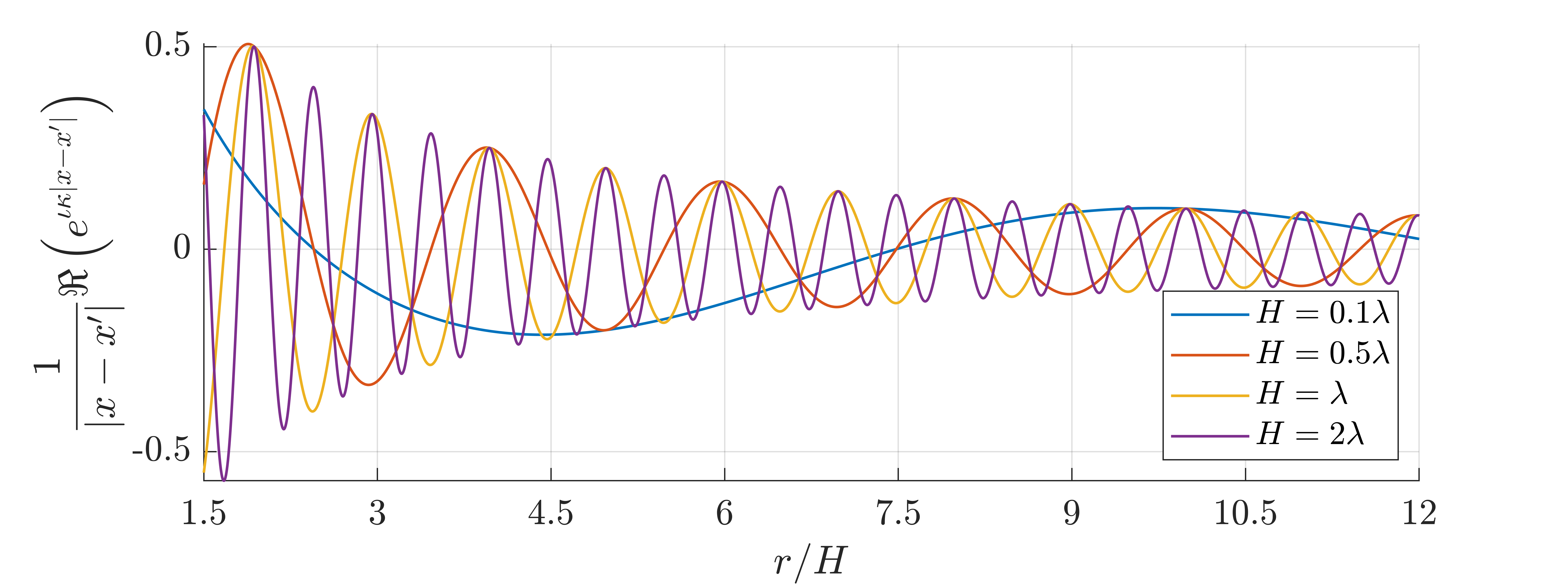}
    \caption{Real part of the Green function $G$ in
      equation~\eqref{eq:greensfunction} (without factorization),
      along the Measurement line depicted in
      Figure~\ref{fig:radialsetup1_setup}, for boxes of various
      acoustic sizes $H$.}
    \label{fig:radialsetup1_unfactored}
\end{subfigure}
\begin{subfigure} {0.85\textwidth}
    \centering 
    \includegraphics[width=\textwidth]{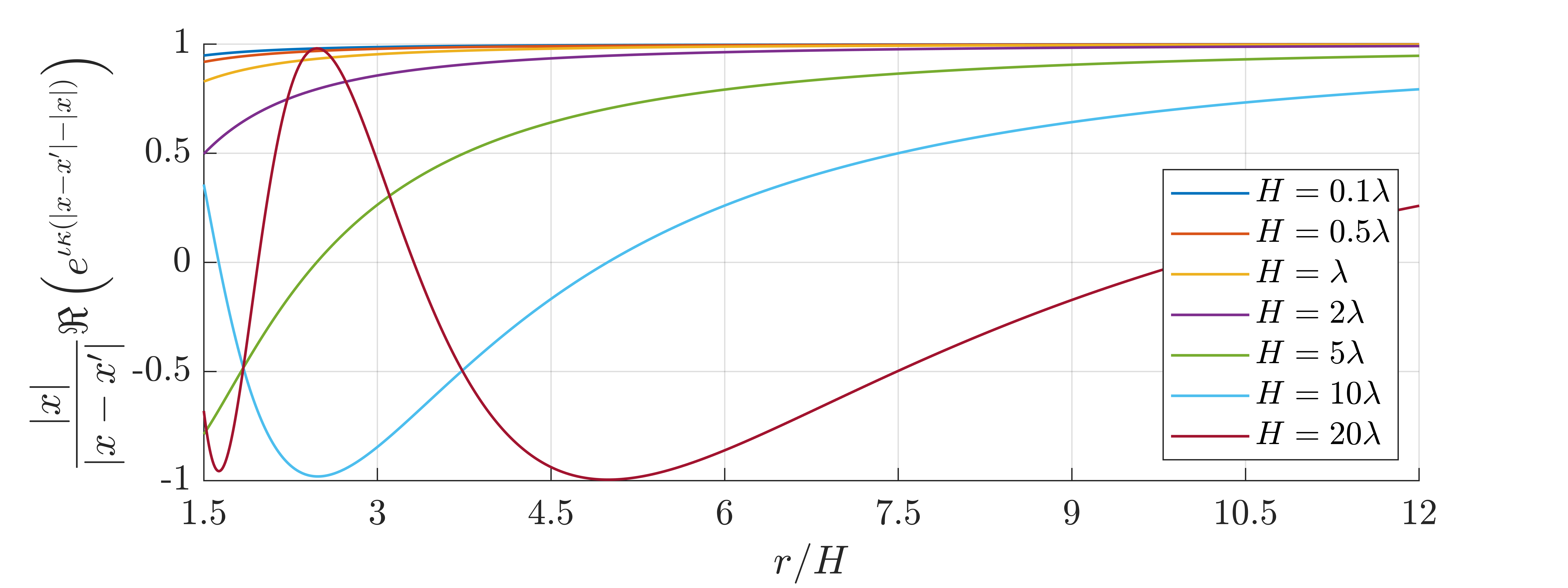}
    \caption{Real part of the analytic factor $g_S$
      (equation~\eqref{eq:factored_f}) along the Measurement line
      depicted in Figure~\ref{fig:radialsetup1_setup}, for boxes of
      various acoustic sizes $H$. }
    \label{fig:radialsetup1_factored}
\end{subfigure}
\caption{Surrogate Source factorization test, set up as illustrated in
  Figure~\ref{fig:radialsetup1_setup}. Figure~\ref{fig:radialsetup1_factored}
  shows that the analytic factor $g_S$ oscillates much more slowly,
  even for $H=20\lambda$, than the unfactored Green function does for
  the much smaller values of $H$ considered in
  Figure~\ref{fig:radialsetup1_unfactored}. }
\end{figure}
While the oscillations of the smooth factor $g_S$ and the unfactored
Green function are asymptotically the same for an increasing acoustic
size of the source box ($\kappa H \to \infty$, cf. Theorem
\ref{theorem:Error}), a strategy based on direct interpolation of the
Green function without factorization of the complex exponential term
would require several orders of magnitudes more interpolation points
and proportional computational effort. While allowing that the cost of
such an approach may be prohibitive, it is interesting to note that,
asymptotically, the cost would still be of the order of
$\mathcal{O}(N\log N)$ operations.

In addition to the factorization~\eqref{eq:definitionF}, the proposed
strategy relies on use of the singularity resolving
change of variables 
\begin{equation}\label{eq:defparametrizationins} 
{\color{red} s \coloneqq \frac{h}{r}, \qquad 
{\mathbf x}(s, \theta, \varphi)  \coloneqq  \tilde {\mathbf x}(r, \theta, \varphi),}
\end{equation}
where, once again, $r = |x|$ denotes the radius in spherical
coordinates and where $h$ denotes the radius of the source box---which
is related to the box size $H$ by
\begin{equation} \label{eq:def_eps}
    h \coloneqq \max \limits_{x \in B(x_S, H)} |x| = \frac{\sqrt{3}}{2}H.
\end{equation}
Using these notations equation~\eqref{factor_r} may be re-expressed in
the form
\begin{equation}\label{factor_r_eps}
  g_S(x, x') = \frac{1}{4 \pi \left |\frac{x}{r} - \frac{x'}{h}s\right|}  \exp\left({\imath \kappa r\left( \left |\frac{x}{r}-\frac{x'}{h} s\right| - 1\right)}\right).
\end{equation}
Note that while the source point $x$ and its norm $r$ depend on $s$, the quantity $x/r$ is independent of $r$ and therefore also of $s$.

The introduction of the variable $s$ gives rise to several algorithmic
advantages, all of which stem from the analyticity properties of the
function $g_S$---as presented in Lemma~\ref{lem:analyticity} below and
Theorem~\ref{theorem:Error} in Section~\ref{subsec:interpolation}.
Briefly, these results establish that, for any fixed values $H>0$ and
$\eta$ satisfying $0<\eta<1$, the function $g_S$ is analytic for
$(x,x')\in A_{\eta}^H$, with $x$-derivatives that are bounded up to
and including $|x|=\infty$. As a result (as shown in Section
\ref{subsec:interpolation}) the $s$ change of variables translates the
problem of interpolation of $g_S$ over an infinite $r$ interval into a
problem of interpolation of an analytic function of the variable $s$
over a compact interval in the $s$ variable.

The relevant $H$-dependent analyticity domains for the function $g_S$
for each fixed value of $H$ are described in the following lemma.
\begin{lemma} \label{lem:analyticity}
  Let $x'\in B(x_S, H)$ and let
  $x_0 = \tilde{\mathbf{x}}(r_0, \theta_0,\varphi_0) = \mathbf{x}(s_0,
  \theta_0,\varphi_0)$ ($s_0 = h/r_0$) be such that
  $(x_0,x')\in A_{\eta}^H$. Then $g_S$ is an analytic
  function of $x$ around $x_0$ and also an analytic function of
  $(s, \theta,\varphi)$ around $(s_0, \theta_0,\varphi_0)$. Further, the
  function $g_S$ is an analytic function of
  $(s, \theta,\varphi)$ (resp. $(r, \theta,\varphi)$) for
  $0\leq \theta \leq \pi$, $0\leq \varphi< 2\pi$, and for $s$ in a
  neighborhood of $s_0=0$ (resp. for $r$ in a neighborhood of
  $r_0=\infty$, including $r=r_0 = \infty$).

\end{lemma}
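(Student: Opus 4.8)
The plan is to reduce everything to the analyticity of the elementary building blocks of $g_S$ and then track how the change of variables $s = h/r$ interacts with them. Starting from the explicit formula~\eqref{factor_r_eps}, observe that $g_S$ is built, by composition, multiplication, division and exponentiation, from the scalar quantity
\begin{equation*}
  \rho(x,x') \coloneqq \left| \frac{x}{r} - \frac{x'}{h}\, s \right|,
\end{equation*}
together with the factor $e^{\imath\kappa r(\rho - 1)} = e^{\imath\kappa h(\rho-1)/s}$. Since $\exp$, reciprocal, and the square root (away from $0$) are analytic, and since $x/r$ is a real-analytic function of $(\theta,\varphi)$ that does not depend on $r$ or $s$, the only two things that need care are (i) that $\rho$ stays bounded away from $0$ on $A_\eta^H$, so that $1/\rho$ and $\sqrt{\,\cdot\,}$ applied to $\rho^2$ are analytic there; and (ii) that the exponent $\kappa r(\rho-1)$, which naively blows up as $r\to\infty$ (i.e. $s\to 0$), is in fact analytic at $s=0$ because $\rho - 1$ vanishes to first order in $s$.

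First I would establish (i). For $(x,x')\in A_\eta^H$ we have $|x'|\le\eta|x|$ and $|x'|\le h$, hence $|x'/h|\,s = |x'|/r \le \eta$, so by the reverse triangle inequality $\rho \ge 1 - |x'|/r \ge 1-\eta > 0$. Thus $\rho^2 = 1 - 2\langle x/r, x'/h\rangle s + |x'/h|^2 s^2$ is a polynomial in $s$ (with coefficients that are real-analytic in $(\theta,\varphi)$ and depend on the fixed $x'$) taking values in a region where $z\mapsto\sqrt z$ is analytic; composing gives analyticity of $\rho$, and hence of $1/\rho$, in a complex neighborhood of each point of $A_\eta^H$ — in particular for $s$ near $0$, uniformly in $\theta,\varphi$. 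Then I would handle (ii): write $\rho - 1 = \dfrac{\rho^2 - 1}{\rho + 1} = \dfrac{-2\langle x/r, x'/h\rangle s + |x'/h|^2 s^2}{\rho + 1}$, which manifestly has a factor of $s$. Therefore $\kappa r(\rho-1) = \kappa h\,\dfrac{\rho-1}{s} = \kappa h\,\dfrac{-2\langle x/r, x'/h\rangle + |x'/h|^2 s}{\rho+1}$ is a ratio of analytic functions with non-vanishing denominator ($\rho+1\ge 2-\eta$), hence analytic in $s$ near $0$; composing with $\exp$ keeps analyticity. Multiplying the two analytic pieces $\tfrac{1}{4\pi\rho}$ and $e^{\imath\kappa h(\rho-1)/s}$ yields analyticity of $g_S$ in $(s,\theta,\varphi)$ near any $(s_0,\theta_0,\varphi_0)$ with $s_0\ge 0$, including $s_0 = 0$, which is exactly the statement about $r$ in a neighborhood of $\infty$. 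Analyticity as a function of $x$ around $x_0$ (for $x_0$ finite, i.e. $s_0>0$) follows from the same formulas read in Cartesian coordinates, since there $r = |x|$ is analytic in $x$ away from the origin and $\rho$ is bounded away from $0$.

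The one genuinely delicate point — the "main obstacle" — is the behavior at $s_0 = 0$ / $r_0 = \infty$: one must make sure the apparent singularity of the exponent is removable and that the chosen square-root branch (determined by $\rho>0$ for real arguments) extends holomorphically through $s=0$ without a branch cut, which is what the bound $\rho\ge 1-\eta$ guarantees by keeping $\rho^2$ away from the negative real axis and from $0$. A secondary bookkeeping issue is the coordinate degeneracy of spherical coordinates at $\theta = 0,\pi$; but since the final objects $x/r$ and $\rho$ are smooth functions of $x/r\in S^2$ and the lemma only asserts analyticity in $(s,\theta,\varphi)$ on the parameter rectangle, this is handled by noting that $(\theta,\varphi)\mapsto x/r$ is real-analytic into $\mathbb R^3$ on the open parameter set and by direct inspection at the poles, or simply by phrasing analyticity through the Cartesian representation near those points. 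Assembling these observations in the order (boundedness of $\rho$ from below) $\Rightarrow$ (analyticity of $1/\rho$ and of the $s$-extended exponent) $\Rightarrow$ (analyticity of the product) gives the lemma.
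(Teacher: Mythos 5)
Your proof is correct and follows the same basic route as the paper's: inspect the explicit formula \eqref{factor_r_eps} and observe that the quantity $\rho=\left|\frac{x}{r}-\frac{x'}{h}s\right|$ is bounded below by $1-\eta>0$ on $A_\eta^H$, so that all the elementary operations building $g_S$ preserve analyticity. The one place where you go beyond the paper's (very terse) argument is the point $s_0=0$: the paper simply asserts that analyticity there ``follows similarly'' from the non-vanishing of $\rho$, whereas you explicitly resolve the apparent $0/0$ singularity of the exponent $\kappa r(\rho-1)=\kappa h(\rho-1)/s$ by writing $\rho-1=\frac{\rho^2-1}{\rho+1}$ and extracting the factor of $s$ from $\rho^2-1=-2\langle x/r,x'/h\rangle s+|x'/h|^2s^2$. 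That step is the genuine content of the claim of analyticity ``up to and including infinity,'' and your treatment of it (together with the remark that the square-root branch is safe because $\rho^2$ stays away from the negative axis) is a welcome sharpening rather than a deviation; no gaps.
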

\begin{proof}
  The claimed analyticity of the function $g_S$ around $x_0 = \mathbf{x}(s_0, \theta_0, \varphi_0)$
  (and, thus, the analyticity of $g_S$ around $(s_0, \theta_0,\varphi_0)$)
  is immediate since, under the assumed hypothesis, the quantity
\begin{equation}\label{eq:denominator}
  \left |\frac{x}{r} - \frac{x'}{h}s\right|,
\end{equation}
does not vanish in a neighborhood of $x=x_0$. Analyticity around
$s_0=0$ ($r_0=\infty$) follows similarly since the
quantity~\eqref{eq:denominator} does not vanish around $s=s_0 = 0$.
\end{proof}

\begin{corollary} \label{corol:sufficientconditions}
    Let $H > 0$ be given. Then for all $x' \in B(x_S, H)$ the function $g_S({\mathbf x}(s,\theta, \varphi),x')$ is an analytic function of $(s,\theta,\varphi)$ for  $0 \leq s < 1$, $0\leq \theta\leq \pi$ and $0\leq \varphi< 2\pi$.
\end{corollary}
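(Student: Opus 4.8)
The plan is to deduce the corollary directly from Lemma~\ref{lem:analyticity}: since analyticity is a local property, it suffices to show that every point $(s_0,\theta_0,\varphi_0)$ with $0\le s_0<1$, $0\le\theta_0\le\pi$, $0\le\varphi_0<2\pi$ admits a neighborhood on which $g_S(\mathbf{x}(s,\theta,\varphi),x')$ is analytic, and this will follow from the lemma once its hypotheses are checked at each such point. The two ingredients are (a) the bound $|x'|\le h$ for $x'\in B(x_S,H)$, which is immediate from~\eqref{eq:def_eps} under the standing convention $x_S=0$, and (b) a suitable choice of aperture parameter $\eta\in(0,1)$ placing the relevant pair in $A_\eta^H$.

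First I would treat the case $0<s_0<1$. Set $x_0\coloneqq\mathbf{x}(s_0,\theta_0,\varphi_0)=\tilde{\mathbf{x}}(r_0,\theta_0,\varphi_0)$ with $r_0=h/s_0\in(0,\infty)$, so that $|x_0|=r_0$. Using (a), $|x'|\le h=s_0 r_0$, and hence, with the choice $\eta\coloneqq\tfrac{1+s_0}{2}$ — which satisfies $s_0<\eta<1$ precisely because $s_0<1$ — we get $|x'|\le s_0 r_0\le\eta\,|x_0|$, i.e.\ $(x_0,x')\in A_\eta$. Since moreover $x'\in B(x_S,H)$, in fact $(x_0,x')\in A_\eta^H$ by~\eqref{eq:defAdeltaH}. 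Lemma~\ref{lem:analyticity} then asserts that $g_S$ is an analytic function of $(s,\theta,\varphi)$ in a neighborhood of $(s_0,\theta_0,\varphi_0)$, as required.

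The remaining case $s_0=0$ corresponds to the point at infinity ($r_0=\infty$) and is covered verbatim by the last sentence of Lemma~\ref{lem:analyticity}, which guarantees analyticity of $g_S$ as a function of $(s,\theta,\varphi)$ for all $0\le\theta\le\pi$, $0\le\varphi<2\pi$ and all $s$ in a neighborhood of $s_0=0$. Putting the two cases together covers the entire parameter set $\{0\le s<1,\ 0\le\theta\le\pi,\ 0\le\varphi<2\pi\}$ and proves the corollary.

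I do not expect a genuine obstacle here; the argument is essentially bookkeeping around the definition of $h$. The only points deserving a little care are: first, that the aperture $\eta$ can be taken strictly below $1$ while still absorbing $|x'|\le h$, which works exactly because $s_0<1$ forces $|x'|/|x_0|=s_0|x'|/h\le s_0<1$; and second, that one must not try to push the first clause of Lemma~\ref{lem:analyticity} to cover $s_0=0$ (where $r_0=\infty$ is not a bona fide radius), but instead invoke its dedicated ``$s$ near $0$'' clause — the apparent $s^{-1}$ blow-up of the phase $\kappa r\bigl(|x/r-(x'/h)s|-1\bigr)$ at $s=0$ being removable, which is precisely what that clause encodes. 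The coordinate degeneracy of spherical coordinates at $\theta\in\{0,\pi\}$ is likewise harmless, since the forward parametrization $\mathbf{x}$, and with it the unit vector $x/r$ entering~\eqref{factor_r_eps}, depends analytically on $(\theta,\varphi)$ across those latitudes.
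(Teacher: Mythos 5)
Your proof is correct and follows essentially the same route as the paper: both arguments use $|x'|\le h$ together with $s=h/r$ to place $(\mathbf{x}(s,\theta,\varphi),x')$ in $A_\eta^H$ for a suitable $\eta<1$ (the paper takes an arbitrary $\eta\in(0,1)$ and covers $0\le s\le\eta$; you choose $\eta=(1+s_0)/2$ pointwise) and then invoke Lemma~\ref{lem:analyticity}. Your explicit separation of the $s_0=0$ case via the lemma's ``neighborhood of $s_0=0$'' clause is a slightly more careful rendering of the same argument.
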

\begin{proof}
  Take $\eta\in(0,1)$. Then, for $0 \leq s \leq \eta$ we have
  $({\mathbf x}(s,\theta, \varphi), x') \in A_\eta^H$. The
  analyticity for $0\leq s\leq \eta$ follows from
  Lemma~\ref{lem:analyticity}, and since $\eta\in(0,1)$ is arbitrary,
  the lemma follows.
\end{proof}
For a given $x'\in\mathbb{R}^3$,
Corollary~\ref{corol:sufficientconditions} reduces the problem of
interpolation of the function $g_S(x,x')$ in the $x$ variable to a
problem of interpolation of a re-parametrized form of the function
$g_S$ over a bounded domain---provided that $(x,x')\in A_{\eta}^H$,
or, in other words, provided that $x$ is {\color{MyGreen} separated
  from $x'$ by a factor of at least $\eta$}, for some $\eta<1$. In
the IFGF algorithm presented in Section~\ref{subsec:algorithm},
side-$H$ boxes $B(x_S, H)$ containing sources $x'$ are considered,
with target points $x$ at a distance no less than $H$ away from
$B(x_S, H)$. Clearly, a point $(x, x')$ in such a configuration
necessarily belongs to $A_{\eta}^H$ with $\eta =
\sqrt{3}/3$. Importantly, as demonstrated in the following section,
the interpolation quality of the algorithm does not degrade as source
boxes of increasingly large side $H$ are used, as is done in the
proposed multi-level IFGF algorithm (with a single box size at each
level), leading to a computing cost per level which is independent of
the level box size $H$.

\subsection{Interpolation} \label{subsec:interpolation} On the basis
of the discussion presented in Section~\ref{subsec:analyticity}, the
present section concerns the problem of interpolation of the function
$g_S$ in the variables $(s, \theta, \varphi)$. For efficiency,
piece-wise Chebyshev interpolation in each one of these variables is
used, over interpolation intervals of respective lengths $\Delta_s$,
$\Delta_\theta$ and $\Delta_\varphi$, where, for a certain positive
integer $n_C$, {\color{MyGreen}angular coordinate intervals} of size
\[
\Delta_\theta = \Delta_\varphi =  \frac{\pi}{n_C},
\]
are utilized. Defining
\begin{equation*}
  \theta_k = k \Delta_\theta, \quad (k = 0, \ldots, n_C-1) \quad
  \mbox{and}\quad	\varphi_\ell = \ell \Delta_\varphi, \quad (\ell = 0, \ldots, 2 n_{C} -1),
\end{equation*}
as well as
\begin{equation} \label{eq:defconedomain}
     E^\varphi_{j} = [\varphi_{j-1}, \varphi_j) \quad \text{ and } \quad E^\theta_{i, j} = \begin{cases} [\theta_{n_C-1}, \pi] \quad &\text{for} \quad i = n_C, \mkern5mu j = 2 n_C \\
     (0, \Delta_\theta) \quad &\text{for} \quad i = 1, \mkern5mu j > 1
     \\ [\theta_{i-1}, \theta_i) \quad &\text{otherwise,} \end{cases}
\end{equation}
we thus obtain the mutually disjoint {\color{MyGreen}{\em interpolation cones}}
\begin{equation}\label{eq:defcone}
  \tilde{C}_{i, j} \coloneqq  \left\{ x = \tilde{\mathbf x}(r, \theta, \varphi) \, : \, r \in (0, \infty), \mkern5mu \theta \in E^\theta_{i, j}, \mkern5mu \varphi \in E^\varphi_{j} \right\},\quad (i = 1,\dots, n_C, j = 1, \ldots, 2 n_C),
\end{equation}
centered at $x_S = (0,0,0)^T$. Note that the definition
\eqref{eq:defcone} ensures that \begin{equation*} \bigcup
  \limits_{\substack{1 = 1, \ldots, n_C \\ j = 1, \ldots, 2 n_C}}
  \tilde C_{i, j} = \mathbb{R}^3 \setminus \{0\} \qquad \text{and}
  \qquad \tilde C_{i, j} \cap \tilde C_{k, l} = \emptyset \quad
  \text{for} \quad (i, j) \neq (k, l).
\end{equation*}

The proposed interpolation strategy additionally relies on a number
$n_s \in \mathbb{N}$ of disjoint radial interpolation intervals
$E_k^s$, $k = 1, \ldots, n_s$, of size $\Delta_s = \eta/n_s$, within
the IFGF $s$-variable radial interpolation domain $[0, \eta]$
(with $\eta = \sqrt{3}/3$, see Section~\ref{subsec:analyticity}). Thus, in
all, the approach utilizes an overall number
$N_C \coloneqq n_s \times n_C \times 2 n_C$ of interpolation domains
\begin{equation} \label{eq:defconedomainproduct}
    E_\gamma \coloneqq E_{\gamma_1}^s \times E_{\gamma_2}^\theta \times E_{\gamma_3}^\varphi,
\end{equation}
which we call {\em cone domains}, with
$\mathbf{\gamma} = (\gamma_1, \gamma_2, \gamma_3) \in \{1, \ldots, n_s
\} \times \{1, \ldots, n_C \} \times \{1, \ldots, 2 n_C \}$. Under the
parametrization $\mathbf{x}$ in
equation~\eqref{eq:defparametrizationins}, the cone domains yield the
{\em cone segment} sets
\begin{equation} \label{eq:defconesegments}
    C_\mathbf{\gamma} \coloneqq \{ x = \mathbf{x}(s, \theta, \varphi) \, : \, (s, \theta, \varphi) \in E_\gamma \}.
\end{equation}
{\color{red} Note that, by definition, the cone segments are mutually disjoint.}
A two-dimensional illustration of the cone domains and associated cone
segments is provided in Figure~\ref{fig:conedomaincone}.
\begin{figure}
    \centering
    \includegraphics[width=0.6\textwidth]{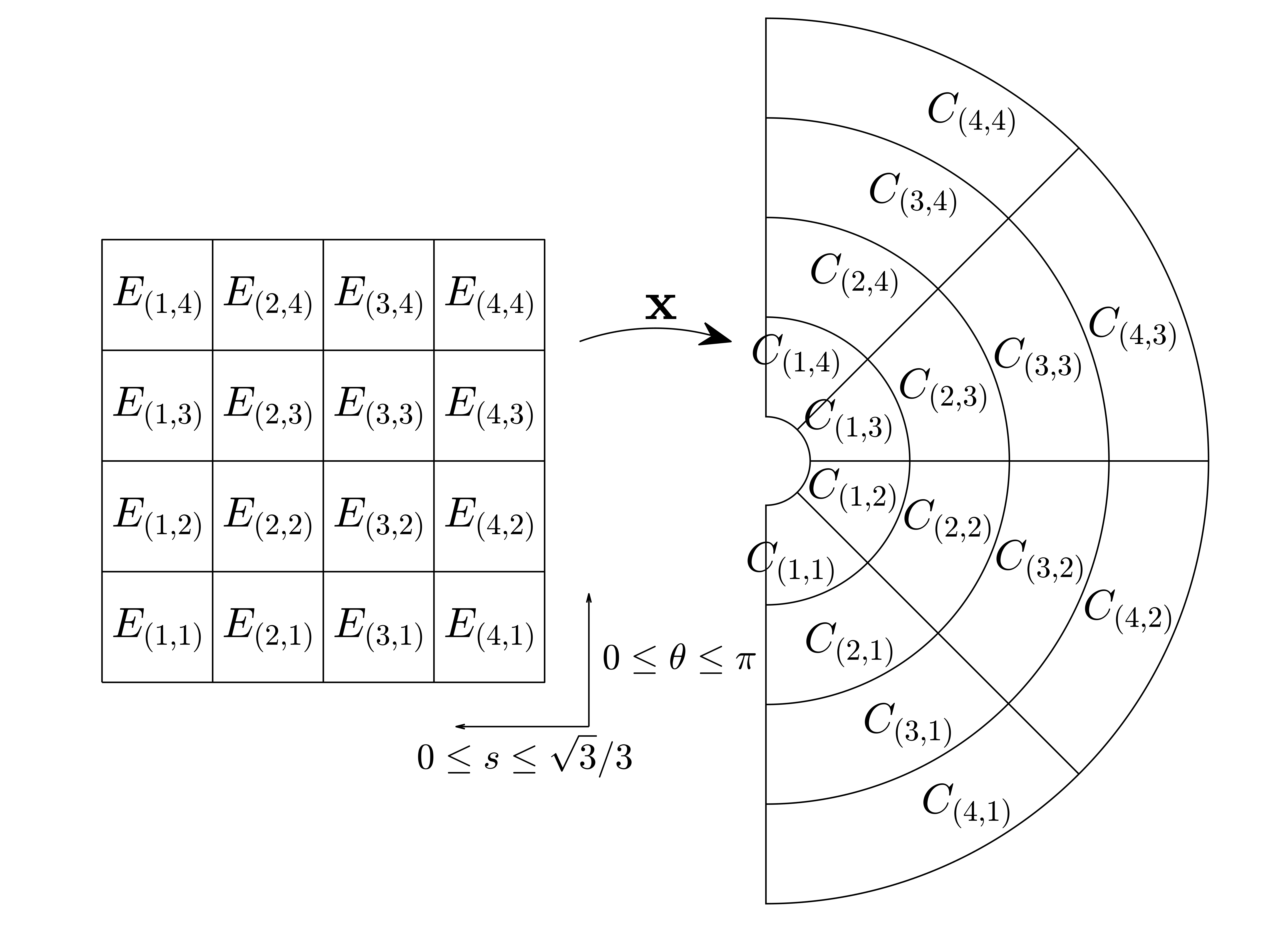}
    \caption{Schematic two-dimensional illustration of a set of cone
      domains $E_\gamma$, together with the associated cone segments
      $C_\gamma$ that result under the parametrization
      \eqref{eq:defparametrizationins}. For the sake of simplicity,
      the illustration shows constant cone-segment radial sizes (in
      the $r$ variable), but the actual radial sizes are constant in
      the $s$ variable (equation~\eqref{eq:defparametrizationins}),
      instead. Thus, increasingly large real-space cone segments are
      used as the distance of the interpolation cone segments to the
      origin grows. }
    \label{fig:conedomaincone}
\end{figure}

The desired interpolation strategy then relies on the use of a fixed
number $P = P_{\text{ang}}^2 P_{\text{s}}$ of interpolation points for
each cone segment $C_\mathbf{\gamma}$, where $P_{\text{ang}}$
(resp. $P_{\text{s}}$) denotes the number of Chebyshev interpolation
points per interval used for each angular variable (resp. for the
radial variable $s$). For each cone segment, the proposed
interpolation approach proceeds by breaking up the problem into a sequence of one-dimensional Chebyshev interpolation problems of accuracy orders $P_s$ and $P_{\text{ang}}$, as described in
\cite[Sec. 3.6.1]{NumericalRecipes2007}, along each one of the three coordinate directions $s$, $\theta$ and $\varphi$. This spherical Chebyshev interpolation procedure is described in what follows, and an associated error estimate is presented which is then used to guide the
selection of cone segment sizes.
	
The one-dimensional Chebyshev interpolation polynomial $ I^\mathrm{ref}_n u$ of
accuracy order $n$ for a given function $u : [-1, 1] \to \mathbb{C}$
over the reference interval $[-1, 1]$ is given by the
expression
\begin{equation} \label{eq:defchebyshevinterpooperator}
  I^\mathrm{ref}_n u(x) = \sum \limits_{i = 0}^{n-1} a_i T_i(x),\quad
  x \in [-1, 1],
\end{equation}
where $T_i(x) = \cos (i \arccos (x))$ denotes the $i$-th Chebyshev
polynomial of the first kind, and where, letting
\[
  x_k = \cos\left(\frac{2 k + 1}{2 n} \pi \right),\quad
    b_i = \left\{ \begin{array}{ll}
        1 \quad &i \neq 0 \\
        2 \quad &i = 0,
    \end{array}\right. \quad\mbox{and} \quad c_k = \left \{ \begin{array}{ll}
        0.5 \quad &k = 0 \text{ or } k = n-1 \\
        1 \quad &\text{else},
     \end{array}\right.
\]
the coefficients $a_i \in \mathbb{C}$ are given by
\begin{equation} \label{eq:defchebyshevcoeffsandpoints}
 a_i = \frac{2}{b_i (n-1)} \sum \limits_{k = 0}^{n-1} c_k u(x_k) T_i(x_k).
\end{equation}
Chebyshev expansions for functions defined on arbitrary intervals
$[a, b]$ result from use of a linear interval mapping to the reference
interval $[-1,1]$; for notational simplicity, the corresponding
Chebyshev interpolant in the interval $[a, b]$ is denoted by $I_n u$,
without explicit reference to the interpolation interval $[a,b]$.

As is known (\hspace{1sp}\cite[Sec. 7.1]{deuflhard2018numerische},
\cite{fox1968chebyshev}), the one-dimensional Chebyshev interpolation
error $|u(x) - I_n u(x)|$ in the interval $[a,b]$ satisfies the bound
\begin{equation} \label{eq:errorestimatechebinterpolation}
    |u(x) - I_n u(x)| \leq \frac{(b-a)^{n}}{2^{2n - 1} n! } \norm{\frac{\partial^{n} u}{\partial x^{n}}}_{\infty},
\end{equation}
where 
\begin{equation}
    \norm{\frac{\partial^{n} u}{\partial x^{n}}}_{\infty} \coloneqq \sup \limits_{c \in (a, b)} \left|\frac{\partial^{n} u}{\partial x^{n}}(c)\right|
\end{equation}
denotes the supremum norm of the $n$-th partial derivative. The desired error
estimate for the nested Chebyshev interpolation procedure within a
cone segment~\eqref{eq:defconesegments} (or, more precisely, within
the cone domains~\eqref{eq:defconedomainproduct}) is provided by the
following theorem.
\begin{theorem} \label{theorem:errorestimatenested} Let $I_{P_s}^s$,
  $I_{P_{\text{ang}}}^\theta$, and $I_{P_{\text{ang}}}^\varphi$ denote the Chebyshev interpolation operators of accuracy orders $P_s$ in
  the variable $s$ and $P_{\text{ang}}$ in the angular variables
  $\theta$ and $\varphi$, over intervals $E^s$, $E^\theta$, and
  $E^\varphi$ of lengths $\Delta_s$, $\Delta_\theta$, and
  $\Delta_\varphi$ in the variables $s$, $\theta$, and $\varphi$,
  respectively. Then, for each arbitrary but fixed point
  $x' \in \mathbb{R}^3$ the error arising from nested interpolation of
  the function $g_S(\mathbf{x}(s, \theta, \varphi), x')$
  (cf. equation~\eqref{eq:defparametrizationins}) in the variables
  $(s, \theta, \varphi)$ satisfies the estimate
  \begin{multline} \label{eq:errorestimate}
      |g_S(\mathbf{x}(s, \theta, \varphi), x') - I_{P_{\text{ang}}}^\varphi I_{P_{\text{ang}}}^\theta I_{P_s}^s g_S(\mathbf{x}(s, \theta, \varphi), x')| \leq \\
      C\left[ (\Delta_s)^{P_{\text{s}}} \norm{\frac{\partial^{P_s} g_S}{\partial s^{P_s}}}_{\infty} + (\Delta_\theta)^{P_{\text{ang}}} \norm{\frac{\partial^{P_{\text{ang}}} g_S}{\partial \theta^{P_{\text{ang}}}}}_{\infty} + (\Delta_\varphi)^{P_{\text{ang}}} \norm{\frac{\partial^{P_{\text{ang}}} g_S}{\partial \varphi^{P_{\text{ang}}}}}_{\infty} \right],
  \end{multline}
  for some constant $C$ depending only on $P_s$ and $P_{\text{ang}}$,
  where the supremum-norm expressions are shorthands for the supremum
  norm defined by
  \begin{equation*}
    \norm{\frac{\partial^{n} g_S}{\partial \xi^{n}}}_{\infty} \coloneqq \sup \limits_{\substack{\tilde s \in E^s \\ \tilde \theta \in E^\theta \\ \tilde \varphi \in E^\varphi}} \left|\frac{\partial^{n} g_S}{\partial \xi^{n}}({\mathbf x}(\tilde s, \tilde \theta, \tilde \varphi), x')\right|
\end{equation*}
for $\xi = s$, $\theta$, or $\varphi$.
\end{theorem}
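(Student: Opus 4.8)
The plan is to reduce the three--dimensional nested interpolation error to a telescoping sum of one--dimensional Chebyshev interpolation errors, each of which is controlled by the classical bound~\eqref{eq:errorestimatechebinterpolation}, together with the standard uniform ($L^\infty$) bound on the one--dimensional Chebyshev interpolation operator. Writing $I_1 = I_{P_s}^s$, $I_2 = I_{P_{\text{ang}}}^\theta$, $I_3 = I_{P_{\text{ang}}}^\varphi$, each understood as a linear operator that acts on the single indicated variable of a function of $(s,\theta,\varphi)$ while the other two are held fixed, and letting $I$ denote the identity, I would start from the telescoping identity
\begin{equation*}
  g_S - I_3 I_2 I_1 g_S = (I-I_1)g_S + (I-I_2)\,I_1 g_S + (I-I_3)\,I_2 I_1 g_S,
\end{equation*}
obtained by adding and subtracting $I_1 g_S$ and $I_2 I_1 g_S$. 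It then suffices to estimate the three summands separately and add.

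For the first summand, for each fixed $(\theta,\varphi)$ the quantity $(I-I_1)g_S$ is exactly the order-$P_s$ Chebyshev interpolation error of $s\mapsto g_S(\mathbf x(s,\theta,\varphi),x')$ on the interval $E^s$, so~\eqref{eq:errorestimatechebinterpolation} bounds it by $\tfrac{(\Delta_s)^{P_s}}{2^{2P_s-1}P_s!}\,\norm{\partial^{P_s} g_S/\partial s^{P_s}}_{\infty}$, which already has the required form. For the second summand, for each fixed $(s,\varphi)$ this is the order-$P_{\text{ang}}$ Chebyshev error in $\theta$ of the function $I_1 g_S$; applying~\eqref{eq:errorestimatechebinterpolation} in the $\theta$ variable bounds it by $\tfrac{(\Delta_\theta)^{P_{\text{ang}}}}{2^{2P_{\text{ang}}-1}P_{\text{ang}}!}\,\norm{\partial^{P_{\text{ang}}}(I_1 g_S)/\partial\theta^{P_{\text{ang}}}}_{\infty}$. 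The crucial point here is that $I_1$, acting only on $s$, produces a finite linear combination of the values of $g_S$ at the ($\theta$- and $\varphi$-independent) Chebyshev nodes $s_k\in E^s$ with ($\theta$- and $\varphi$-independent) coefficient functions of $s$; hence $I_1$ commutes with $\partial_\theta$, i.e.\ $\partial_\theta^{P_{\text{ang}}}(I_1 g_S) = I_1(\partial_\theta^{P_{\text{ang}}} g_S)$, and combining this with the classical uniform bound $\norm{I_1 u}_\infty\le \Lambda_{P_s}\norm{u}_\infty$ --- where $\Lambda_{P_s}$ is the Lebesgue constant of order-$P_s$ Chebyshev interpolation, a constant depending only on $P_s$ --- gives $\norm{\partial^{P_{\text{ang}}}(I_1 g_S)/\partial\theta^{P_{\text{ang}}}}_\infty \le \Lambda_{P_s}\norm{\partial^{P_{\text{ang}}} g_S/\partial\theta^{P_{\text{ang}}}}_\infty$. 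The third summand is handled identically: $\partial_\varphi$ commutes with both $I_1$ and $I_2$, so $\partial_\varphi^{P_{\text{ang}}}(I_2 I_1 g_S) = I_2 I_1(\partial_\varphi^{P_{\text{ang}}} g_S)$, and two applications of the uniform bound contribute a factor $\Lambda_{P_{\text{ang}}}\Lambda_{P_s}$.

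Adding the three estimates and taking $C$ to be the largest of the three resulting prefactors --- $\tfrac{1}{2^{2P_s-1}P_s!}$, $\tfrac{\Lambda_{P_s}}{2^{2P_{\text{ang}}-1}P_{\text{ang}}!}$ and $\tfrac{\Lambda_{P_s}\Lambda_{P_{\text{ang}}}}{2^{2P_{\text{ang}}-1}P_{\text{ang}}!}$ --- yields~\eqref{eq:errorestimate} with $C$ depending only on $P_s$ and $P_{\text{ang}}$. All the derivatives on the right-hand side are finite because, by Corollary~\ref{corol:sufficientconditions}, $g_S(\mathbf x(s,\theta,\varphi),x')$ is analytic, hence $C^\infty$, in $(s,\theta,\varphi)$ on the relevant domain $E^s\times E^\theta\times E^\varphi$ (contained in $[0,\eta]\times[0,\pi]\times[0,2\pi)$ with $\eta<1$). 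I expect the one genuinely non-mechanical step to be the commutation-plus-Lebesgue-constant argument: one must verify that the intermediate interpolants $I_1 g_S$ and $I_2 I_1 g_S$ retain enough smoothness in the not-yet-interpolated variables and that their high-order derivatives are controlled by those of $g_S$ with constants independent of $\Delta_s,\Delta_\theta,\Delta_\varphi$ --- which is precisely what the commutation identity and the $P$-only dependence of the Lebesgue constant deliver. Everything else is the bound~\eqref{eq:errorestimatechebinterpolation} applied three times plus bookkeeping of constants.
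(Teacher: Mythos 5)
Your proposal is correct and follows essentially the same route as the paper's proof: a telescoping/triangle-inequality decomposition, the one-dimensional bound~\eqref{eq:errorestimatechebinterpolation} applied in each variable, and control of the derivatives of the intermediate interpolants via the fact that the partial interpolation operators act through $\theta$- and $\varphi$-independent nodes and coefficients. The paper carries out your ``commutation plus Lebesgue constant'' step concretely by differentiating the coefficient formula~\eqref{eq:defchebyshevcoeffsandpoints} and using $\norm{T_i}_\infty \leq 1$, which is the same estimate in different packaging.
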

\begin{proof}
  The proof is only presented for a double-nested interpolation
  procedure; the extension to the triple-nested method is entirely
  analogous. Suppressing, for readability, the explicit functional
  dependence on the variables $x$ and $x'$, use of the triangle
  inequality and the error 
  estimate~\eqref{eq:errorestimatechebinterpolation} yields
  \begin{align*}
     |g_S -  I_{P_{\text{ang}}}^\theta I_{P_s}^s g_S| &\leq |f -  I_{P_s}^s g_S| + |I_{P_{\text{ang}}}^\theta I_{P_s}^s g_S - I_{P_s}^s g_S|  \\
     &\leq C_1 (\Delta_s)^{P_s} \norm{\frac{\partial^{P_s} g_S}{\partial s^{P_s}}}_\infty + C_2 (\Delta_\theta)^{P_{\text{ang}}} \norm{\frac{\partial^{P_{\text{ang}}} I_{P_s}^s g_S}{\partial \theta^{P_{\text{ang}}}}}_\infty,
  \end{align*}
  where $C_1$ and $C_2$ are constants depending on $P_s$ and
  $P_{\text{ang}}$, respectively. In order to estimate the second term
  on the right-hand side in terms of derivatives of $g_S$ we utilize
  equation~\eqref{eq:defchebyshevcoeffsandpoints} in the shifted
  arguments corresponding to the $s$-interpolation interval $(a,b)$:
\[
    I_{P_s}^s g_S = \sum \limits_{i = 0}^{P_s-1} a_i^s(\theta)
    T_i\left(2\frac{s - a}{b - a} - 1\right),\quad (b=a+\Delta_s).
\]
Differentiation with respect to $\theta$ and use of the relations
\eqref{eq:defchebyshevinterpooperator} and
\eqref{eq:defchebyshevcoeffsandpoints} then yield
\begin{equation*}
      \norm{\frac{\partial^{P_{\text{ang}}} I_{P_s}^s g_S}{\partial \theta^{P_{\text{ang}}}}}_\infty 
      \leq P_s \max \limits_{i = 1, \ldots, P_s -1} \norm{\frac{\partial^{P_{\text{ang}}} a_i^s}{\partial \theta^{P_{\text{ang}}}}}_\infty
      \leq C_3 \norm{\frac{\partial^{P_{\text{ang}}} g_S}{\partial \theta^{P_{\text{ang}}}}}_\infty,
    \end{equation*}
    as it may be checked, for a certain constant $C_3$ depending on
    $P_s$, by employing the triangle inequality and the $L^\infty$
    bound $\norm{T_i}_\infty \leq 1$ ($i\in \mathrm{\mathbb{N}_0 = \mathbb{N}\cup\{0\}}$). The
    more general error estimate~\eqref{eq:errorestimate} follows by a
    direct extension of this argument to the triple-nested case, and
    the proof is thus complete.
\end{proof}

The analysis presented in what follows, including
Lemmas~\ref{lem:derivatives} through~\ref{lem:dexponential_s_estimate}
and Theorem~\ref{theorem:Error}, yields bounds for the partial
derivatives in \eqref{eq:errorestimate} in terms of the acoustic size
$\kappa H$ of the source box $B(x_S, H)$. Subsequently, these bounds
are used, together with the error estimate~\eqref{eq:errorestimate},
to determine suitable choices of the cone domain sizes $\Delta_s$,
$\Delta_\theta$, and $\Delta_\varphi$, ensuring that the errors
resulting from the triple-nested interpolation process lie below a
prescribed error tolerance. Leading to Theorem~\ref{theorem:Error},
the next three lemmas provide estimates, in terms of the box size $H$,
of the $n$-th order derivatives ($n\in \mathbb{N}$) of certain
functions related to $g_S(\mathbf{x}(s, \theta, \varphi) ,x')$, with
respect to each one of the variables $s$, $\theta$, and $\varphi$ and
every $x' \in B(x_S, H)$.
{\color{MyGreen} \begin{lemma} \label{lem:derivatives} Under the change of variables
  $x = {\mathbf x}(s, \theta, \varphi)$
  in~\eqref{eq:defparametrizationins}, for all $n \in \mathbb{N}$ and
  for either $\xi=\theta$ or $\xi=\varphi$, we have
    \begin{equation*}
        \frac{\partial^n}{\partial \xi^n}|x - x'| = \sum \frac{c(m_1, \ldots, m_n)}{|x-x'|^{2 k - 1}} \prod \limits_{j = 1}^n \left \langle \frac{\partial^j x}{\partial \xi^j}, x' \right \rangle^{m_j},
    \end{equation*}
    where the outer sum is taken over all $n$-tuples
$(m_1, \ldots, m_n)\in \mathbb{N}_0^{n}$ such that 
    \begin{equation*} 
        \sum \limits_{j = 1}^n j m_{j} = n,
    \end{equation*}
    where $k := \sum_{i=1}^n m_i$,
    where $c(m_1, \ldots, m_n) \in \mathbb{R}$ denote constants independent of $x$,
    $x'$ and $\xi$, and where $\langle \cdot, \cdot \rangle$ denotes the Euclidean
    inner product on $\mathbb{R}^3$.
\end{lemma}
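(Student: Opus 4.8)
The plan is to reduce the entire computation to the elementary identity
\[
|x-x'|^2 = |x|^2 - 2\langle x, x'\rangle + |x'|^2 ,
\]
combined with the structural observation that, under the parametrization~\eqref{eq:defparametrizationins}, the radius $|x| = r = h/s$ depends only on $s$ and is therefore \emph{constant} with respect to both $\theta$ and $\varphi$ (equivalently, $\partial x/\partial\xi$ is tangent to the sphere $|x|=r$ and hence orthogonal to $x$); this is precisely why the statement is restricted to $\xi\in\{\theta,\varphi\}$ and fails for $\xi=s$. Holding also $x'$ fixed, differentiation $j\geq 1$ times with respect to $\xi$ annihilates the first and third summands above, leaving the clean formula $\partial_\xi^{\,j}\!\left(|x-x'|^2\right) = -2\big\langle \partial^j x/\partial\xi^j,\, x'\big\rangle$.

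Next I would apply Fa\`a di Bruno's formula to the composition $\rho = u^{1/2}$ with $u = |x-x'|^2$, both regarded as functions of $\xi$:
\[
\frac{\partial^n \rho}{\partial \xi^n} = \sum \frac{n!}{\prod_{j} m_j!\,(j!)^{m_j}}\left(\frac{d^k}{du^k}u^{1/2}\right)\prod_{j=1}^n\left(\frac{\partial^j u}{\partial\xi^j}\right)^{m_j},
\]
the sum ranging over $(m_1,\dots,m_n)\in\mathbb{N}_0^n$ with $\sum_j j m_j = n$ and $k := \sum_i m_i$. Since $\dfrac{d^k}{du^k}u^{1/2} = \binom{1/2}{k}k!\,u^{1/2-k}$ is a real constant times $u^{1/2-k} = |x-x'|^{1-2k} = |x-x'|^{-(2k-1)}$, and since $\left(\partial_\xi^{\,j}u\right)^{m_j} = (-2)^{m_j}\big\langle \partial^j x/\partial\xi^j, x'\big\rangle^{m_j}$, substitution into the Fa\`a di Bruno sum reproduces exactly the claimed expression, with
\[
c(m_1,\dots,m_n) = \frac{n!\,(-2)^{k}\,k!}{\prod_{j} m_j!\,(j!)^{m_j}}\binom{1/2}{k},
\]
a real number independent of $x$, $x'$, and $\xi$, and with the denominator exponent equal to $2k-1$, $k=\sum_i m_i$, as asserted. (A quick check at $n=1$ gives the single tuple $(m_1)=(1)$, $k=1$, $c(1)=-1$, i.e. $\partial_\xi|x-x'| = -\langle\partial_\xi x,x'\rangle/|x-x'|$, as it must.)

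There is no genuine obstacle here; the only points demanding care are (i) the justification that $|x|$ is $\xi$-independent, which is immediate from~\eqref{eq:defparametrizationr}--\eqref{eq:defparametrizationins}, and (ii) verifying that the bookkeeping in Fa\`a di Bruno's formula aligns the multiplicity $k$ governing the power $u^{1/2-k}$ with $\sum_i m_i$ and the partition constraint with $\sum_j j m_j = n$. Should one wish to avoid invoking Fa\`a di Bruno, the same conclusion follows by a routine induction on $n$: differentiating a generic term $|x-x'|^{1-2k}\prod_j\langle\partial^j x/\partial\xi^j, x'\rangle^{m_j}$ once more either acts on the radial factor, via $\partial_\xi|x-x'|^{1-2k} = (2k-1)|x-x'|^{-1-2k}\langle\partial_\xi x, x'\rangle$, raising $k$ and $m_1$ each by one, or acts on one of the inner-product factors, lowering some $m_j$ by one and raising $m_{j+1}$ by one; in both cases $\sum_j j m_j$ increases by exactly one and $k=\sum_i m_i$ continues to record the denominator exponent, so the asserted form is preserved and the constants remain real and independent of $x,x',\xi$.
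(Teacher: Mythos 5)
Your proof is correct and follows essentially the same route as the paper's: both apply Fa\`a di Bruno's formula to the composition $\sqrt{u}$ with $u=|x-x'|^2$, using the orthogonality $\langle \partial x/\partial\xi, x\rangle=0$ (equivalently, the $\xi$-independence of $|x|$) to reduce $\partial_\xi^j u$ to a constant multiple of $\langle \partial^j x/\partial\xi^j, x'\rangle$. Your version merely adds explicit constants, the $n=1$ sanity check, and an optional inductive alternative, none of which changes the argument.
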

\begin{proof}
  The proof follows from Fa\`a di Bruno's formula \cite{Bruno1857}
  applied to $f(g(x)) = |x-x'|$, where $f(x) = \sqrt{x}$ and
  $g(x) = \langle x, x \rangle - 2 \langle x, x' \rangle + \langle x',
  x' \rangle$. Indeed, noting that
  \begin{equation*}
      \frac{d^k f(x)}{d x^k} = c_1(k) \frac{1}{f(x)^{2 k -1}}, 
  \end{equation*}
  for some constant $c_1(k)$, and that, since
  $\langle \frac{\partial x}{\partial \xi}, x \rangle = 0$ for
  $\xi=\theta$ and $\xi=\varphi$,
  \begin{equation*}
      \frac{\partial^i g(x(\xi))}{d \xi^i} = c_2(i) \left \langle \frac{\partial^i x}{\partial \xi^i}, x' \right \rangle,
  \end{equation*}
  for some constant $c_2(i)$, an application of Fa\`a di Bruno's
  formula directly yields the desired result.
\end{proof} }
\begin{lemma} \label{lem:exponentsestimate} Let $H>0$ and
  $\eta\in(0,1)$ be given. Then, under the change of variables
  $x = {\mathbf x}(s, \theta, \varphi)$
  in~\eqref{eq:defparametrizationins},  the exponent in the
  right-hand exponential in~\eqref{eq:factored_f} satisfies
    \begin{equation*}
    \frac{\partial^n}{\partial \xi^n} \left( |x - x'| - |x|\right) \leq C(\eta, n) H,
  \end{equation*}
  for all $(x, x') \in A_{\eta}^H$, for all
  $n \in \mathbb{N}_0$, and for
$\xi=s$, $\xi=\theta$ and $\xi=\varphi$, where $C(\eta, n)$ is a
certain real constant that depends on $\eta$ and $n$, but which is
independent of $H$.
\end{lemma}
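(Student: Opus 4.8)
The plan is to dispose of the case $n=0$ at once and then treat the angular variables $\xi=\theta,\varphi$ and the radial variable $\xi=s$ by different arguments, the latter being where the real work lies. For $n=0$ and any of the three variables, the triangle inequality gives $|x-x'|-|x|\le|x'|\le h=\frac{\sqrt3}{2}H$, by \eqref{eq:def_eps} and the inclusion $x'\in B(x_S,H)$ (recall the running convention $x_S=0$), so $C(\eta,0)=\frac{\sqrt3}{2}$ works.

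For $\xi=\theta$ or $\xi=\varphi$ and $n\ge1$, the key point is that under \eqref{eq:defparametrizationins} the quantity $|x|=r=h/s$ does not depend on $\theta$ or $\varphi$; hence $\frac{\partial^n}{\partial\xi^n}\bigl(|x-x'|-|x|\bigr)=\frac{\partial^n}{\partial\xi^n}|x-x'|$, and Lemma~\ref{lem:derivatives} applies directly. I would then estimate each summand there: writing $x=r\,\hat e(\theta,\varphi)$ with the unit vector $\hat e$ of \eqref{eq:defparametrizationr}, one has $\bigl\|\partial^j x/\partial\theta^j\bigr\|=r\,\|\partial^j\hat e/\partial\theta^j\|\le r$ and likewise $\bigl\|\partial^j x/\partial\varphi^j\bigr\|\le r$, so by Cauchy--Schwarz $\bigl|\langle\partial^j x/\partial\xi^j,x'\rangle\bigr|\le r|x'|$ and the product over $j$ is bounded in modulus by $(r|x'|)^k$ with $k=\sum m_i\ge1$. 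Splitting $(r|x'|)^k=r^k\,|x'|\,|x'|^{k-1}$ and using the two consequences $|x'|\le h$ and $|x'|\le\eta r$ of $(x,x')\in A_\eta^H$, together with the reverse triangle inequality $|x-x'|\ge(1-\eta)r$, yields $\frac{(r|x'|)^k}{|x-x'|^{2k-1}}\le\frac{\eta^{k-1}}{(1-\eta)^{2k-1}}\,h$; summing over the finite index set of Lemma~\ref{lem:derivatives} gives the bound with a constant depending only on $\eta$ and $n$.

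The radial case $\xi=s$, $n\ge1$, is the crux, since now $|x|=h/s$ varies with $s$ and the individual terms $\partial_s^n|x-x'|$ and $\partial_s^n|x|$ are each of size $O(1/s)$, so the $O(H)$ bound can only come from cancellation. To expose it, write $x=\frac hs\hat e$ for the $s$-independent unit vector $\hat e=\hat e(\theta,\varphi)$ and note the identity $|x-x'|-|x|=\frac hs\bigl(R(s)-1\bigr)=h\,\Phi(s)$, where $R(s)=\sqrt{1-2as+bs^2}$ with $a=\langle\hat e,x'\rangle/h$, $b=|x'|^2/h^2$, and $\Phi(s)=(R(s)-1)/s$. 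Since $R$ is analytic near $s=0$ with $R(0)=1$, the Hadamard-type identity $\Phi(s)=\int_0^1 R'(ts)\,dt$ holds, so $\Phi^{(n)}(s)=\int_0^1 t^n R^{(n+1)}(ts)\,dt$ and $|\Phi^{(n)}(s)|\le\frac1{n+1}\sup_{u\in[0,s]}|R^{(n+1)}(u)|$. I would bound $R^{(n+1)}$ by Fa\`a di Bruno's formula applied to $R=f\circ y$ with $f(y)=y^{1/2}$ and $y(s)=1-2as+bs^2$: here $y^{(j)}\equiv0$ for $j\ge3$, $|y'(u)|\le2|a|+2bu\le4$ and $|y''(u)|=2b\le2$ (using $|a|\le1$, $0\le b\le1$ and $bu\le\eta$ for $u\le s$), while $|f^{(j)}(y(u))|=|c_j|\,R(u)^{1-2j}\le|c_j|(1-\eta)^{1-2j}$ for $j\ge1$, because for every $u\in[0,s]$ the point $x_u=\mathbf{x}(u,\theta,\varphi)$ satisfies $|x_u|=h/u\ge h/s\ge|x'|/\eta$, hence $(x_u,x')\in A_\eta^H$ and $R(u)=\frac uh|x_u-x'|\ge1-\eta$. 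Summing the finitely many Fa\`a di Bruno terms gives $\sup_{u\in[0,s]}|R^{(n+1)}(u)|\le D(\eta,n+1)$, and therefore $\bigl|\partial_s^n(|x-x'|-|x|)\bigr|=h\,|\Phi^{(n)}(s)|\le\frac{\sqrt3}{2(n+1)}D(\eta,n+1)\,H$.

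The single obstacle worth flagging is the one just addressed: one must avoid bounding $\partial_s^n|x-x'|$ and $\partial_s^n|x|$ separately and instead work with the manifestly regular combination $h\,(R(s)-1)/s$. The $s$-change of variables \eqref{eq:defparametrizationins} is designed precisely so that this combination is an analytic function on a fixed compact interval whose derivatives are controlled, uniformly in $H$, by the single nondegeneracy estimate $R\ge1-\eta$ furnished by the hypothesis $(x,x')\in A_\eta^H$.
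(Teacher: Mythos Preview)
Your argument is correct. For the angular variables $\xi=\theta,\varphi$ you follow essentially the same path as the paper: observe that $|x|$ is angular-independent, apply Lemma~\ref{lem:derivatives}, and estimate each summand using Cauchy--Schwarz together with $|x'|\le\eta r$ and $|x-x'|\ge(1-\eta)r$.

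For $\xi=s$ your route differs from the paper's. The paper writes $|x-x'|-|x|=h\,g(s)$ with $g(s)=\frac1s\bigl(\bigl|\frac{x}{r}-\frac{x'}{h}s\bigr|-1\bigr)$ and then argues qualitatively: $g$ depends analytically on $s$ over the compact interval $[0,\eta\,h/|x'|]$, and on the parameters $x/r$ and $x'/h$ which range over compact sets (the unit sphere and the closed unit ball), so by analyticity-plus-compactness all $s$-derivatives of $g$ are uniformly bounded by constants depending only on $\eta$ and $n$. You instead make this quantitative: your $\Phi$ coincides with the paper's $g$, and you bound $\Phi^{(n)}$ explicitly via the Hadamard identity $\Phi(s)=\int_0^1 R'(ts)\,dt$ and Fa\`a di Bruno applied to $R=\sqrt{y}$, using the concrete nondegeneracy bound $R(u)\ge1-\eta$ on $[0,s]$. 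Your approach yields the same conclusion with effective (if not sharp) constants, whereas the paper's compactness argument is shorter but nonconstructive. Both identify the same essential mechanism---the factorization $|x-x'|-|x|=h\cdot(\text{bounded analytic function of }s)$---which is precisely the point you flag at the end.
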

\begin{proof}
  Expressing the exponent in~\eqref{eq:factored_f} in terms of $s$
  yields
\begin{align} \label{eq:exponentins}
|x - x'| - |x| = \frac{h}{s}\left( \left \vert \frac{x}{r} - \frac{x'}{h} s \right \vert - 1 \right) =: h g(s),
\end{align}
where our standing assumption $x_S = 0$ and notation $|x| =r$ have
been used (so that, in particular, $x/r$ is independent of $r$ and
therefore also independent of $s$), and where the angular dependence
of the function $g$ has been suppressed. Clearly,
$g(s)$ is an analytic function of $s$ for $s\in \big[0, h/|x'|\big)$
and, thus, since $\eta< 1$, for $s$ in the compact interval
$\big[0, \eta\cdot h/|x'|\big]$. It follows that $g$ and each one of
its derivatives with respect to $s$ is uniformly bounded for all
$s\in \big[0, \eta\cdot h/|x'|\big]$ and (as shown by a simple
re-examination of the discussion above) for all $H$ and for all
values of $x/r$ and $x'/h$ under consideration. Since at the point
$(x,x')$ we have
$s= h/|x| = |x'|/|x||\cdot h/|x'\leq \eta\cdot h/|x'|$,
using~\eqref{eq:def_eps} once again, the desired $\xi = s$ estimate
\begin{equation*}
    \frac{\partial^n}{\partial s^n} \left( h g(s) \right) \leq C(\eta, n) H,
\end{equation*}
follows, for some constant $C(\eta, n)$.

Turning to the angular variables, we only consider the case
$\xi = \theta$; the case $\xi =\varphi$ can be treated similarly. Using
Lemma~\ref{lem:derivatives} for $\xi = \theta$, the Cauchy Schwarz
inequality and the assumption $(x, x') \in A_\eta^H$, we
obtain
{\color{MyGreen}
\begin{align*}
    \left| \frac{\partial^n \left( |x - x'| - |x| \right)}{\partial \theta^n} \right| &= \left| \frac{\partial^n \left( |x - x'| \right)}{\partial \theta^n} \right| = \left \vert\sum \frac{c(m_1, \ldots, m_n)}{|x-x'|^{2 k - 1}} \prod \limits_{j = 1}^n \left \langle \frac{\partial^j x}{\partial \xi^j}, x' \right \rangle^{m_j} \right\vert \\
    &\leq \sum \frac{|c(m_1, \ldots, m_n)|}{|x - x'|^{2 k - 1}} \prod \limits_{j = 1}^n \left | \frac{\partial^j x}{\partial \xi^j} \right |^{m_j} \left|x'\right|^{m_j} \leq \sum \limits_{k = 1}^n \hat{C}(\eta, n)\frac{1}{r^{2 k - 1}} r^k \left|x'\right|^k \\ &\leq \tilde{C}(\eta, n) \left|x'\right| \leq C(\eta, n) H,
\end{align*}
where the same notation as in Lemma~\ref{lem:derivatives} was used.
The constant $C(\eta, n)$ has been suitably adjusted. The proof is now
complete.}
\end{proof}
																																 
\begin{lemma} \label{lem:dexponential_s_estimate} Let $H>0$ and
  $\eta\in(0,1)$ be given. Then, under the change of variables
  $x = {\mathbf x}(s, \theta, \varphi)$
  in~\eqref{eq:defparametrizationins}, for all
  $(x, x') \in A_{\eta}^H$, for all $n \in \mathbb{N}_0$,
  and for $\xi=s$, $\xi=\theta$ and $\xi=\varphi$, we have
\begin{equation*}
\left| \frac{\partial^n}{\partial \xi^n} e^{\imath \kappa \left( |x - x'| - |x|\right)} \right| \leq \tilde M(\eta, n) \left(\kappa H\right)^n,
\end{equation*}
where $\tilde M(\eta, n)$ is a certain real constant that depends on
$\eta$ and $n$ but which is independent of  $H$.
\end{lemma}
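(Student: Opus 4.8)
The plan is to reduce the statement to Fa\`a di Bruno's formula together with the already-established linear-in-$H$ control of the phase. Write $\psi(\xi) \coloneqq |x-x'| - |x|$ and $F(t) \coloneqq e^{\imath\kappa t}$, so that the quantity to be estimated is $\partial_\xi^n (F\circ\psi)$ for $\xi\in\{s,\theta,\varphi\}$ and $(x,x')\in A_\eta^H$. Two structural features of $F$ drive the argument: first, since $\psi$ is real-valued on $A_\eta^H$, one has $|F(\psi)|=1$; second, $F$ reproduces itself under differentiation up to a scalar, $F^{(k)}=(\imath\kappa)^k F$, so that each differentiation of the outer factor contributes exactly one power of $\kappa$. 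Moreover, by Lemma~\ref{lem:analyticity} and Corollary~\ref{corol:sufficientconditions}, $\psi$ is a real-analytic --- in particular $C^\infty$ --- function of $\xi$ on the relevant closed interpolation ranges (for $\xi=s$ this includes the point $s=0$, i.e. $|x|=\infty$, where the representation $\psi = h\,g(s)$ with $g$ analytic at $s=0$ from the proof of Lemma~\ref{lem:exponentsestimate} applies), so Fa\`a di Bruno's formula is legitimate.

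First I would record the Fa\`a di Bruno expansion
\[
  \partial_\xi^n e^{\imath\kappa\psi}
  = e^{\imath\kappa\psi}\sum c(m_1,\dots,m_n)\,(\imath\kappa)^{k}\prod_{j=1}^n\bigl(\partial_\xi^j\psi\bigr)^{m_j},
\]
where the sum runs over all $(m_1,\dots,m_n)\in\mathbb{N}_0^n$ with $\sum_j j\,m_j = n$, the combinatorial constants $c(m_1,\dots,m_n)$ depend only on $n$ and the partition, and $k\coloneqq\sum_j m_j$ satisfies $1\le k\le n$ for $n\ge1$ (the case $n=0$ being immediate since $|e^{\imath\kappa\psi}|=1$). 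Taking absolute values and using $|e^{\imath\kappa\psi}|=1$ gives $\bigl|\partial_\xi^n e^{\imath\kappa\psi}\bigr| \le \sum |c(m_1,\dots,m_n)|\,\kappa^{k}\prod_{j=1}^n\bigl|\partial_\xi^j\psi\bigr|^{m_j}$.

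Next I would insert the bound of Lemma~\ref{lem:exponentsestimate}, $\bigl|\partial_\xi^j(|x-x'|-|x|)\bigr| \le C(\eta,j)\,H$, valid for every $(x,x')\in A_\eta^H$, every $j\in\mathbb{N}_0$, and $\xi\in\{s,\theta,\varphi\}$. This replaces the $j$-th factor by $(C(\eta,j)H)^{m_j}$, and, crucially, since $\sum_j m_j = k$, all the $H$'s collapse into a single $H^k$, leaving each summand bounded by $\tilde C(\eta,n)\,\kappa^k H^k = \tilde C(\eta,n)\,(\kappa H)^k$. As the outer sum has only finitely many terms (one per partition of $n$), collecting them and bounding each $(\kappa H)^k$, $1\le k\le n$, by $(\kappa H)^n$ yields $\bigl|\partial_\xi^n e^{\imath\kappa\psi}\bigr| \le \tilde M(\eta,n)\,(\kappa H)^n$, with $\tilde M(\eta,n)$ depending only on $\eta$ and $n$.

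I do not expect a serious obstacle: the argument is a templated pairing of Fa\`a di Bruno with Lemma~\ref{lem:exponentsestimate}. The two points deserving attention are (i) justifying that Fa\`a di Bruno applies, i.e. that $\psi$ is $C^\infty$ in $\xi$ throughout the closed interpolation domain --- this is exactly the analyticity supplied by Lemma~\ref{lem:analyticity} and Corollary~\ref{corol:sufficientconditions} under the hypothesis $(x,x')\in A_\eta^H$; and (ii) the bookkeeping identity $\sum_j m_j = k$, which is what makes the $k$ powers of $\kappa$ produced by differentiating the exponential match the $k$ powers of $H$ produced by the inner derivatives, giving the clean $(\kappa H)^n$ scaling. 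The final step of replacing $(\kappa H)^k$ by $(\kappa H)^n$ is the only place where the combined parameter $\kappa H$ enters, and it is elementary once one notes $1\le k\le n$.
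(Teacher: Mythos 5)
Your argument is exactly the paper's proof: Fa\`a di Bruno applied to $e^{\imath\kappa(|x-x'|-|x|)}$, followed by the triangle inequality, $|e^{\imath\kappa\psi}|=1$, and the bound of Lemma~\ref{lem:exponentsestimate} on the inner derivatives --- the paper's version is simply terser and leaves the final bookkeeping implicit. The one caveat (which afflicts the lemma as stated just as much as your write-up) is that your closing step $(\kappa H)^k\le(\kappa H)^n$ for $1\le k\le n$ is only valid when $\kappa H\ge 1$; for $\kappa H<1$ the low-order partitions dominate and the argument really yields $\max\{\kappa H,(\kappa H)^n\}$, which is consistent with the $\max\{(\kappa H)^n,1\}$ form ultimately used in Theorem~\ref{theorem:Error}.
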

\begin{proof}
    Using Fa\`a di Bruno's formula \cite{Bruno1857} yields
\begin{equation*}
    \frac{\partial^n}{\partial \xi^n} e^{\imath \kappa \left( |x - x'| - |x|\right)} = \sum c(m_1, \ldots, m_n) e^{\imath \kappa \left( |x - x'| - |x|\right)} \prod \limits_{j = 1}^n \left( \imath \kappa \frac{\partial^j \left( |x - x'| - |x|\right)}{\partial \xi^j} \right)^{m_j},
\end{equation*}
where the sum is taken over all $n$-tuples
$(m_1, \ldots, m_n)\in \mathbb{N}_0^{n}$
such that
\begin{equation*}
  \sum \limits_{j = 1}^n j m_j = n,
\end{equation*}
and where $c(m_1, \ldots, m_n)$ are certain constants which depend on
$m_1, \ldots, m_n$. Using the triangle inequality and Lemma
\ref{lem:exponentsestimate} then completes the proof.
\end{proof}

The desired bounds on derivatives of the function $g_S$ are presented
in the following theorem.
\begin{theorem} \label{theorem:Error} Let $H>0$ and $\eta\in(0,1)$ be
  given. Then, under the change of variables
  $x = {\mathbf x}(s, \theta, \varphi)$
  in~\eqref{eq:defparametrizationins}, for all
  $(x, x') \in A_{\eta}^H$, for all $n \in \mathbb{N}_0$, and
  for $\xi=s$, $\xi=\theta$ and $\xi=\varphi$, we have
\begin{equation*}
    \left| \frac{\partial^n g_S}{\partial \xi^n} \right| \leq M(\eta, n) \max{\left\{(\kappa H)^n, 1\right\}},
\end{equation*}
where $M(\eta, n)$ is a certain real constant that depends on $\eta$
and $n$ but which is independent of $H$.
\end{theorem}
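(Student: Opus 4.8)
The plan is to split $g_S$ into the two factors that appear explicitly in \eqref{factor_r_eps}, namely the rational amplitude factor $A:=\bigl(4\pi\,|x/r-(x'/h)\,s|\bigr)^{-1}$ and the pure-phase factor $E:=e^{\imath\kappa(|x-x'|-|x|)}$, so that $g_S=A\cdot E$. Lemma~\ref{lem:dexponential_s_estimate} already supplies the bound $|\partial_\xi^{n}E|\le\tilde M(\eta,n)(\kappa H)^{n}$ for $\xi\in\{s,\theta,\varphi\}$, so the only thing left to prove is that the pure $\xi$-derivatives of $A$ are bounded by a constant $M_A(\eta,n)$ that is independent of $H$. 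Granting this, the Leibniz rule gives
\[
\partial_\xi^n g_S\;=\;\sum_{j=0}^n\binom nj\bigl(\partial_\xi^{j}A\bigr)\bigl(\partial_\xi^{n-j}E\bigr),
\]
so that $|\partial_\xi^n g_S|\le\sum_{j=0}^n\binom nj M_A(\eta,j)\,\tilde M(\eta,n-j)(\kappa H)^{n-j}$; since $(\kappa H)^{m}\le\max\{(\kappa H)^n,1\}$ for every $0\le m\le n$, this yields the claimed estimate with $M(\eta,n):=\sum_{j=0}^n\binom nj M_A(\eta,j)\,\tilde M(\eta,n-j)$. (If $x'=0$ then $g_S\equiv1$ and the inequality is trivial, so we may assume $x'\neq0$; the existence of all derivatives involved is guaranteed by Lemma~\ref{lem:analyticity} and Corollary~\ref{corol:sufficientconditions}.)

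To bound the derivatives of $A$, I would exploit the way the change of variables \eqref{eq:defparametrizationins} rescales the source box. Writing $\hat x:=x/r$, which lies on the unit sphere $S^2\subset\mathbb{R}^3$ and depends only on $(\theta,\varphi)$, and $v:=(x'/h)\,s$, which depends only on $s$, the condition $(x,x')\in A_{\eta}^H$ together with $|x'|\le h$ (cf. \eqref{eq:def_eps}) forces
\[
|v|\;=\;\frac{|x'|}{h}\,s\;=\;\frac{|x'|}{h}\cdot\frac{h}{|x|}\;=\;\frac{|x'|}{|x|}\;\le\;\eta,\qquad\text{while}\qquad \Bigl|\tfrac{x'}{h}\Bigr|\le1.
\]
Hence $A=\bigl(4\pi|\hat x-v|\bigr)^{-1}$ is one fixed real-analytic function of $(\hat x,v)$ on the compact set $S^2\times\{v\in\mathbb{R}^3:|v|\le\eta\}$ — completely independent of $H$ — on which the denominator is bounded below by $1-\eta>0$; therefore $A$ and all its partial derivatives in $(\hat x,v)$ are bounded on this set by constants depending only on $\eta$ and the differentiation order. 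Differentiating $A$ purely in $s$ then produces, by the chain rule, combinations of the $v$-derivatives of $\bigl(4\pi|\hat x-v|\bigr)^{-1}$ weighted by products of components of $x'/h$, each of absolute value $\le1$; differentiating purely in $\theta$ or $\varphi$ produces, by Fa\`a di Bruno's formula, combinations of the $\hat x$-derivatives of $\bigl(4\pi|\hat x-v|\bigr)^{-1}$ weighted by derivatives of $\hat x(\theta,\varphi)=(\sin\theta\cos\varphi,\sin\theta\sin\varphi,\cos\theta)^T$, every one of which has norm at most $1$. In either case one gets $|\partial_\xi^n A|\le M_A(\eta,n)$ with $M_A(\eta,n)$ depending only on $\eta$ and $n$, as needed.

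The main obstacle is exactly the $H$-uniformity claimed in this last step: a naive "analytic on a compact set" argument does not apply directly, since for a fixed source point $x'$ the admissible range of $s$ (namely $s\le\eta h/|x'|$) still depends on $H$. The resolution — which is really the conceptual point behind the whole factorization — is to notice that the source point enters $A$ only through the rescaled quantities $x'/h$ and $v=(x'/h)s$, whose ranges $\{|x'/h|\le1\}$ and $\{|v|\le\eta\}$ are $H$-independent. Once this is in hand, the derivative estimates for $A$ are routine multivariate chain-rule bookkeeping, and combining them with Lemma~\ref{lem:dexponential_s_estimate} via the Leibniz rule as above completes the proof.
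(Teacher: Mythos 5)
Your proposal is correct and follows essentially the same route as the paper: a Leibniz-rule splitting of $g_S$ into the amplitude factor $\bigl|\tfrac{x}{r}-\tfrac{x'}{h}s\bigr|^{-1}$ (whose $\xi$-derivatives are bounded uniformly in $H$) and the exponential factor handled by Lemma~\ref{lem:dexponential_s_estimate}. Your compactness argument in the rescaled variables $\hat x=x/r$ and $v=(x'/h)s$, with $|\hat x - v|\geq 1-\eta$, is simply a more explicit rendering of the ``analyticity argument similar to the one used in the proof of Lemma~\ref{lem:exponentsestimate}'' that the paper invokes for the same purpose.
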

\begin{proof}
  The quotient on the right-hand side of~\eqref{eq:factored_f} may be
  re-expressed in the form
\begin{align} \label{eq:factorins}
	\frac{|x|}{|x -  x'|} = \frac{1}{\left \vert \frac{x}{r} - \frac{ x'}{h} s\right \vert},
\end{align}
where $x/r$ is independent of $r$ and therefore also independent of
$s$. An analyticity argument similar to the one used in the proof of
Lemma~\ref{lem:exponentsestimate} shows that this quotient, as well as
each one of its derivatives with respect to $s$, is uniformly bounded
for $s$ throughout the interval $\big[0, \eta\cdot h/|x'|\big]$, for
all $H>0$, and for all relevant values of $x/r$ and $x'/h$.

In order to obtain the desired estimates we now utilize Leibniz'
differentiation rule, which yields
\begin{align*}
    \left \vert \frac{\partial^n g_S(x,  x')}{\partial \xi^n} \right\vert = \left \vert\sum \limits_{i = 0}^n \binom{n}{i} \frac{\partial^{n-i}}{\partial \xi^{n-i}} \left(\frac{|x|}{|x- x'|} \right)\frac{\partial^i}{\partial \xi^i} \left( e^{\imath \kappa \left( |x- x'| - |x|\right)} \right) \right\vert \leq C(\eta,n) \sum \limits_{i = 0}^n \frac{\partial^i}{\partial \xi^i} e^{\imath \kappa \left( |x- x'| - |x|\right)},
\end{align*}
for some constant $C(\eta,n)$ that depends on $\eta$ and $n$, but
which is independent of $H$. Applying
Lemma~\ref{lem:dexponential_s_estimate} and suitably adjusting
constants the result follows.
\end{proof}

{\color{red} In view of the bound~\eqref{eq:errorestimate}},
Theorem~\ref{theorem:Error} shows that the interpolation error remains
uniformly small provided that the interpolation interval sizes
$\Delta_s$, $\Delta_\theta$, and $\Delta_\varphi$ {\color{MyGreen} are
  held constant for $\kappa H < 1$ and are taken to decrease like
  $\mathcal{O}(1/(\kappa H) )$ as the box sizes $\kappa H$ grow when
  $\kappa H \geq 1$}.

This observation motivates the main strategy in the IFGF algorithm. As
the algorithm progresses from one level to the next, the box sizes are
doubled, from $H$ to $2 H$, and the cone segment interpolation
interval lengths $\Delta_s$, $\Delta_\theta$, and $\Delta_\varphi$ are
{\color{MyGreen} either kept constant or decreased by a factor of
  $1/2$ (depending on whether $\kappa H<1$ or $\kappa H\geq 1$,
  respectively)}---while the interpolation error, at a fixed number of
degrees of freedom per cone segment, remains uniformly bounded. The
resulting hierarchy of boxes and cone segments is embodied in two
different but inter-related hierarchical structures: the box octree
and a hierarchy of cone segments. In the box octree each box contains
eight equi-sized child boxes. In the cone segment hierarchy,
similarly, each cone segment (spanning certain angular and radial
intervals) spawns {\em up to} eight child segments. The
$\kappa H \to \infty$ limit then is approached as the box tree
structure is traversed from children to parents and the accompanying
cone segment structure is traversed from parents to children. This
hierarchical strategy and associated structures are described in
detail in Section~\ref{subsec:algorithm}.

{\color{red} The properties of the proposed interpolation strategy, as
  implied by Theorem~\ref{theorem:Error} (in presence of
  Theorem~\ref{theorem:errorestimatenested}), are illustrated by the
  blue dash-dot error curves presented on the right-hand plot in
  Figure~\ref{fig:RadFacIncreasingBoxSizeDiffMethods}. For reference,
  this figure also includes error curves corresponding to various
  related interpolation strategies, as described below. In this
  demonstration the field generated by one thousand sources randomly
  placed within a source box $B(x_S, H)$ of acoustic size $\kappa H$
  is interpolated to one thousand points randomly placed within a cone
  segment of interval lengths $\Delta_s$, $\Delta_\theta$, and
  $\Delta_\varphi$ proportional to $\min\{1,1/(\kappa H)\}$---which,
  in accordance with Theorems~\ref{theorem:errorestimatenested}
  and~\ref{theorem:Error}, ensures essentially constant errors.  All
  curves in Figure~\ref{fig:RadFacIncreasingBoxSizeDiffMethods} report
  errors relative to the maximum absolute value of the exact
  one-thousand source field value within the relevant cone segment.
  The target cone segment used is symmetrically located around the $x$
  axis, and it lies within the $r$ range
  $3 H / 2 \leq r \leq 3 H / 2 + \Delta_r$, for the value
\begin{equation*}
    \Delta_r = \frac{9 H \Delta_s}{2 \sqrt{3}(1 - \sqrt{3} \Delta_s)}
\end{equation*}
corresponding to a given value of $\Delta_s$. It is useful to note
that, depending on the values of $\theta$ and $\varphi$, the distance
from the closest possible singularity position to the left endpoint of
the interpolation interval could vary from a distance of $H$ to a
distance of $\frac{\sqrt{3}{(\sqrt{3}-1)}}{2}H\approx 0.634 H$; cf.
Figure~\ref{fig:radialsetup1_setup}. In all cases the interpolations
were produced by means of Chebyshev expansions of degree two and four
(with numerical accuracy of orders $P_s = 3$ and $P_\text{ang} = 5$)
in the radial and angular directions, respectively.  The
($\kappa H$-dependent) radial interpolation interval sizes $\Delta_s$
were selected as follows: starting with the value
$\Delta_s = \sqrt{3}/3$ for $\kappa H = 10^{-1}$, $\Delta_s$ was
varied proportionally to $1/(\kappa H)$ (resp.
$\min\{1,1/(\kappa H)\}$) in the left-hand (resp. right-hand) plot as
$\kappa H$ increases. (Note that the value $\Delta_s=\sqrt{3}/3$,
which corresponds to the infinite-length interval going from
$r = 3H/2$ to $r = \infty$, is the maximum possible value of
$\Delta_s$ along an interval on the $x$ axis whose distance to the
source box is not smaller than one box-size $H$. In particular, the
errors presented for $\kappa H = 10^{-1}$ correspond to interpolation,
using a finite number of intervals, along the entire rightward $x$
semi-axis starting at $x = 3H/2$.)  The corresponding angular
interpolation lengths $\Delta_\theta = \Delta_\varphi$ were set to
$\pi/4$ for the initial $\kappa H = 10^{-1}$ value, and they were then
varied like the radial interval proportionally to $1/(\kappa H)$
(resp.  $\min\{1,1/(\kappa H)\}$) in the left-hand (resp. right-hand)
plot.}

As indicated above, the figure shows various interpolation results,
including results for interpolation in the variable $r$ without
factorization {\color{MyGreen}(thus interpolating the Green function
  \eqref{eq:greensfunction} directly)}, with exponential factorization
{\color{MyGreen}(factoring only $\exp{(\iota \kappa |x|)}$ and
  interpolating $\exp{(\imath \kappa (|x - x'| - |x|)}/r$)}, with
exponential and denominator factorization (called full factorization,
{\color{MyGreen} factoring the centered factor interpolating the
  analytic factor as in \eqref{eq:factored_f}}), and, finally, for the
interpolation in the $s$ variable also under full factorization. It
can be seen that the exponential factorization is beneficial for the
interpolation strategy in the {\em high frequency regime} ($\kappa H $
large) while the factorization of the denominator and the use of the
$s$ change of variables is beneficial for the interpolation in the
{\em low frequency regime} ($\kappa H $ small). Importantly, the
right-hand plot in Figure~\ref{fig:RadFacIncreasingBoxSizeDiffMethods}
confirms that, as predicted by theory, constant interval sizes in all
three variables $(s, \theta, \varphi)$ suffice to ensure a constant
error in the low frequency regime. Thus, the overall strategy leads to
constant errors for $0\leq \kappa H
<\infty$. Figure~\ref{fig:RadFacIncreasingBoxSizeDiffMethods} also
emphasizes the significance of the factorization of the denominator,
i.e. the removal of the singularity, without which interpolation with
significant accuracy would be only achievable using a prohibitively
large number of interpolation points. And, it also shows that the
change of variables from the $r$ variable to the $s$ variable leads to
a selection of interpolation points leading to improved accuracies
for small values of $\kappa H$.
\begin{figure}
\centering
\begin{subfigure}{0.48\textwidth}
\includegraphics[width=\textwidth]{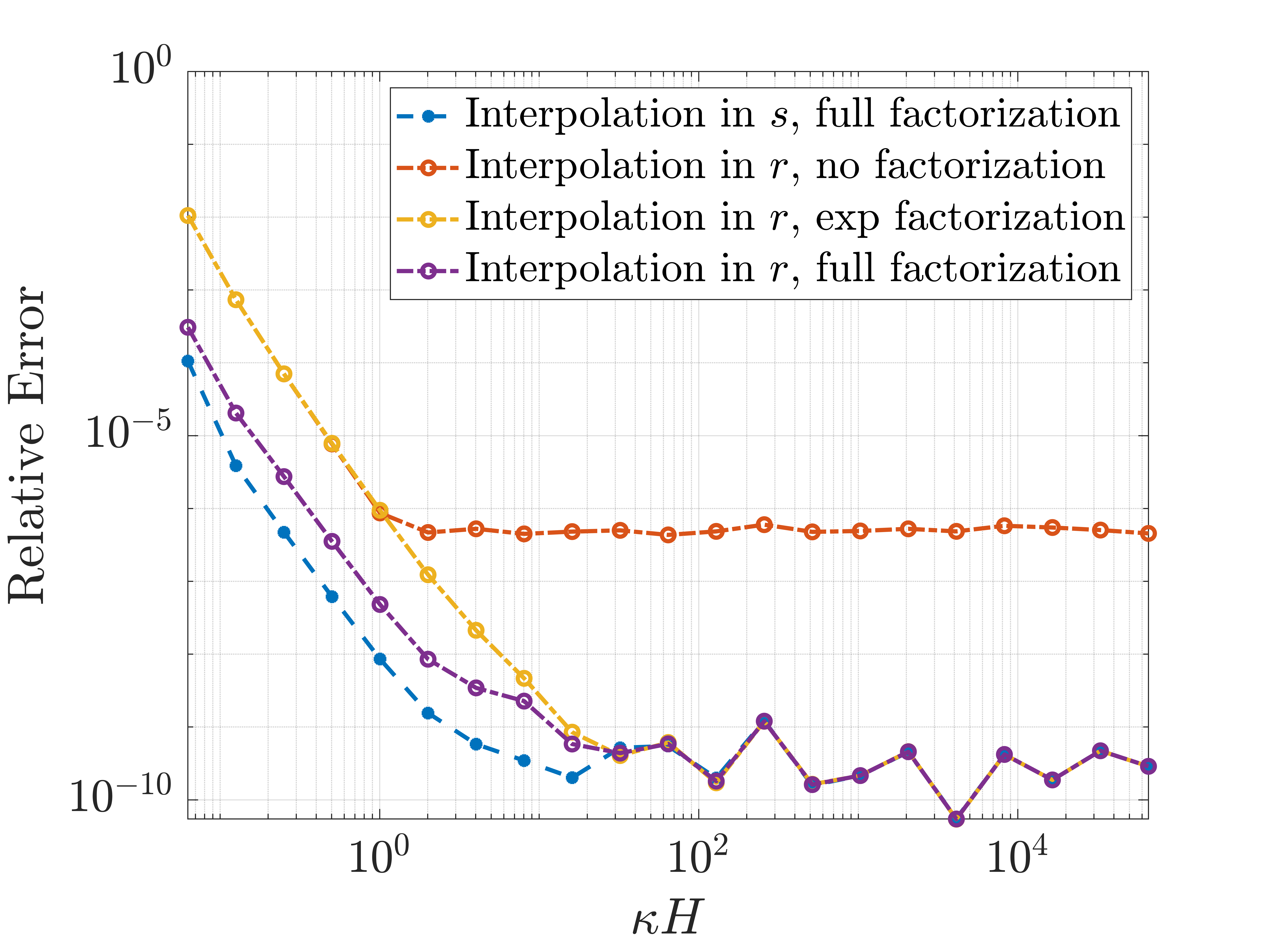}

\end{subfigure} \hspace{0.02\textwidth}
\begin{subfigure}{0.48\textwidth}
\includegraphics[width=\textwidth]{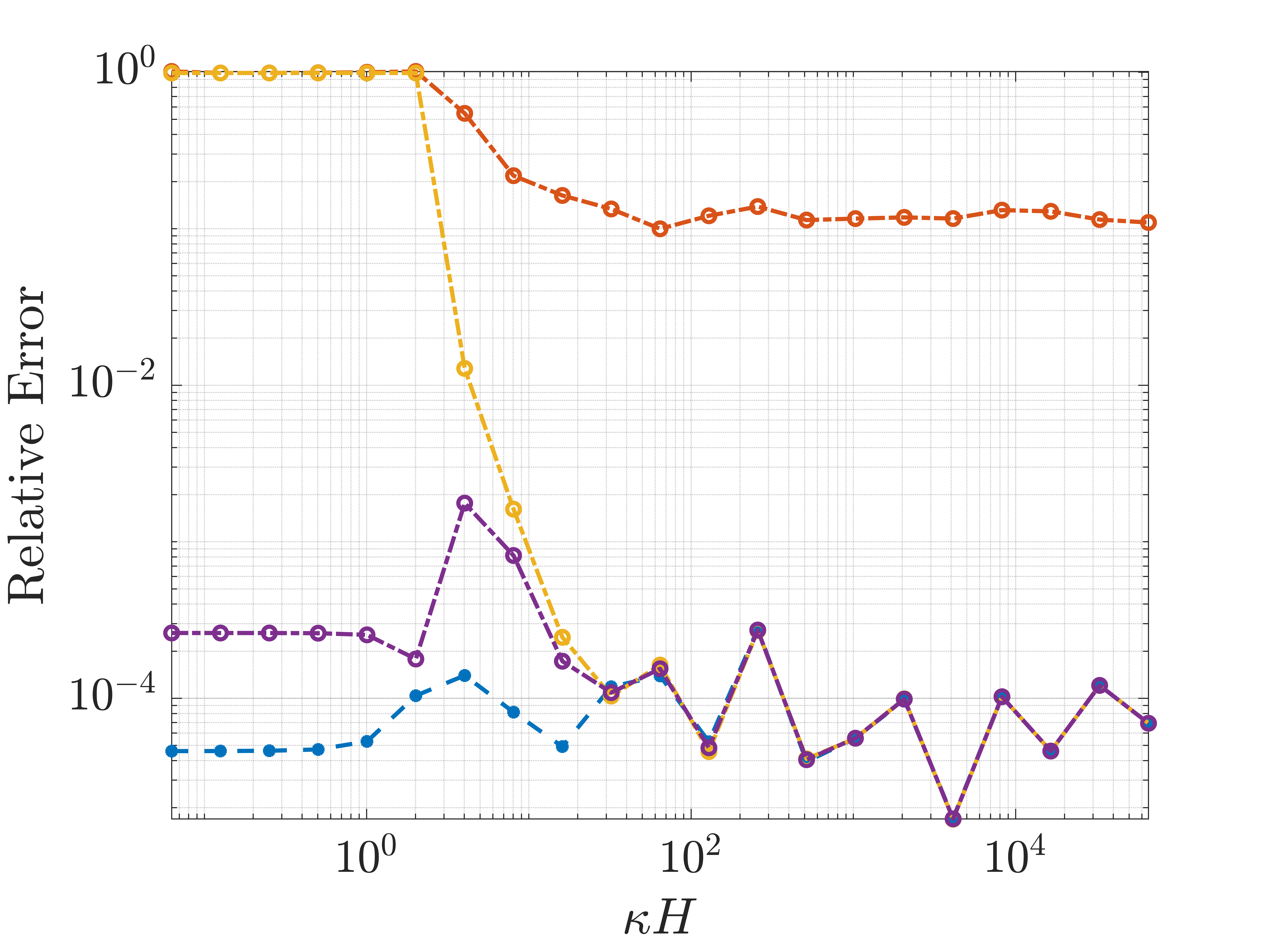}
\end{subfigure}
\caption{{\color{MyGreen} Numerical investigation of Theorem~\ref{theorem:Error} showing the overall interpolation error for various Green function
  factorizations and two different cone segment refinement strategies. Left graph: Errors resulting from use of
  interpolation intervals of sizes $\Delta_s$, $\Delta_\theta$ and
  $\Delta_\varphi$ proportional to $1/(\kappa H)$---which suffices
  to capture the oscillatory behavior for large $\kappa H$, but
  which under-resolves the singularity that arises for small
  $\kappa H$ values, for which the Green function singular point
  $x=x'$ is approached. Right graph: Errors resulting from use of
  interpolation interval sizes $\Delta_s$, $\Delta_\theta$ and
  $\Delta_\varphi$ that remain constant for small $\kappa H$ {\color{MyGreen} ($< 1$)}, and
  which decrease like $1/(\kappa H)$ for large $\kappa H$ {\color{MyGreen} ($> 1$)},
  resulting in essentially uniform accuracy for all box sizes
  provided the full IFGF factorization is used. Note that the
  combined use of full factorization and interpolation in the $s$
  variable, yields the best (essentially uniform) approximations.}}
  \label{fig:RadFacIncreasingBoxSizeDiffMethods}
\end{figure}

Theorem \ref{theorem:Error} also holds for the special $\kappa = 0$
case of the Green function for the Laplace equation. In view of its
independent importance, the result is presented, in
Corollary~\ref{corol:errorlaplace}, explicitly for the Laplace case,
without reference to the Helmholtz kernel.
\begin{corollary} \label{corol:errorlaplace} Let
  $G^\Delta(x, x') = 1/|x-x'|$ denote the Green function of the
  three dimensional Laplace equation and let
  $g_S^\Delta(x, x') = |x|/|x-x'|$ be denote the analytic kernel (cf.
  equations \eqref{eq:factor} and~\eqref{eq:factored_f} with
  $\kappa =0$). Additionally, let $H>0$ and $\eta\in(0,1)$ be
  given. Then, under the change of variables
  $x = {\mathbf x}(s, \theta, \varphi)$ in
  \eqref{eq:defparametrizationins}, for all
  $(x, x') \in A_{\eta}^H$, for all $n \in \mathbb{N}_0$, and
  for $\xi=s$, $\xi=\theta$ and $\xi=\varphi$, we have
  \begin{equation}
      \left| \frac{\partial^n g^\Delta_S}{\partial \xi^n} \right| \leq M(\eta, n),
  \end{equation}
  where $M(\eta, n)$ is a certain real constant that depends on $\eta$
  and $n$ but which is independent of $H$.
\end{corollary}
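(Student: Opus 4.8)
The plan is to recognize that Corollary~\ref{corol:errorlaplace} is, in essence, the $\kappa = 0$ specialization of Theorem~\ref{theorem:Error}, and then to record a short self-contained derivation that avoids any reference to the Helmholtz exponential. First I would note that setting $\kappa = 0$ in the definition~\eqref{eq:factored_f} of $g_S$ yields exactly $g_S^\Delta(x,x') = |x|/|x-x'|$, since the exponential factor collapses to the constant $1$; and that $\max\{(\kappa H)^n, 1\} = 1$ for every $n \in \mathbb{N}_0$ when $\kappa = 0$. With these two observations the bound of Theorem~\ref{theorem:Error} reduces verbatim to $|\partial^n g_S^\Delta/\partial\xi^n| \leq M(\eta,n)$, which is precisely the claim. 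If one prefers the fully self-contained form in which the statement is phrased, the same bound follows by retracing the two mechanisms already present in the proof of Theorem~\ref{theorem:Error}, with the exponential contribution simply dropped.

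For the direct argument with $\xi = s$, I would write $g_S^\Delta = 1/|x/r - (x'/h)s|$ exactly as in~\eqref{eq:factorins}, noting that $x/r$ is a unit vector independent of $s$ and that $x'/h$ has norm at most $1$ by~\eqref{eq:def_eps}. Hence $g_S^\Delta$ is an analytic function of $s$ on the compact interval $[0, \eta h/|x'|]$, the denominator staying bounded away from $0$ there because $s \leq \eta h/|x'|$ lies strictly below the singular value $h/|x'|$, and all of its $s$-derivatives are bounded by constants depending only on $\eta$ and $n$, since the $H$-dependence has been scaled out of the parametrization. Since any point $(x,x') \in A_\eta^H$ satisfies $s = h/|x| \leq \eta h/|x'|$, this gives the $\xi = s$ estimate.

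For $\xi = \theta$ (and $\xi = \varphi$ analogously), I would use that under the parametrization~\eqref{eq:defparametrizationins} the radius $|x| = r = h/s$ is independent of the angular variables, so $\partial^n_\theta g_S^\Delta = |x|\,\partial^n_\theta\big(1/|x-x'|\big)$. Applying Fa\`a di Bruno's formula just as in Lemma~\ref{lem:derivatives} but now to $f(t) = t^{-1/2}$ in place of $f(t) = t^{1/2}$ produces an expansion $\partial^n_\theta(1/|x-x'|) = \sum c(m_1,\dots,m_n)\,|x-x'|^{-(2k+1)}\prod_{j=1}^n \langle \partial^j x/\partial\xi^j, x'\rangle^{m_j}$ with $k = \sum_i m_i$ and $\sum_j j m_j = n$. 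Then, as in the angular part of the proof of Lemma~\ref{lem:exponentsestimate}, I would apply Cauchy--Schwarz together with the inequalities $|x-x'| \geq (1-\eta)r$ (valid since $(x,x') \in A_\eta^H$ forces $|x'| \leq \eta r$), $|\partial^j x/\partial\xi^j| \leq r$, and $|x'| \leq \eta r$, to bound each summand by $r \cdot \big((1-\eta)r\big)^{-(2k+1)} (\eta r^2)^k = \eta^k (1-\eta)^{-(2k+1)}$, a constant depending only on $\eta$ and $n$. Summing the finitely many terms gives $|\partial^n_\theta g_S^\Delta| \leq M(\eta,n)$.

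The argument is essentially routine once Theorem~\ref{theorem:Error} and Lemma~\ref{lem:derivatives} are available; the only point requiring a small amount of care is the exponent bookkeeping in the Fa\`a di Bruno expansion for $t^{-1/2}$ versus $t^{1/2}$, which shifts the denominator power from $|x-x'|^{-(2k-1)}$ to $|x-x'|^{-(2k+1)}$ — a change that is exactly absorbed by the extra factor $|x| = r$ coming from the numerator of $g_S^\Delta$, so that the $r$-powers still cancel and the bound is $H$-independent.
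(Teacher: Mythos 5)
Your proposal is correct and matches the paper's reasoning: the paper gives no separate proof of Corollary~\ref{corol:errorlaplace}, justifying it only by the observation that Theorem~\ref{theorem:Error} holds for $\kappa=0$, where $g_S$ reduces to $|x|/|x-x'|$ and $\max\{(\kappa H)^n,1\}=1$ --- exactly your first paragraph. Your additional self-contained derivation (the Fa\`a di Bruno expansion for $t^{-1/2}$ with the denominator power $|x-x'|^{-(2k+1)}$ absorbed by the factor $|x|=r$) is also correct and is in fact somewhat more explicit about the angular derivatives of the quotient than the paper's own proof of Theorem~\ref{theorem:Error}, but it is not needed for the corollary as stated.
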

Corollary~\ref{corol:errorlaplace} shows that an even simpler and more
efficient strategy can be used for the selection of the cone segment
sizes in the Laplace case. Indeed, in view of
Theorem~\ref{theorem:errorestimatenested}, the corollary tells us that
(as illustrated in Table~\ref{table:timingsLaplace}) a constant number
of cone segments per box, independent of the box size $H$, suffices to
maintain a fixed accuracy as the box size $H$ grows (as is also the
case for the Helmholtz equation for small values of $\kappa$). As
discussed in Section~\ref{sec:examples}, this reduction in complexity
leads to significant additional efficiency for the Laplace case.

Noting that Theorem~\ref{theorem:Error} implies, in particular, that
the function $g_S$ and all its partial derivatives with respect to the
variable $s$ are bounded as $s \to 0$, below in this section we
compare the interpolation properties in the $s$ and $r$ variables, but
this time in the case in which the source box is fixed and $s \to 0$
($r \to \infty$). To do this we rely in part on an upper bound on the
derivatives of $g_S$ with respect to the variable $r$, which is
presented in Corollary~\ref{corol:derivativeinr}.
\begin{corollary} \label{corol:derivativeinr} Let $H>0$ and
  $\eta\in(0,1)$ be given. Then, under the change of variables
  $x = {\mathbf x}(s, \theta, \varphi)$
  in~\eqref{eq:defparametrizationins} and for all
  $(x, x') \in A_{\eta}^H$, for all $n \in \mathbb{N}_0$ we
  have
    \begin{equation*}
          \left| \frac{\partial^n g_S}{\partial r^n} \right| \leq C_r(n, \kappa, H) \frac{1}{r^n} \sum \limits_{m \in I} \left(\frac{h}{r}\right)^m,
    \end{equation*}
    where $I$ denotes a subset of $\{1, \ldots, n\}$ including $1$.
\end{corollary}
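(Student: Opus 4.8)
The plan is to deduce the estimate directly from Theorem~\ref{theorem:Error} by means of the one‑variable chain rule relating differentiation in $r$ and in $s$ along a fixed ray. Since the angular coordinates $\theta$ and $\varphi$ are held fixed while $r$ (equivalently $s = h/r$) varies, for the given $x'$ we may regard $g_S$ as a function of the single variable $s$, and apply Fa\`a di Bruno's formula \cite{Bruno1857} to the composition $r \mapsto s = h/r \mapsto g_S$. This yields
\begin{equation*}
  \frac{\partial^n g_S}{\partial r^n} = \sum c(m_1,\dots,m_n)\, \frac{\partial^k g_S}{\partial s^k} \prod_{j=1}^n \left( \frac{d^j}{dr^j}\Big(\frac{h}{r}\Big) \right)^{m_j},
\end{equation*}
where the sum runs over all $(m_1,\dots,m_n) \in \mathbb{N}_0^n$ with $\sum_{j=1}^n j\, m_j = n$, and where $k := \sum_{j=1}^n m_j$. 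The constraint $\sum_j j\, m_j = n \ge 1$ forces $k \ge 1$, and the single‑part partition $m_n = 1$ (all other $m_j = 0$) contributes the index $k = 1$; hence the set $I$ of indices $k$ that actually occur is a subset of $\{1,\dots,n\}$ containing $1$, exactly as claimed. (For $n = 0$ the assertion is the trivial instance of Theorem~\ref{theorem:Error}.)

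Next I would compute the inner derivatives explicitly, $\frac{d^j}{dr^j}\big(h/r\big) = (-1)^j\, j!\, h\, r^{-(j+1)}$, so that
\begin{equation*}
  \left| \prod_{j=1}^n \left( \frac{d^j}{dr^j}\Big(\frac{h}{r}\Big) \right)^{m_j} \right| = \left( \prod_{j=1}^n (j!)^{m_j} \right) h^{\,k}\, r^{-\sum_{j=1}^n (j+1) m_j} = \left( \prod_{j=1}^n (j!)^{m_j} \right) \frac{h^k}{r^{\,n+k}},
\end{equation*}
where the exponent collapses because $\sum_j (j+1) m_j = \sum_j j\, m_j + \sum_j m_j = n + k$. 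Combining this with the bound $\big| \partial^k g_S / \partial s^k \big| \le M(\eta,k)\max\{(\kappa H)^k, 1\}$ supplied by Theorem~\ref{theorem:Error} (applicable since $(x,x') \in A_\eta^H$), and collecting the finitely many partition terms that contribute each fixed value of $k$, one obtains
\begin{equation*}
  \left| \frac{\partial^n g_S}{\partial r^n} \right| \leq \sum_{k \in I} \widetilde{C}(n,k,\kappa,H)\, \frac{h^k}{r^{\,n+k}} = \frac{1}{r^n} \sum_{k \in I} \widetilde{C}(n,k,\kappa,H) \left(\frac{h}{r}\right)^k,
\end{equation*}
and setting $C_r(n,\kappa,H) := \max_{k \in I} \widetilde{C}(n,k,\kappa,H)$ yields the stated estimate.

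There is no substantial obstacle here: the argument is a bookkeeping exercise with Fa\`a di Bruno's formula together with a direct appeal to Theorem~\ref{theorem:Error}. The only points requiring care are (i) the exponent accounting $\sum_j (j+1) m_j = n+k$, which is precisely what produces the clean factor $r^{-n}(h/r)^k$, and (ii) verifying that the occurring indices $k$ form a subset of $\{1,\dots,n\}$ that necessarily contains $1$ — the latter being exactly what guarantees that the slowest‑decaying term of the bound behaves like $r^{-(n+1)}$, which is the feature of the estimate that will be used subsequently to compare the $s$‑ and $r$‑variable interpolation strategies as $r \to \infty$.
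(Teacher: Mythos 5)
Your proposal is correct and follows essentially the same route as the paper, whose proof consists precisely of the one-line instruction to apply Fa\`a di Bruno's formula to the composition $g_S(s(r),\theta,\varphi)$ and invoke Theorem~\ref{theorem:Error}; you have simply carried out the bookkeeping (the derivatives of $h/r$, the exponent identity $\sum_j (j+1)m_j = n+k$, and the observation that the single-part partition forces $1\in I$) that the paper leaves implicit.
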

\begin{proof} Follows directly using Theorem~\ref{theorem:Error} and
  applying Fa\`a di Bruno's formula to the composition
  $g_S(s(r), \theta, \varphi)$.
\end{proof}
Theorem~\ref{theorem:errorestimatenested}, Theorem~\ref{theorem:Error}
and Corollary~\ref{corol:derivativeinr} show that, for any fixed
value $\kappa H$ of the acoustic source box size, the error arising
from interpolation using $n$ interpolation points in the $s$ variable
(resp. the $r$ variable) behaves like $(\Delta_s)^n$ (resp.
$(\Delta_r)^n/r^{n+1}$). Additionally, as is
easily checked, the increments $\Delta_s$ and $\Delta_r$ are related
by the identity
\begin{equation} \label{eq:relationdeltasandr} \Delta_r =
  \frac{ r_0^2\Delta_s}{h - r_0
    \Delta_s },
\end{equation}
where $h$ and $r_0$ denote the source box radius \eqref{eq:def_eps}
and the left endpoint of a given interpolation interval
$r_0 \leq r \leq r_0 + \Delta_r$, respectively. These results and
estimates lead to several simple but important conclusions. On one
hand, for a given box size $\kappa H$, a partition of the
$s$-interpolation interval $[0, \eta]$ on the basis of a finite number
of equi-sized intervals of fixed size $\Delta_s$ (on each one of which
$s$-interpolation is to be performed) provide a natural and
essentially optimal methodology for interpolation of the uniformly
analytic function $g_S$ up to the order of accuracy desired. Secondly,
such covering of the $s$ interpolation domain $[0,\eta]$ by a finite
number of intervals of size $\Delta_s$ is mapped, via
equation~\eqref{eq:defparametrizationins}, to a covering of a complete
semi-axis in the $r$ variable and, thus, one of the resulting $r$
intervals must be infinitely large---leading to large interpolation
errors in the $r$ variable. Finally, values of $\Delta_r$ leading to
constant interpolation error in the $r$ variable necessarily requires
use of infinitely many interpolation intervals and is therefore
significantly less efficient than the proposed $s$ interpolation
approach.

Figure~\ref{fig:interperror} displays interpolation errors for both
the $s$- and $r$-interpolation strategies, for increasing values of
the left endpoint $r_0$ and a constant source box one wavelength in
side. The interval $\Delta_s$ is kept constant and $\Delta_r$ is taken
per equation~\eqref{eq:relationdeltasandr}. The rightmost points in
Figure \ref{fig:interperror} are close to the singular point
$r_0 = h/ \Delta_s$ of the right-hand side in
\eqref{eq:relationdeltasandr}. The advantages of the $s$-variable
interpolation procedure are clearly demonstrated by this figure.
\begin{figure}
    \centering
    \includegraphics[width=0.5\textwidth]{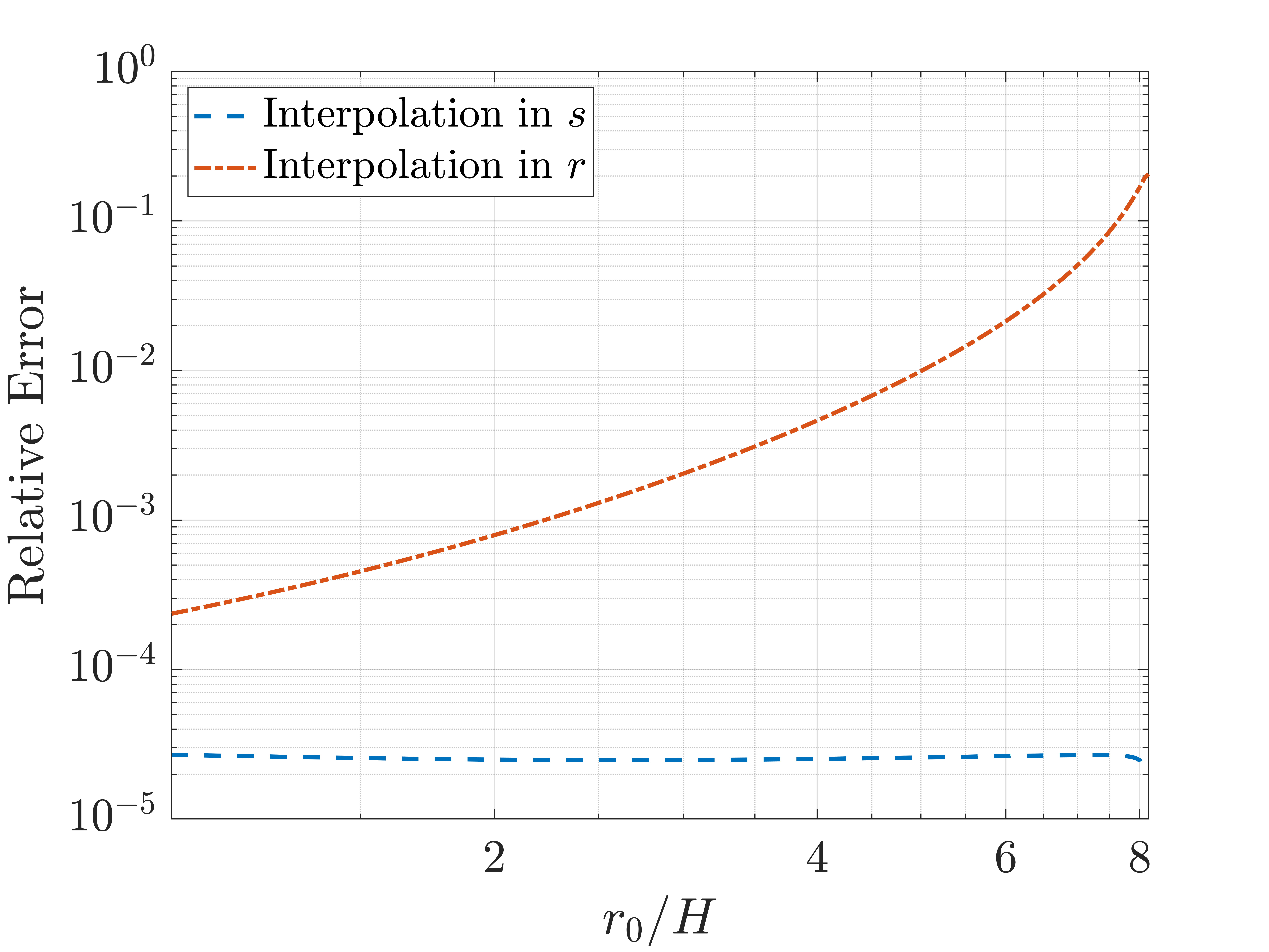}
    \caption{Comparison of the errors resulting from $r$- and
      $s$-based interpolation strategies for the problem of
      interpolation of the analytic factor $g_S$ in the interval
      $[r_0,r_0+\Delta_r)$, as a function of $r_0$. Clearly, the
      equi-spaced $s$ discretization used is optimally suited for the
      interpolation problem at hand.}
    \label{fig:interperror}
\end{figure}

\subsection{\label{subsec:algorithm}Algorithm}
The IFGF factorization and associated box and cone interpolation
strategies and structures mentioned in the previous sections
underlie the full IFGF method---whose details are presented in what
follows. Section~\ref{sec:algnotation} introduces the box and cone structures themselves, together with the associated multi-level
field evaluation strategy. The notation and definitions are then
incorporated in a narrative description of the full IFGF algorithm
presented in Section~\ref{sec:algdescription}. A pseudo-code for the
algorithm, together with a study of the algorithmic complexity of the
proposed scheme, finally, are presented in
Section~\ref{sec:algpseudocode}.

\subsubsection{Definitions and notation\label{sec:algnotation}}
The IFGF algorithm accelerates the evaluation of the discrete
operator~\eqref{eq:field1} on the basis of a certain hierarchy
$\mathcal{B}$ of boxes (each one of which provides a partitions of the
set $\Gamma_N$ of discretization points). The box hierarchy, which
contains, say, $D$ levels, gives rise to an intimately related
hierarchy $\mathcal{C}$ of interpolation cone segments. At each level $d$
($1\leq d\leq D$), the latter $D$-level hierarchy is embodied in a
cone domain partition (cf. \eqref{eq:defconedomainproduct}) in
$(s,\theta,\varphi)$ space---each partition amounting to a set of
spherical interpolation cone segments spanning all regions of space
outside certain circumscribing spheres. The details are as follows.

The level $d$ ($1\leq d\leq D$) surface partitioning is produced on
the basis of a total of $\left( 2^{d-1}\right)^3$
Cartesian boxes (see Figure~\ref{fig:Interpolationpointvarious}). The
boxes are labeled, at each level $d$, by means of certain
level-dependent multi-indices. The hierarchy is initialized by a
single box at level $d=1$,
\begin{equation} \label{eq:deffirstbox}
    B_\mathbbm{1}^{1} \coloneqq B(x_\mathbbm{1}^1, H_1) \quad (\text{cf.} \eqref{eq:defbox}),
\end{equation}
containing $\Gamma_N$ ($B_\mathbbm{1}^{1} \supset \Gamma_N$), where
$H_1 > 0$ and $x_\mathbbm{1}^1 \in \mathbb{R}^3$ denote the side and
center of the box, respectively, and where, for the sake of
consistency in the notation, the multi-index
$\mathbbm{1} \coloneqq (1, 1, 1)^T$ is used to label the single box
that exists at level $d=1$. The box $B^{1}_\mathbbm{1}$ is then
partitioned into eight level $d = 2$ equi-sized and disjoint child
boxes $B^{2}_\mathbf{k}$ of side $H_2 = H_1/2$
($\mathbf{k} \in \{1, 2\}^3$), which are then further partitioned into
eight equi-sized disjoint child boxes $B^{3}_\mathbf{k}$ of side
$H_3 = H_2 /2$ ($\mathbf{k} \in \{1,2,3,4\}^3 = \{1,\ldots,2^2\}^3$),
etc. The eight-child box partitioning procedure is continued
iteratively for all $1\leq d\leq D$, at each stage halving the box
along each one of the three coordinate directions $(x, y, z)$, and
thus obtaining, at level $d$, a total of $2^{d-1}$ boxes along each
coordinate axes. The partitioning procedure continues until level
$d = D \in \mathbb{N}$ is reached---where $D$ is chosen in such a way
that the associated box-size $H_D$ is sufficiently small. An
illustrative two-dimensional analog of the setup for the first three
levels and associated notation is presented in Figure \ref{fig:Setup}.
				
As indicated above, the box-hierarchy $\mathcal{B}$ is accompanied by
a cone segment hierarchy $\mathcal{C}$. The hierarchy $\mathcal{C}$ is
iteratively defined starting at level $d=D$ (which corresponds to the
smallest-size boxes in the hierarchy $\mathcal{B}$) and moving
backwards towards level $d=1$. At each level $d$, the cone segment
hierarchy consists of a set of cone domains $E_\gamma^d$ which,
together with certain related concepts, are defined following upon the
discussion concerning equation~\eqref{eq:defconedomain}. Thus, using
$n_{s,d}$, $n_{C,d}$ and $2 n_{C,d}$ level-$d$ interpolation intervals
in the $s$, $\theta$ and $\varphi$ variables, respectively, the
level-$d$ cone domains
\begin{equation*}
  E_\gamma^d = E_{\gamma_1}^{s; d} \times E_{\gamma_2}^{\theta; d} \times E_{\gamma_3}^{\varphi; d} \subset [0, \sqrt{3}/3] \times [0, \pi] \times [0, 2 \pi),
\end{equation*} 
and its Cartesian components $E_{\gamma_1}^{s; d}$,
$E_{\gamma_2}^{\theta; d}$ and $E_{\gamma_3}^{\varphi; d}$ (of sizes
$\Delta_{s,d}$, $\Delta_{\theta,d}$, and $\Delta_{\varphi,d}$,
respectively) are defined following the definition of $E_\gamma$
in~\eqref{eq:defconedomainproduct} and its Cartesian components,
respectively, for $n_s = n_{s,d}$ and $n_C=n_{C,d}$, and for
$\gamma = (\gamma_1, \gamma_2, \gamma_3) \in K_C^d \coloneqq \{1,
\ldots, n_{s,d} \} \times \{1, \ldots, n_{C,d} \} \times \{1, \ldots, 2 n_{C,d} \}$. Since the parametrization $\mathbf{x}$ in
\eqref{eq:defparametrizationins} depends on the box size $H=H_d$, and
thus, on the level $d$, the following notation for the $d$-level
parametrization is used
\begin{equation} \label{eq:parametrizationleveldependent}
    {\mathbf x}^d(s, \theta, \varphi) = {\mathbf x}(\frac{\sqrt{3} H_d}{2 r}, \theta, \varphi),
\end{equation}
which coincides with the expression $\eqref{eq:defparametrizationins}$
with $H = H_d$.  Using this parametrization, the level-$d$
origin-centered cone segments are then defined by
\begin{equation}\label{eq:int-seg-non-cent}
    C_\gamma^d = \{ \mathbf{x}^d(s, \theta, \varphi) \, : \, (s, \theta, \varphi) \in E_\gamma^d\} \quad \text{for all } \gamma \in K_C^d,
\end{equation}
with the resulting cone hierarchy
\begin{equation*}
    \mathcal{C} \coloneqq \{ C_\gamma^d : 1 \leq d \leq D, \, \gamma \in K_C^d \}.
\end{equation*}
The interpolation segments $C_{\mathbf{k}; \gamma}^d$ actually
used for interpolation of fields resulting from sources contained
within an individual level-$d$ box centered at the point
$x_\mathbf{k}^d$, are given by
\begin{equation}\label{eq:int-seg-cent}
C_{\mathbf{k}; \gamma}^d \coloneqq C_\gamma^d + x_\mathbf{k}^d \quad \text{for all } \gamma \in K_C^d \text{ and } \mathbf{k} \in K^d.
\end{equation}
An illustration of a two dimensional example of the cone segments and their
naming scheme can be found in Figure \ref{fig:Cones}. 

Unlike the box partitioning process, which starts from a single box
and proceeds from one level to the next by subdividing each parent box
into $2\times 2\times 2 = 8$ child boxes (with refinement factors
equal to two in each one of the Cartesian coordinate directions,
resulting in a number $8^{d-1}$ boxes at level $d$), the cone segment
partitioning approach proceeds iteratively downward, starting from the
two $d = (D+1)$ initial cone domains
\[
  E_{(1,1,1)}^{D+1} =  [0, \sqrt{3}/3] \times [0, \pi] \times [0, \pi)\quad\mbox{and}\quad E_{(1,1,2)}^{D+1} =  [0, \sqrt{3}/3] \times [0, \pi] \times [\pi, 2\pi).
\]
(The initial cone domains are only introduced as the initiators of the
partitioning process; actual interpolations are only performed from
cone domains $E_\gamma^d$ with $D\geq d\geq 1$.) Thus, starting at
level $d=D$ and moving inductively downward to $d=1$, the cone
domains at level $d$ are obtained, from those at level $(d+1)$, by
refining each level-$(d+1)$ cone domain by level-dependent refinement
factors $a_d$, i.e. the number of cone segments in radial and angular directions from one level to the next is taken as $n_{s,d-1} = n_{s,d}/a_d$ and $n_{C,d-1} = n_{C, d} / a_d$. As discussed in what follows, the
refinement factors are taken to satisfy $a_d=1$ or
$a_d=2$ for $D\geq d\geq 2$, but the initial refinement value
$a_{D+1}$ is an
arbitrary positive integer value.

The selection of the refinement factors $a_d$ for $(D+1)\geq d\geq 2$
proceeds as follows. The initial refinement factor $a_{D+1}$ is
chosen, via simple interpolation tests, so as to ensure that the
resulting level-$D$ values $\Delta_{s,D}$, $\Delta_{\theta,D}$ and
$\Delta_{\varphi,D}$ lead to interpolation errors below the prescribed
error tolerance (cf.  Theorem~\ref{theorem:errorestimatenested}). The
selection of refinement factors $a_d$ for $d=D,D-1,\dots, 2$, in turn,
also relies on Theorem~\ref{theorem:errorestimatenested} but, in this
case, in conjunction with Theorem~\ref{theorem:Error}---as discussed
in what follows in the case $\kappa H_d>1$ and, subsequently, for
$\kappa H_d\leq 1$. In the case $\kappa H_d>1$,
Theorem~\ref{theorem:Error} bounds the $n$-th derivatives of $g_S$ by
a multiple of $(\kappa H_d)^n$. It follows that, in this case, each
increase in derivative values that arise as the box size is, say,
doubled, can be offset, per Theorem~\ref{theorem:errorestimatenested},
by a corresponding decrease of the segment lengths $\Delta_{s, d}$,
$\Delta_{\theta, d}$ and $\Delta_{\varphi, d}$ by a factor of
one-half. Under this scenario, therefore, as the box-size $\kappa H_d$
is increased by a factor of two, the corresponding parent cone segment
is partitioned into eight child cone segments (using $a_d=2$)---in
such a way that the overall error bounds obtained via a combination of
Theorems~\ref{theorem:errorestimatenested} and~\ref{theorem:Error}
remain unchanged for all levels $d$, $1\leq d\leq
D$. Theorem~\ref{theorem:Error} also tells us that, in the
complementary case $\kappa H_d\leq 1$ (and, assuming that,
additionally, $2\kappa H_d\leq 1$), for each $n$, the $n$-th order
derivatives remain uniformly bounded as the acoustical box-size
$\kappa H_d$ varies. In this case it follows from
Theorem~\ref{theorem:errorestimatenested} that, as the box size is
doubled and the level $d$ is decreased by one, the error level is
maintained (at least as long as the $(d-1)$-level box size
$\kappa H_{d-1} = 2\kappa H_d$ remains smaller than one), without any
modification of the domain lengths $\Delta_{s,d}$, $\Delta_{\theta,d}$
and $\Delta_{\varphi,d}$. In such cases we set $a_d=1$, so that the
cone domains remain unchanged as the level transitions from $d$ to
$(d-1)$, while, as before, the error level is maintained. The special
case in which $\kappa H_d <1$ but $2\kappa H_d >1$ is handled by
assigning the refinement factors $a_d =2$ as in the $\kappa H_d> 1$
case.  Once all necessary cone domains $E_\gamma^d$ ($D\geq d\geq 1$)
have been determined, the cone segments $C_{\mathbf{k}; \gamma}^d$
actually used for interpolation around a given box
$B^{d}_\mathbf{k}\in\mathcal{B}$ are obtained
via~\eqref{eq:int-seg-non-cent}-\eqref{eq:int-seg-cent}. A
two-dimensional illustration of the multi-level cone segment structure
is presented in Figure~\ref{fig:conestructure}.
\begin{figure}
    \centering
    \begin{subfigure} {0.9\textwidth}
    \centering
    \includegraphics[width=0.5\textwidth]{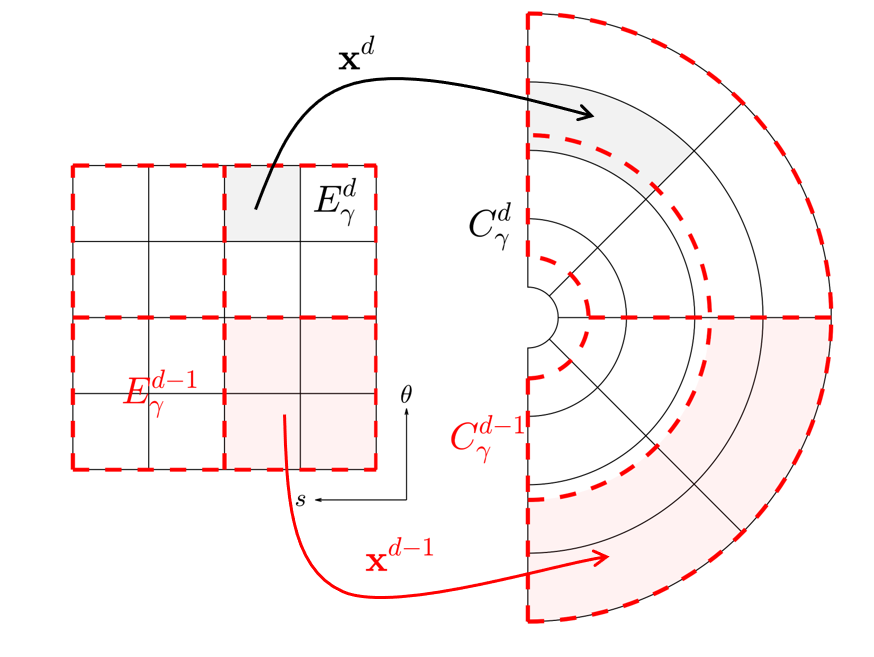}
    \caption{Two-dimensional illustration of the multi-level cone
      domains $E_\gamma^d$ and origin-centered cone segments
      $C_\gamma^d$ for two subsequent levels, shown in black and red,
      respectively.}
    \label{fig:conestructure1}
\end{subfigure}\\
\begin{subfigure} {0.9\textwidth}
    \centering
    \includegraphics[width=0.6\textwidth]{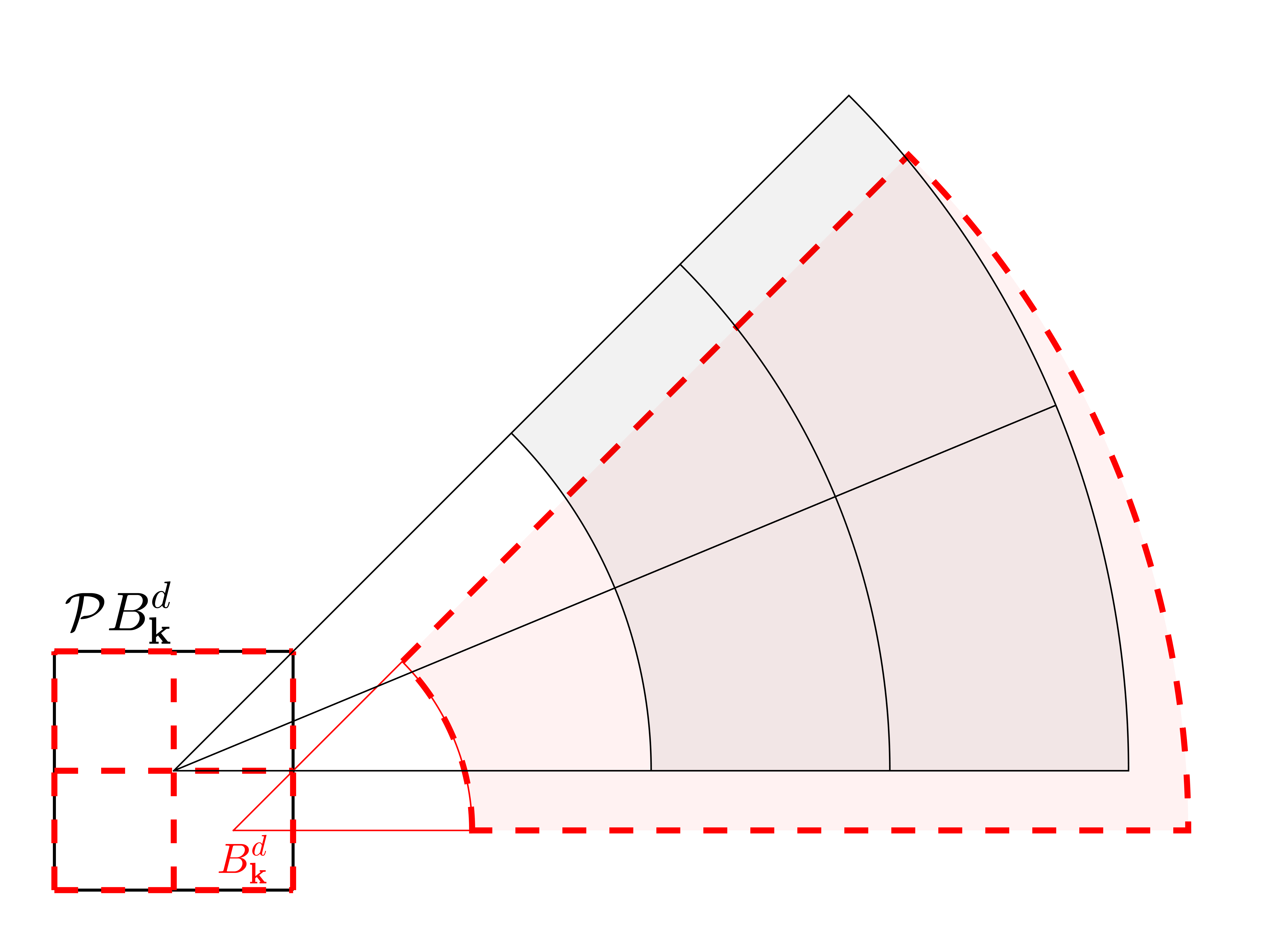}
    \caption{Two-dimensional illustration of box-centered cone
      segments, namely, a single $B_{\mathbf k}^d$-centered cone
      segment at level $d$ (in red) and the four (eight in three
      dimensions) corresponding $\mathcal{P} B_{\mathbf k}^d$-centered
      refined child cone segments at level $d-1$ depicted (in black).}
    \label{fig:conestructure2}
\end{subfigure}
\caption{Two-dimensional illustration of the hierarchical cone domain
  structure in $(s, \theta)$ space, and corresponding origin-centered
  and box-centered cone segments.}
    \label{fig:conestructure}
\end{figure}

In order to take advantage of these ideas, the IFGF algorithm
presented in subsequent sections relies on a set of concepts and
notations---including the box and cone segment structures
$\mathcal{B}$ and $\mathcal{C}$---that are introduced in what
follows. Using the notation~\eqref{eq:defbox}, the multi-index set
$K^d \coloneqq \{1, \ldots, 2^{d-1} \}^3$ (which enumerates the boxes
at level $d$, $d = 1, \ldots, D$) the initial box $B_\mathbbm{1}^{1}$
(equation~\eqref{eq:deffirstbox}), and the iteratively defined
level-$d$ box sizes and centers
\begin{equation}\label{eq:defboxsizesandcenters}
  H_d \coloneqq \frac{H_1}{2^{d-1}}, \quad x_{\mathbf k}^d \coloneqq x_\mathbbm{1}^1 -\frac{H_1}{2} \mathbbm{1} +  \frac{H_d}{2} (2 \mathbf{k} - \mathbbm{1}) \quad(\mathbf{k} \in K^d),
\end{equation}
the level-$d$ boxes and the octree $\mathcal{B}$ they bring
about are given by
    \begin{align*}
      B_{\mathbf{k}}^{d} \coloneqq B(x_{\mathbf{k}}^d, H_d)  \quad (\mathbf{k} \in K^d),\quad
      \mathcal{B} \coloneqq \{ B_{\mathbf{k}}^{d}\,  : \, d = 1, \ldots, D, \quad \mathbf{k} \in K^d \};
\end{align*}
note that, per equation \eqref{eq:defbox}, the boxes within the given
level $d$ are mutually disjoint. The field generated, as
in~\eqref{eq:definitionF}, by sources located at points within the box
$B_{\mathbf{k}}^{d}$ will be denoted by
\begin{equation}\label{eq:I_k}
    I_\mathbf{k}^d(x) \coloneqq \sum \limits_{x' \in B_\mathbf{k}^{d} \cap \Gamma_N} a(x') G(x, x') = G(x, x_\mathbf{k}^d) F_\mathbf{k}^d (x),  \qquad F_\mathbf{k}^d(x) \coloneqq \sum \limits_{x' \in B_\mathbf{k}^{d} \cap \Gamma_N} a(x') g_\mathbf{k}^d(x, x'),
\end{equation}
where $a(x')$ denotes the coefficient in sum \eqref{eq:fieldboxes}
associated with the point $x'$ and $g_\mathbf{k}^d = g_S$ the analytic
factor as in \eqref{eq:factor} centered at
$x_\mathbf{k}^d$. The octree structure
$\mathcal{B}$ coincides with the one used in Fast Multipole Methods
(FMMs)~\cite{Gumerov2004, 2003FMMLexingKernelIndependent,
  2006FMMRokhlin, 2007DirectionalFMMLexing}.

Typically only a small fraction of the the boxes on a given level $d$
intersect the discrete surface $\Gamma_N$; the set of all such {\em
  level-$d$ relevant boxes} is denoted by
\begin{equation*}
  \mathcal{R}_B^d \coloneqq \{B_\mathbf{k}^{d} \in \mathcal{B} \, : \, \mathbf{k} \in K^d,  B_\mathbf{k}^{d} \cap \Gamma_N \neq \emptyset \}.
\end{equation*}
Clearly, for each $d = 1, \ldots, D$ there is a total of
$N_B^d \coloneqq 2^{d-1}$ level-$d$ boxes in each coordinate
direction, for a total of $(N_B^d)^3$ level-$d$ boxes, out of which
only $\mathcal{O}\left((N_B^d)^2\right)$ are relevant boxes as
$d \to \infty$---a fact that plays an important role in the evaluation
of the computational cost of the IFGF method. The set
$\mathcal{N} B_\mathbf{k}^{d} \subset \mathcal{R}^d_B$ of boxes {\em
  neighboring} a given box $B_\mathbf{k}^{d}$ is defined as the set
of all relevant level-$d$ boxes $B_\mathbf{a}^{d}$ such that
$\mathbf{a}$ differs from $\mathbf{k}$, in absolute value, by an
integer not larger than one, in each one of the three coordinate
directions: $\norm{\mathbf{a} - \mathbf{k}}_\infty \leq 1$. The {\em
  neighborhood} $\mathcal{U} B_\mathbf{k}^{d} \subset \mathbb{R}^3$
of $B_\mathbf{k}^{d}$ is defined by
\begin{equation}
  \mathcal{U}B_\mathbf{k}^{d} \coloneqq \bigcup \limits_{B \in \mathcal{N}B_\mathbf{k}^{d}} B,\quad\mbox{where,}\quad     \mathcal{N}B_\mathbf{k}^{d} \coloneqq \left \{B_\mathbf{a}^{d} \in \mathcal{R}_B^d \, : \, \norm{\mathbf{a} - \mathbf{k}}_\infty \leq 1 \right\}.
\end{equation} 

An important aspect of the proposed hierarchical algorithm concerns
the application of IFGF interpolation methods to obtain field values
for groups of sources within a box $B_\mathbf{k}^{d}$ at points
farther than one box away (and thus outside the neighborhood of
$B_\mathbf{k}^{d}$, where either direct summation ($d=D$) or
interpolation from $(d+1)$-level boxes ($(D-1)\geq d\geq 1$) is
applied), but that are {\color{MyGreen}not sufficiently far} from the source box
$B_\mathbf{k}^{d}$ to be handled by the next level, $(d-1)$, in the
interpolation hierarchy, and which {\color{MyGreen}must therefore be handled} as part
of the $d$-level interpolation process. The associated {\em cousin box}
concept is defined in terms of the hierarchical parent-child
relationship in the octree $\mathcal{B}$, wherein the {\em parent box
} $\mathcal{P}B_\textbf{k}^{d} \in \mathcal{R}^{d-1}_B$ and the set
$\mathcal{Q}B_\mathbf{k}^{d} \subset \mathcal{R}_B^{d+1}$ of {\em
  child boxes } of the box $B_\textbf{k}^{d}$ are defined by
\begin{align*}
  \mathcal{P}B_\textbf{k}^{d} &\coloneqq B^{d-1}_\mathbf{a}\quad (\mathbf{a} \in K^{d-1}) \quad \text{provided} \quad  B_\mathbf{k}^{d} \subset B^{d-1}_\textbf{a}, \quad\mbox{and}\\
  \mathcal{Q} B_\mathbf{k}^{d} &\coloneqq \left \{B_\mathbf{a}^{d+1} \in \mathcal{R}_B^{d+1} \, : \, \mathcal{P}B_\mathbf{a}^{d+1} = B_\mathbf{k}^{d} \text{ } \right \}.
\end{align*}
This leads to the notion of {\em cousin boxes},
namely, non-neighboring $(d+1)$-level boxes which are nevertheless
children of neighboring $d$-level boxes. The {\em cousin boxes}
$\mathcal{M}B_\mathbf{k}^{d}$ and associated {\em cousin point sets}
$\mathcal{V}B_\mathbf{k}^{d}$ are given by
\begin{equation} \label{eq:defcousinboxes}
  \mathcal{M}B_\mathbf{k}^{d} \coloneqq \left( \mathcal{R}_B^d \setminus \mathcal{N}B_\mathbf{k}^{d} \right) \cap \mathcal{Q}\mathcal{N}\mathcal{P}B_\mathbf{k}^{d}\quad\text{and}\quad
  \mathcal{V}B_\mathbf{k}^{d} \coloneqq \bigcup \limits_{B \in \mathcal{M}B_\mathbf{k}^{d}} B.
\end{equation}
The concept of cousin boxes is illustrated in
Figure~\ref{fig:childrenofparentsneighbours} for a two-dimensional
example, wherein the cousins of the box $B_{(2, 1)}^3$ are shown in
gray. {\color{red}Note that, by definition, cousin boxes of side $H$
  are at a distance that is, say, no larger than $3H$ from each
  other. This implies that the number of cousin boxes of each box is
  bounded by a constant ($6^3 - 3^3 = 189$) independent of the level
  $d$ and the number $N$ of surface discretization points.}

A related set of concepts concerns the hierarchy of cone domains and
cone segments. As in the box hierarchy, only a small fraction of the
cone segments are {\em relevant} within the algorithm, which leads to
the following definitions of {\em cone segments
  $\mathcal{R}_C B_\mathbf{k}^{d}$ relevant for a box
  $B_\mathbf{k}^{d}$}, as well as the set $\mathcal{R}_C^d$ of all
{\em relevant cone segments at level $d$}. A level-$d$ cone segment
$C_{\mathbf{k}; \mathbf{\gamma}}^d$ is recursively defined to be
relevant to a box $B_\mathbf{k}^{d}$ if either, (i)~It includes a
surface discretization point on a cousin of $B_\mathbf{k}^{d}$, or if,
(ii)~It includes a point of a relevant cone segment associated with the
parent box $\mathcal{P}B_\textbf{k}^{d}$. In other words,
\begin{align}
  \mathcal{R}_C B_\mathbf{k}^{d} &\coloneqq \left\{ C_{\mathbf{k}; \mathbf{\gamma}}^d \, : \, \gamma \in K_C^d\, , \, C_{\mathbf{k}; \mathbf{\gamma}}^d \cap \Gamma_N \cap \mathcal{V} B_\mathbf{k}^{d} \neq \emptyset \text{ or } C_{\mathbf{k}; \mathbf{\gamma}}^d \cap \left( \bigcup \limits_{C \in \mathcal{R}_C \mathcal{P} B_\mathbf{k}^{d}}  C \right) \neq \emptyset \right \}\quad \text{and} \nonumber \\
  \mathcal{R}_C^d &\coloneqq \{ C_{\mathbf{k}; \mathbf{\gamma}}^d \in \mathcal{R}_C B_\mathbf{k}^{d} \, :\, \gamma \in K_C^d\, , \mathbf{k}\in K^d\, \mbox{and}\, B_\mathbf{k}^d \in \mathcal{R}_B^{d} \}. \label{eq:defallrelevantconesegments}
\end{align}
Clearly, whether a given cone segment is relevant to a given box on a
given level $d$ depends on the relevant cone segments on the parent
level $d-1$, so that determination of all relevant cone segments can
be achieved by means of a single sweep through the data structure,
from $d = 1$ to $d = D$.

It is important to note that, owing to the placement of the
discretization points on a two-dimensional surface $\Gamma$ in
three-dimensional space, the number of relevant boxes is reduced by a
factor of $1/4$ as the level is advanced from level $(d+1)$ to level
$d$ (at least, asymptotically as $d\to\infty$). Similarly, under the
cone segment refinement strategy proposed in view of
Theorem~$\ref{theorem:Error}$, the overall number of relevant cone
segments per box is increased by a factor of four as the box size is
doubled, so that the total number of relevant cone segments remains
essentially constant as $D$ grows:
$|\mathcal{R}_C^d| \sim |\mathcal{R}_C^{d+1}|$ for all
$d = 1, \ldots, D-1$ as $D\to\infty$, where $|\mathcal{R}_C^d|$
denotes the total number of relevant cone segments on level $d$.

As discussed in Section \ref{subsec:interpolation}, the cone segments
$C_{\mathbf{k}; \mathbf{\gamma}}^d$, which are part of the IFGF
interpolation strategy, are used to effect piece-wise Chebyshev
interpolation in the spherical coordinate system
$(s, \theta, \varphi)$. The interpolation approach, which is based on
use of discrete Chebyshev expansions, relies on use of a set
$\mathcal{X} C_{\mathbf{k}; \mathbf{\gamma}}^d$ for each relevant cone
segment $C_{\mathbf{k}; \mathbf{\gamma}}$ containing
$P = P_s \times (P_\text{ang})^2$ Chebyshev {\em interpolation  points} for all ${\mathbf k} \in K^d$ and $\gamma \in K_C^d$:
\begin{equation}\label{eq:interp_pts}
    \mathcal{X} C_{\mathbf{k}; \mathbf{\gamma}}^d = \{ x \in C_{\mathbf{k}; \mathbf{\gamma}}^d \, : \, x = {\mathbf x}^d(s_k, \theta_i, \varphi_j) + x_\mathbf{k}^d  \quad 1 \leq k \leq P_s, 1 \leq i \leq P_{\text{ang}}, 1 \leq j \leq P_{\text{ang}} \},
\end{equation}
where $s_k$, $\theta_i$ and $\varphi_j$ denote Chebyshev nodes in the
intervals $E_{\gamma_1}^{s; d}$, $E_{\gamma_2, \gamma_3}^{\theta; d}$
and $E_{\gamma_3}^{\varphi; d}$, respectively, and where
$x_\mathbf{k}^d$, which is defined
in~\eqref{eq:defboxsizesandcenters}, denotes the center of the box
$B_\mathbf{k}^d$. A two-dimensional illustration of $3 \times 3$
Chebyshev interpolation points within a single cone segment can be
found in Figure \ref{fig:Interpolationpoints}.

\begin{figure}
  \centering \subcaptionbox{A scatterer, in blue, and three levels of
    the associated box tree, with the highest level box $B^1_{(1, 1)}$
    in green, four $d=2$ level boxes in red, and sixteen $d=3$ level
    boxes, in
    black. \label{fig:Setup}}[0.485\linewidth]{\includegraphics[width=0.4\textwidth]{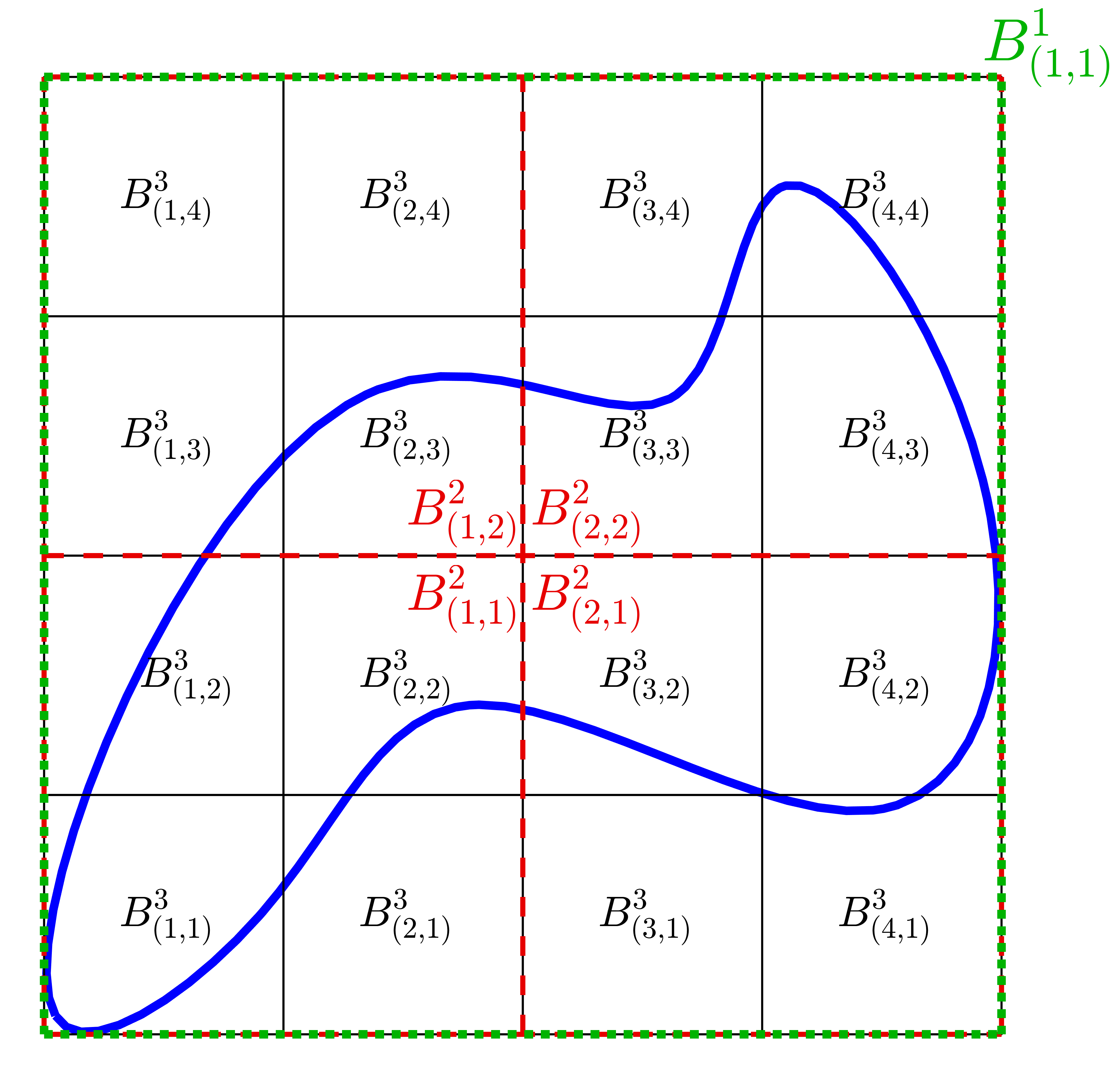}}
  \hspace{0.01\linewidth}
  \subcaptionbox{Cousins (non-neighboring children of neighbors of parents) of the box $B_{(2, 1)}^3$, in gray.  \label{fig:childrenofparentsneighbours}}[0.485\linewidth]{\includegraphics[width=0.35\textwidth]{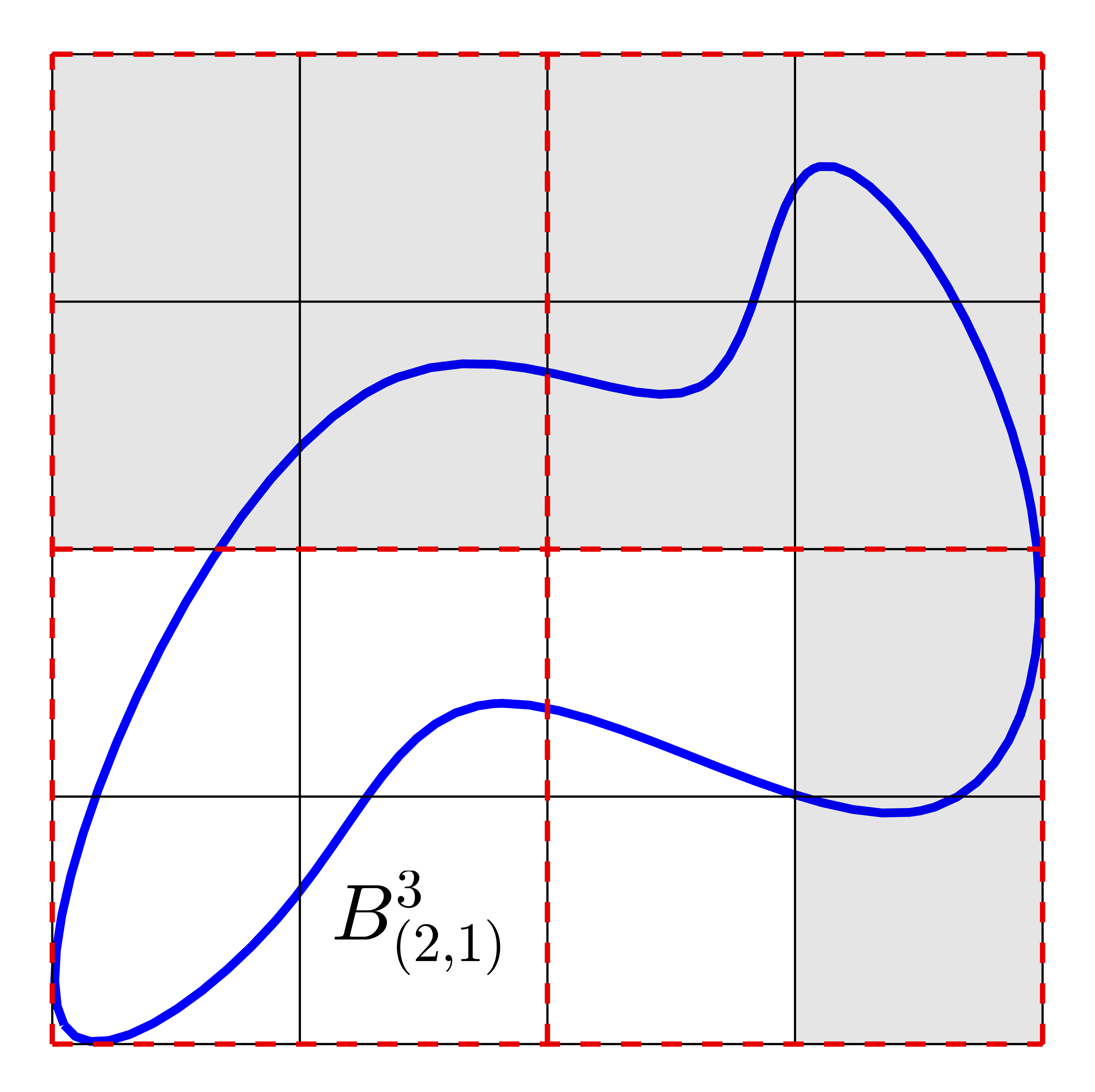}} \\
  \subcaptionbox{Illustrative sketch of the naming scheme used for
    box-centered cone segments $C_{\mathbf{k}; \mathbf{\gamma}}^d$
    (based on the level-3 box $B_{(1,
      1)}^3$). \label{fig:Cones}}[0.485\linewidth]{\includegraphics[width=0.48\textwidth]{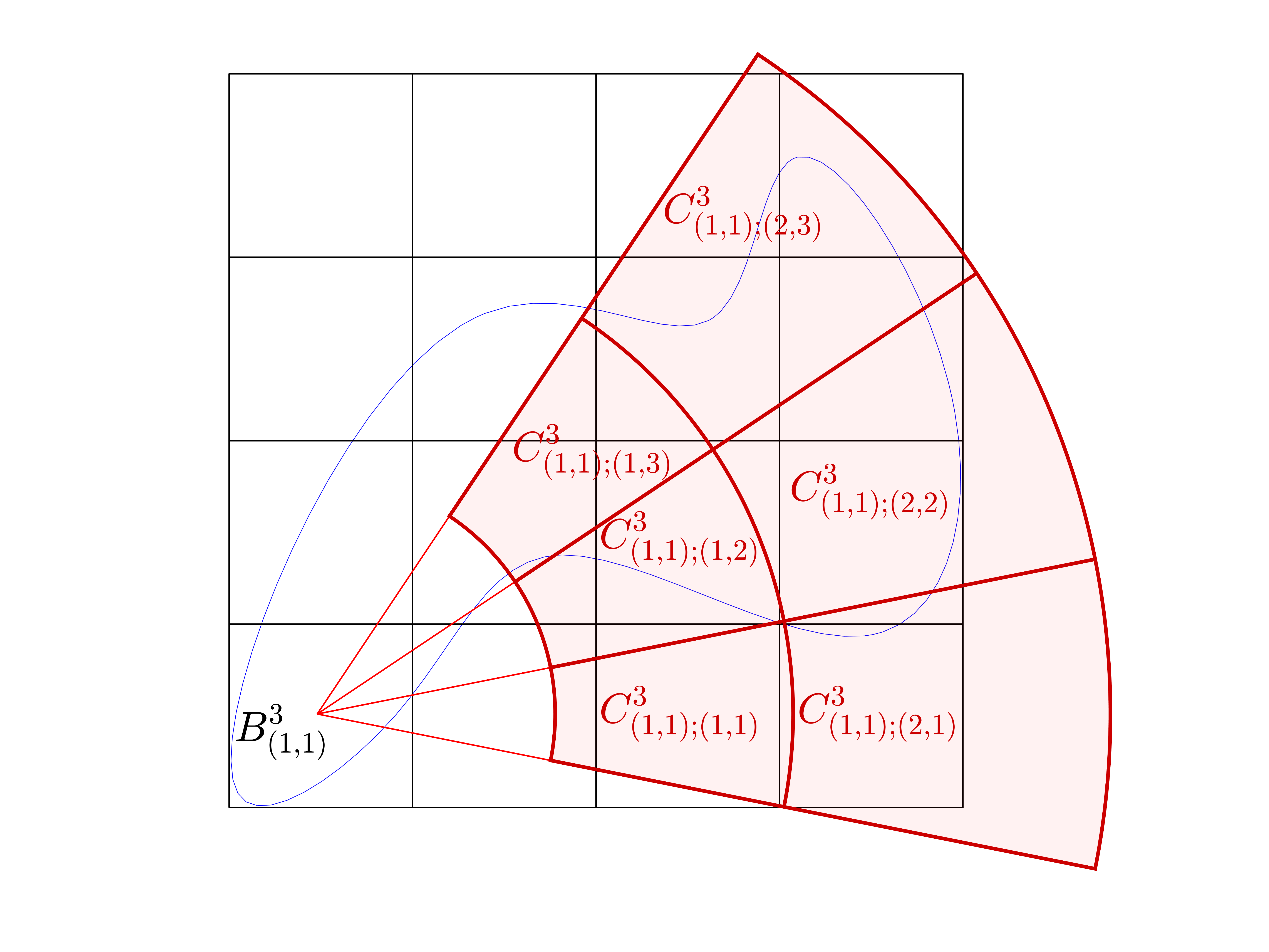}}
  \hspace{0.01\linewidth} \subcaptionbox{ $3 \times 3$ Chebyshev
    interpolation points associated with the cone segment
    $C_{(1, 1);(2,
      2)}^3$. \label{fig:Interpolationpoints}}[0.485\linewidth]{\includegraphics[width=0.48\textwidth]{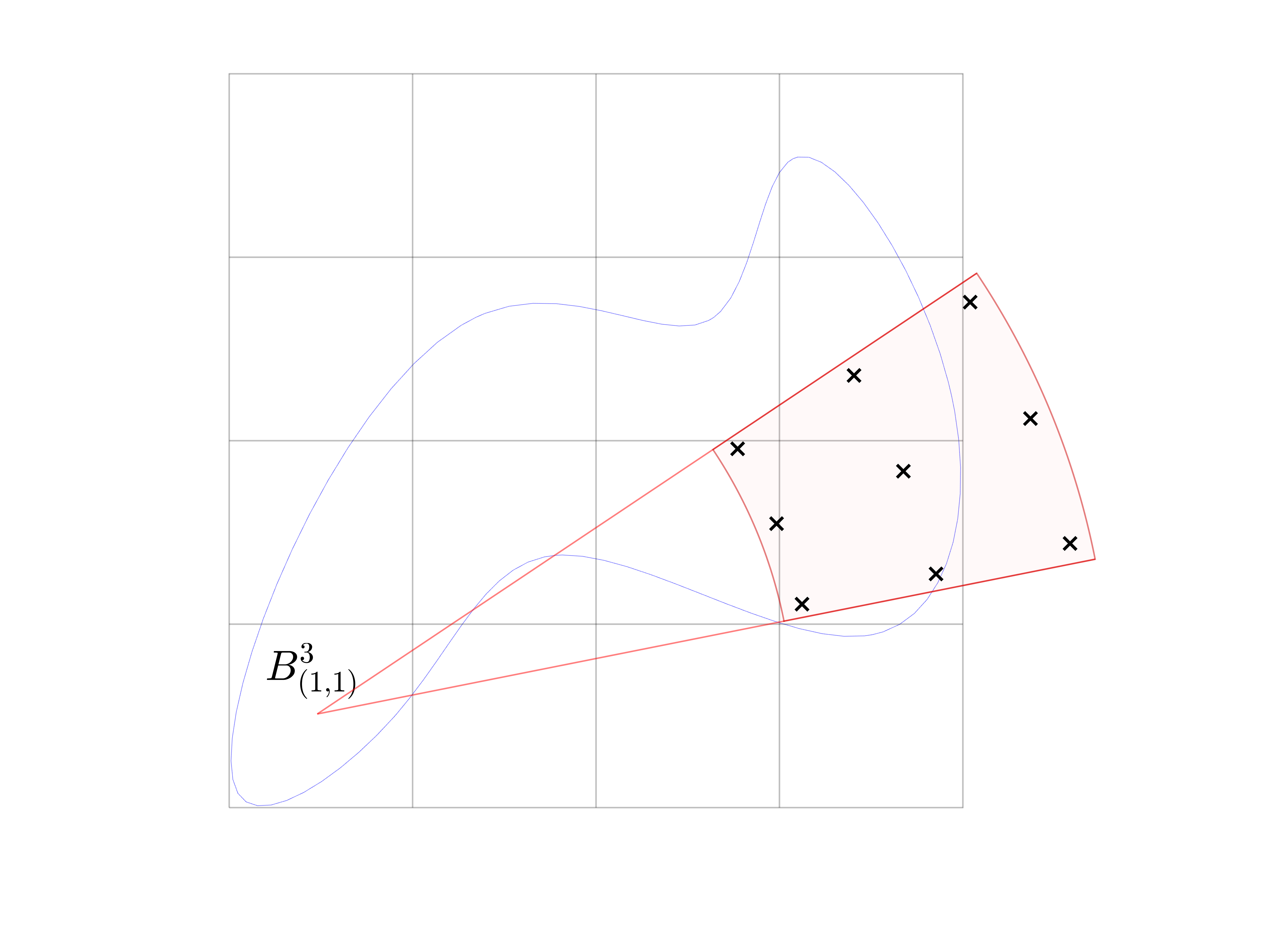}}
    \caption{Two-dimensional illustration of boxes, neighbors, cone
      segments and interpolation
      points.\label{fig:Interpolationpointvarious}}
\end{figure}

\subsubsection{Narrative Description of the
  Algorithm} \label{sec:algdescription} {\color{red} The IFGF
  algorithm consists of two main components, namely, precomputation
  and operator evaluation. The precomputation stage, which is
  performed only once prior to a series of operator evaluations (that
  may be required e.g. as part of an iterative linear-algebra solver
  for a discrete operator equation), initializes the box and cone
  structures and, in particular, it flags the relevant boxes and cone
  segments. The relevant boxes at each level $d$ ($1 \leq d \leq D$)
  are determined, at a cost of $\mathcal{O}(N)$ operations, by
  evaluation of the integer parts of the quotients of the coordinates
  of each point $x \in \Gamma_N$ by the level-$d$ box-size
  $H_d$---resulting in an overall cost of $\mathcal{O}(N\log N)$
  operations for the determination of the relevant boxes at all
  $D\sim\log N$ levels.  Turning to determination of relevant cone
  segments, we first note that, since there are no cousin boxes for
  any box in either level $d = 1$ (there is only one box in this
  level) or level $d = 2$ (all boxes are neighbours in this level), by
  definition~\eqref{eq:defallrelevantconesegments}, there are also no
  relevant cone segments in levels $d = 1$ and $d=2$. To determine the
  relevant cone segments at level $d = 3$, in turn, the algorithm
  loops over all relevant boxes $B_\mathbf{k}^3 \in \mathcal{R}_B^3$,
  and then over all cousin target points
  $x \in \Gamma_N \cap \mathcal{V}B_\mathbf{k}^3$ of $B_\mathbf{k}^3$,
  and it labels as a relevant cone segment the unique cone segment
  which contains $x$. (Noting that, per
  definition~\eqref{eq:defconesegments}, the cone segments associated
  with a given relevant box are mutually disjoint, the determination
  of the cone segment which contains the cousin point $x$ is
  accomplished at $\mathcal{O}(1)$ cost by means of simple arithmetic
  operations in spherical coordinates.) For the consecutive levels
  $d = 4, \ldots, D$, the same procedure as for level $d = 3$ is used
  to determine the relevant cone segments arising from cousin
  points. In contrast to level $d = 3$, however, for levels
  $d = 4, \ldots, D$ the relevant cone segments associated with the
  parent box $\mathcal{P} B_\mathbf{k}^d\in \mathcal{R}_B^{d-1}$ of a
  relevant box $B_\mathbf{k}^d \in \mathcal{R}_B^d$ also play a role
  in the determination of the relevant cone segments of the box
  $B_\mathbf{k}^d$. More precisely, for $d\geq 4$ the algorithm
  additionally loops over all relevant cone segments
  $C \in \mathcal{R}_C \mathcal{P} B_\mathbf{k}^d$ centered at the
  parent box and all associated interpolation points
  $x \in \mathcal{X} C$ and, as with the cousin points, flags as
  relevant the unique cone segment $C_\mathbf{k}^d$ associated with
  the box $B_\mathbf{k}^d$ that includes the interpolation point $x$.}

Once the box and cone segment structures $\mathcal{B}$ and
$\mathcal{C}$ have been initialized, and the corresponding sets of
relevant boxes and cone segments have been determined, the IFGF
algorithm proceeds to the operator evaluation stage. The algorithm
thus starts at the initial level $D$ by evaluating directly the
expression~\eqref{eq:I_k} with $d=D$ for the analytic factor
$F_\mathbf{k}^D(x)$ (which contains contributions from all point
sources contained in $B_\mathbf{k}^D$) for all level-$D$ relevant
boxes $B_\mathbf{k}^D \in \mathcal{R}_B^D$ at all the surface
discretization points $x \in \mathcal{U} B_\mathbf{k}^D \cap \Gamma_N$
neighboring $B_\mathbf{k}^D$, as well as all points $x$ in the set
$\mathcal{X} C_{\mathbf{k}; \mathbf{\gamma}}^D$ (equation
\eqref{eq:interp_pts}) of all spherical-coordinate interpolation
points associated with all relevant cone segments
$C_{\mathbf{k}; \mathbf{\gamma}}^D$ emanating from
$B_\mathbf{k}^D$. All the associated level-$D$ spherical-coordinate
interpolation polynomials are then obtained through a direct
computation of the coefficients
~\eqref{eq:defchebyshevcoeffsandpoints}, and the stage $D$ of the
algorithm is completed by using some of those interpolants to
evaluate, for all level-$D$ relevant boxes $B_\mathbf{k}^D$, the
analytic factor $F_\mathbf{k}^D(x)$ through evaluation of the sum
\eqref{eq:defchebyshevinterpooperator}, and, via multiplication by the
centered factor, the field $I_\mathbf{k}^D(x)$ at all cousin target
points $x\in \Gamma_N\cap \mathcal{V}B_\mathbf{k}^D$. (Interpolation
polynomials corresponding to regions farther away than cousins, which
are obtained as part of the process just described, are saved for use
in the subsequent levels of the algorithm.)  Note that, under the
cousin condition $x \in \Gamma_N\cap \mathcal{V} B_\mathbf{k}^D$, the
variable $s$ takes values on the compact subset $[0,\eta]$
($\eta = \sqrt{3}/3<1$) of the analyticity domain $0\leq s<1$
guaranteed by Corollary~\ref{corol:sufficientconditions}, and, thus,
the error-control estimates provided in Theorem~\ref{theorem:Error}
guarantee that the required accuracy tolerance is met at the
cousin-point interpolation step. {\color{MyGreen} Additionally, each
  cousin target point $x \in \Gamma_N\cap \mathcal{V} B_\mathbf{k}^D$
  lies within exactly one relevant cone segment
  $C_{\mathbf{k}; \gamma}^D \in \mathcal{R}_C B_{\mathbf{k}}^D$. It
  follows that the evaluation of the analytic factors~\eqref{eq:I_k}
  at a point $x$ for all source boxes $B_\mathbf{k}^D$ for which $x$
  is a level-$D$ cousin is an $\mathcal{O}(1)$ operation---since each
  surface discretization point $x \in \Gamma_N$ is a cousin point for
  no more than $189 = 6^3 - 3^3$ boxes (according to Definition
  \eqref{eq:defcousinboxes} and the explanation following
  it). Therefore, the evaluation of analytic-factor cousin-box
  contributions at all $N$ surface discretization points requires
  $\mathcal{O}(N)$ operations.} This completes the level-$D$ portion
of the IFGF algorithm.

At the completion of the level-$D$ stage the field $I_\mathbf{k}^D(x)$
generated by each relevant box $B_\mathbf{k}^D$ has been evaluated at
all neighbor and cousin surface discretization points
$x \in \Gamma_N \cap \left(\mathcal{U}B_\mathbf{k}^D \cup \mathcal{V}
  B_\mathbf{k}^D\right)$, but field values at surface points farther
away from sources,
$x \in \Gamma_N \setminus \left(\mathcal{U}B_\mathbf{k}^D \cup
  \mathcal{V}B_\mathbf{k}^D\right)$, still need to be obtained; these
are produced at stages $d = D-1, \ldots, 3$. (The evaluation process
is indeed completed at level $d=3$ since by construction we have
$\mathcal{U} B_\mathbf{k}^3 \cup \mathcal{V} B_\mathbf{k}^3 \supset
\Gamma_N$ for any $\mathbf{k}\in K^3$.) For each relevant box
$B_\mathbf{k}^d \in\mathcal{R}_B^d$, the level-$d$ algorithm
($(D-1)\geq d\geq 3$) proceeds by utilizing the previously calculated
$(d+1)$-level spherical-coordinate interpolants for each one of the
relevant children of $B_\mathbf{k}^d$, to evaluate the analytic factor
$F_\mathbf{k}^d(x)$ generated by sources contained within
$B_\mathbf{k}^d$ at all points $x$ in all the sets
$\mathcal{X} C_{\mathbf{k}; \gamma}^d$ (equation
\eqref{eq:interp_pts}) of spherical-coordinate interpolation points
associated with {\color{MyGreen} relevant} cone segments
$C_{\mathbf{k}; \gamma}^d {\color{MyGreen} \in \mathcal{R}_C
  B_\mathbf{k}^d }$ emanating from $B_\mathbf{k}^d$, which are then
used to generate the level-$d$ Chebyshev interpolants through
evaluation of the sums~\eqref{eq:defchebyshevcoeffsandpoints}. The
level-$d$ stage is then completed by using some of those interpolants
to evaluate, for all level-$d$ relevant boxes $B_\mathbf{k}^d$, the
analytic factor $F_\mathbf{k}^d(x)$ and, by multiplication with the
centered factor, the field $I_\mathbf{k}^d(x)$, at all cousin target
points $x\in \Gamma_N\cap \mathcal{V}B_\mathbf{k}^d$. {\color{MyGreen}
  As in the level $D$ case, these level-$d$ interpolations are
  performed at a cost of $\mathcal{O}(N)$ operations for all surface
  discretization points---since, as in the level-$D$ case, each
  surface discretization point (i) Is a cousin target point of
  $\mathcal{O}(1)$ boxes, and (ii) Is contained within one cone segment
  per cousin box.} This completes the algorithm. 

{\color{MyGreen} As indicated in the Introduction, the IFGF method
  does not require a downward pass through the box tree structure---of
  the kind required by FMM approaches---to evaluate the field at the
  surface discretization points. Instead, as indicated above, in the
  IFGF algorithm the surface-point evaluation is performed as part of
  a single (upward) pass throught the tree structure, with increasing
  box sizes $H_d$ and decreasing values of $d$, as the interpolating
  polynomials associated with the various relevant cone segments are
  evaluated at cousin surface points. Thus, the IFGF approach
  aggregates contributions arising from large numbers of point
  sources, but, unlike the FMM, it does so using large number of
  interpolants of a low (and fixed) degree over decreasing angular and
  radial spans, instead of using expansions of increasingly large
  order over fixed angular and radial
  spans. 
}

It is important to note that, in order to achieve the desired
acceleration, the algorithm evaluates analytic factors
$F_\mathbf{k}^d(x)$ arising from a level-$d$ box $B_\mathbf{k}^d$,
whether at interpolation points $x$ in the subsequent level, or for
cousin surface discretization points $x$, by relying on interpolation
based on (previously computed) interpolation polynomials associated
with the $(d+1)$-level relevant children boxes of $B_\mathbf{k}^d$,
instead of directly evaluating $I_\mathbf{k}^d(x)$ using
equation~\eqref{eq:I_k}. In particular, all interpolation points
within relevant cone segments on level $d$ are also targets of the
interpolation performed on level $(d+1)$. Evaluation of interpolant at
surface discretization points $x\in\Gamma_N$, on the other hand, are
restricted to cousin surface points: evaluation at all points farther
away are deferred to subsequent larger-box stages of the algorithm.

Of course, the proposed interpolation strategy requires the creation,
for each level-$d$ relevant box $B_\mathbf{k}^d$, of all level-$d$
cone segments and interpolants necessary to cover both the cousin
surface discretization points as well as all of the interpolation
points in the relevant cone segments on level $(d-1)$. We emphasize
that the interpolation onto interpolation points requires a
re-centering procedure consisting of multiplication by the level $d$
centered factors, and division by corresponding level-$(d-1)$ centered
factors (cf equation~\eqref{eq:I_k}). {\color{MyGreen} We note that,
  in particular, this re-centering procedure (whose need arises as a
  result of the algorithm's reliance on the coordinate transformation
  \eqref{eq:parametrizationleveldependent} but re-centered at the
  $d$-level cube centers for varying values of $d$) causes the set of
  the children cone segments not to be geometrically contained within
  the corresponding parent cone segment (cf. Figure
  \ref{fig:conestructure2}).} The procedure of interpolation onto
interpolation points, which is, in fact, {\color{MyGreen} an iterated}
Chebyshev interpolation method, does not result in error
amplification---as it follows from a simple variation of Theorem
\ref{theorem:errorestimatenested}.

Using the notation in Section \ref{sec:algnotation}, the IFGF
algorithm described above is summarized in its entirety in what
follows.
\begin{itemize}
\item Initialization of relevant boxes and relevant cone segments.
    \begin{itemize}
    \item Determine the sets $\mathcal{R}_B^d$ and $\mathcal{R}_C^d$
      for all $d = 1, \ldots, D$.
\end{itemize}      
    \item Direct evaluations on level $D$.
    \begin{itemize}
    \item For every $D$-level box $B_\mathbf{k}^D \in \mathcal{R}_B^D$
      evaluate the analytic factor $F_\mathbf{k}^D(x)$ generated by point
      sources within $B_\mathbf{k}^D$ at all neighboring surface
      discretization points
      $x \in \Gamma_N \cap \mathcal{U} B_\mathbf{k}^D$ by direct
      evaluation of equation~\eqref{eq:I_k}.
    \item For every $D$-level box $B_\mathbf{k}^D \in \mathcal{R}_B^D$
      evaluate the analytic factor $F_\mathbf{k}^D(x)$ at all
      interpolation points
      $x \in \mathcal{X} C_{\mathbf{k}; \mathbf{\gamma}}^D$ for all
      $C_{\mathbf{k}; \mathbf{\gamma}}^D \in \mathcal{R}_C
      B_\mathbf{k}^D$.
    \end{itemize}
  \item Interpolation, for $d = D, \ldots, 3$.
	\begin{itemize}
	\item For every every box $B_\mathbf{k}^d$ evaluate the field
          $I_\mathbf{k}^d(x)$ (equation~\eqref{eq:I_k}) at every
          surface discretization point $x$ within the cousin boxes of
          $B_\mathbf{k}^d$,
          $x \in \Gamma_N \cap \mathcal{V}B_\mathbf{k}^d$, by
          interpolation of $F_\mathbf{k}^d$ and multiplication by the
          centered factor $G(x, x_\mathbf{k}^d)$.
	\item For every every box $B_\mathbf{k}^d$ determine the
          parent box $B_\mathbf{j}^{d-1} = \mathcal{P} B_\mathbf{k}^d$
          and, by way of interpolation of the analytic factor
          $F_\mathbf{k}^d$ and re-centering by the smooth factor
          $G(x, x_\mathbf{k}^d)/ G(x, x_\mathbf{j}^{d-1})$, obtain the
          values of the parent-box analytic factors
          $F_\mathbf{j}^{d-1}$ at all level-$(d-1)$ interpolation
          points corresponding to $B_\mathbf{j}^{d-1}$---that is to
          say, at all points
          $x \in \mathcal{X} C_{\mathbf{j}; \mathbf{\gamma}}^{d-1}$
          for all
          $C_{\mathbf{j}; \mathbf{\gamma}}^{d-1} \in \mathcal{R}_C
          B_\mathbf{j}^{d-1}$ (Note: the contributions of all the
          children of $B_\mathbf{j}^{d-1}$ need to be accumulated at
          this step.)
	\end{itemize}
\end{itemize}

The corresponding pseudo code, Algorithm \ref{alg:ifgf2}, is presented
in the following section.

\subsubsection{Pseudo-code and Complexity} \label{sec:algpseudocode}
\begin{algorithm}
\begin{algorithmic}[1]
  \State \textbackslash \textbackslash Initialization.
  \For{$d = 1, \ldots, D$} \label{algstate:loopinitialization}
  \State Determine relevant boxes $\mathcal{R}^d_B$ and cone segments $\mathcal{R}_C^d$. \label{algstate:DetermineRelevantStuff} 
  \EndFor 
  \State 
  \State \textbackslash \textbackslash Direct evaluations on the lowest level. 
  \For{$B_\mathbf{k}^D \in \mathcal{R}_B^D$} \label{algstate:looprelevantboxeslevelD} 
  \For{$x \in \mathcal{U} B^D_\mathbf{k} \cap \Gamma_N$} \label{algstate:loopneighbourpointslevelD}
  \Comment{Direct evaluations onto neighboring surface points} 
  \State Evaluate $I_\mathbf{k}^D(x)$ 
  \EndFor 
  \For{$C_{\mathbf{k}; \mathbf{\gamma}}^D \in \mathcal{R}_C B_\mathbf{k}^D$} \label{algstate:looprelevantconeslevelD} \Comment{Evaluate $F$ on all relevant interpolation points} 
  \For{$x \in \mathcal{X} C_{\mathbf{k};\mathbf{\gamma}}^D$} \label{algstate:loopinterpointslevelD}
  \State Evaluate and store $F_\mathbf{k}^D(x)$. \EndFor 
  \EndFor 
  \EndFor
  \State
  \State \textbackslash \textbackslash Interpolation onto surface discretization points and parent
  interpolation points.
  \For{$d = D, \ldots, 3$}\label{algstate:loopd}
  \For{$B_\mathbf{k}^d \in \mathcal{R}_B^d$} \label{algstate:looprelevantboxes} 
  \For{$x \in \mathcal{V} B_\mathbf{k}^d \cap \Gamma_N$} \label{algstate:loopnearestneighbouringsurfacepoints}
  \Comment{Interpolate at cousin surface points} 
  \State {\color{red} Evaluate} $I_\mathbf{k}^d(x)$ by interpolation \label{algstate:interpolationtosurfacepoints}
  \EndFor 
  \If {$d > 3$}
  \Comment{Evaluate $F$ on parent interpolation points} \State Determine parent $B_\mathbf{j}^{d-1} = \mathcal{P} B_\mathbf{k}^d$
  \For{$C_{\mathbf{j}; \mathbf{\gamma}}^{d-1} \in \mathcal{R}_C B_\mathbf{j}^{d-1}$} \label{algstate:looprelevantcones} 
  \For{$x \in \mathcal{X} C_{\mathbf{j};\mathbf{\gamma}}^{d-1}$} \label{algstate:loopinterppoints}
  \State Evaluate and add $F_\mathbf{k}^d(x) G(x, x_\mathbf{k}^d)/ G(x, x_\mathbf{j}^{d-1})$
  \EndFor 
  \EndFor 
  \EndIf 
  \EndFor 
  \EndFor
  \caption{IFGF Method}
\label{alg:ifgf2}
\end{algorithmic}
\end{algorithm}

As shown in what follows, under the assumption, natural in the surface
scattering context assumed in this paper, that the wavenumber $\kappa$
does not grow faster than $\mathcal{O} (\sqrt{N})$, the IFGF
Algorithm~\ref{alg:ifgf2} runs at an asymptotic computational cost of
$\mathcal{O}(N \log N)$ operations. The complexity estimates presented
in this section incorporate the fundamental assumptions inherent
throughout this paper that fixed interpolation orders $P_s$ and
$P_\text{ang}$, and, thus, fixed numbers $P$ of interpolation points
per cone segment, are utilized.

For a given choice of interpolation orders $P_s$ and $P_\text{ang}$,
the algorithm is completely determined once the number $D$ of levels
and the numbers $n_{s, D}$ and $n_{C, D}$ of level-$D$ radial and
angular interpolation intervals are selected. For a particular
configuration, the parameters $D$, $n_{s, D}$ and $n_{C, D}$ should be
chosen in such a way that the overall computational cost is minimized
while meeting a given accuracy requirement. An increasing number $D$
of levels reduces the cost of the direct neighbour-evaluations by
performing more of them via interpolation to cousin boxes---which
increases the cost of that particular part of the algorithm. The
choice of $D$, $n_{s, D}$ and $n_{C, D}$ should therefore be such that
the overall cost of these two steps is minimized while meeting the
prescribed accuracy---thus achieving optimal runtime for the overall
IFGF method. Note that these selections imply that, for bounded values
of $n_{s, D}$ and $n_{C, D}$ (e.g., we consistently use $n_{s, D}=1$
and $n_{C, D}=2$ in all of our numerical examples) it follows that
$D=\mathcal{O}(\log N)$---since, as it can be easily checked,
e.g. increasing $N\to 4 N$ and $D\to D+1$ maintains the aforementioned
optimality of the choice of the parameter $D$. In sum, the IFGF
algorithm satisfies the following asymptotics as $\kappa \to \infty$:
$\kappa^2 = \mathcal{O}(N)$, $D = \mathcal{O}(\log{N})$,
$|\mathcal{R}_C^d| = \mathcal{O}(1)$ and
$|\mathcal{R}_B^D| = \mathcal{O}(N)$.

The complexity of the IFGF algorithm equals the number of arithmetic
operations performed in Algorithm~\ref{alg:ifgf2}. To evaluate this
complexity we first consider the cost of the level $D$ specific
evaluations performed in the ``for loop'' starting in
Line~\ref{algstate:looprelevantboxeslevelD}. This loop iterates for a
total of $\mathcal{O}(N)$ times. The inner loop starting in
Line~\ref{algstate:loopneighbourpointslevelD}, in turn, performs
$\mathcal{O}(1)$ iterations, just like the loops in the
Lines~\ref{algstate:looprelevantconeslevelD} and
\ref{algstate:loopinterpointslevelD}. In total this yields an
algorithmic complexity of $\mathcal{O}(N)$ operations.

We consider next the section of the algorithm contained in the loop
starting in Line~\ref{algstate:loopd}, which iterates
$\mathcal{O}(\log N)$ times (since $D \sim \log N$).  The loop in
Line~\ref{algstate:looprelevantboxes}, in turn, iterates
$\mathcal{O}(N/4^{D-d})$ times, since the number of relevant boxes is
asymptotically decreased by a factor of $1/4$ as the algorithm
progresses from a given level $d$ to the subsequent level
$d-1$. Similarly, the loop in
Line~\ref{algstate:loopnearestneighbouringsurfacepoints} performs
$\mathcal{O}(4^{D-d})$ iterations---since, as the algorithm progresses
from level $d$ to level $d-1$, the side $H$ of the cousin boxes
increases by a factor of two, and thus, the number of cousin discrete
surface points for each relevant box increases by a factor of
four. {\color{MyGreen} The interpolation procedure in
  Line~\ref{algstate:interpolationtosurfacepoints}, finally, is an
  $\mathcal{O}(1)$ operation since each point $x$ lies in exactly one
  cone segment associated with a given box $B_\mathbf{k}^d$
  (cf. Definition~\eqref{eq:defconesegments} of the cone segments and
  the previous discussion in Section~\ref{sec:algdescription}) and the
  interpolation therefore only requires the evaluation of a single
  fixed order Chebyshev interpolant.} A similar count {\color{MyGreen}
  as for the loop in
  Line~\ref{algstate:loopnearestneighbouringsurfacepoints}} holds for
the loop in Line~\ref{algstate:looprelevantcones} which is also run
$\mathcal{O}(4^{D-d})$ times since, going from a level $d$ to the
parent level $d-1$, the number of relevant cone segments per box
increases by a factor four. The ``for'' loop in
Line~\ref{algstate:loopinterppoints} is performed $\mathcal{O}(1)$
times since the number of interpolation points per cone segment is
constant. Altogether, this yields the desired $\mathcal{O}(N \log N)$
algorithmic complexity.

In the particular case $\kappa = 0$ the cost of the algorithm is still
$\mathcal{O}(N \log N)$ operations, in view of the
$\mathcal{O}(N \log N)$ cost required by the interpolation to surface
points.  But owing to the reduced cost of the procedure of
interpolation to parent-level interpolation points, which results as a
constant number of cone segments per box suffices for $\kappa H_d <1$
(cf. Section \ref{subsec:interpolation}), the overall $\kappa = 0$
IFGF algorithm is significantly faster than it is for cases in which
$\kappa H_d >1$ for some levels $d$. In fact, it is expected that an
algorithmic complexity of $\mathcal{O}(N)$ operations should be
achievable by a suitable modification of algorithm in the Laplace case
$\kappa = 0$, but this topic is not explored in this paper at any
length. \looseness = -1

{\color{red} Finally, we consider the algorithmic complexity of the
  pre-computation stage, namely, the loop starting in
  Line~\ref{algstate:loopinitialization}. But according to the first
  paragraph in Section~\ref{sec:algdescription}, the algorithm
  corresponding to Line~\ref{algstate:DetermineRelevantStuff} is
  executed at a computing cost of $\mathcal{O}(N)$ operations. It
  follows that the full Line~\ref{algstate:loopinitialization} loop
  runs at $\mathcal{O}(N\log N)$ operations, since
  $D = \mathcal{O}(\log N)$.}

\section{Numerical Results} \label{sec:examples} {\color{red} We
  analyze the performance of the proposed IFGF approach by considering
  the computing time and memory required by the algorithm to evaluate
  the discrete operator \eqref{eq:field1} for various $N$-point
  surface discretizations. In each case, the tests concern the
  accelerated evaluation of the full $N$-point sum~\eqref{eq:field1} at each one of $N$
  discretization points $x_\ell$, $\ell=1,\dots,N$---which, if 
  evaluated by direct addition would require a total of
  $\mathcal{O}(N^2)$ operations. We consider various configurations,
  including examples for the Helmholtz ($\kappa \neq 0$) and Laplace
  ($\kappa = 0$) Green functions, and for four different geometries,
  namely, a sphere of radius $a$, an oblate (resp. prolate) spheroid of the form
\begin{equation} \label{eq:defspheroid} \left \{ (x, y, z) \in
    \mathbb{R}^3 \, : \, \frac{x^2}{\alpha^2} + \frac{y^2}{\beta^2} +
    \frac{z^2}{\gamma^2} = a^2\right \},
\end{equation}
with  $\alpha = \beta = 1$ and $\gamma = 0.1$ (resp. $\alpha = \beta = 0.1$ and $\gamma = 1$), and the rough
radius $\approx a$ sphere defined by
\begin{equation} \label{eq:defbumpysphere}
    \left\{ x = \tilde {\mathbf x}\left (a[1+0.05\sin{(40 \theta)} \sin{(40 \varphi)}], \hspace{0.25em}\theta, \hspace{0.25em}\varphi \right) \, : \, \tilde {\mathbf x} \text{ as in } \eqref{eq:defparametrizationr},\hspace{0.25em} \theta \in [0, \pi],\hspace{0.25em} \varphi \in [0, 2 \pi) \right\}.
\end{equation}
The oblate spheroid and rough sphere are depicted in
Figures~\ref{fig:flattenedsphere} and~\ref{fig:bumpysphere},
respectively.}

\begin{figure}
\centering
    \subcaptionbox{\centering A rough sphere of radius \newline
    $r = a(1 + 0.05 \sin{(40 \theta)} \sin{(40 \varphi)})$. \label{fig:bumpysphere}}[0.4\linewidth]{\includegraphics[width=0.3\textwidth]{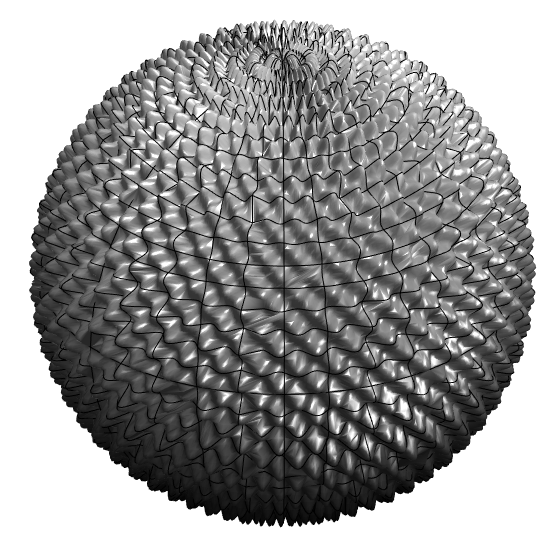}}
    \setbox9=\hbox{\includegraphics[width=.3\linewidth]{BumpySphere_cut.png}}
    \subcaptionbox{\centering An oblate spheroid given by \newline $x^2 + y^2 + (z/0.1)^2 = a^2$.\label{fig:flattenedsphere}}[0.4\linewidth]{\raisebox{\dimexpr\ht9/2-\height/2}{\includegraphics[width=0.3\textwidth]{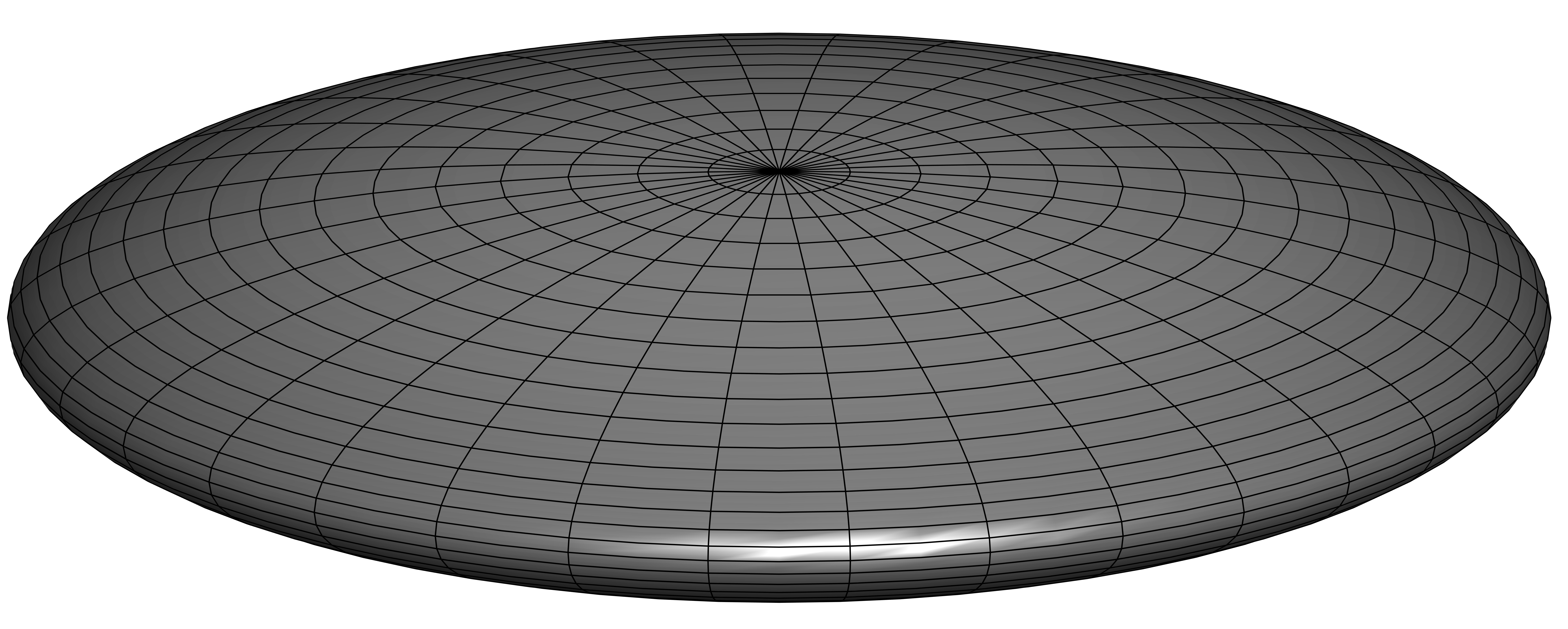}}}
    \caption{Illustration of geometries used for numerical tests.}
  \end{figure}
  All tests were performed on a Lenovo X1 Extreme 2018 Laptop with an
  Intel i7-8750H Processor and 16 GB RAM running Ubuntu 18.04 as
  operating system. The code is a single core implementation in C++ of
  Algorithm \ref{alg:ifgf2} compiled with the Intel C++ compiler
  version 19 and without noteworthy vectorization. Throughout all
  tests, {\color{red} $T_{\text{acc}}$ } denotes the time required for
  a single application of the IFGF method and excludes the
  pre-computation time $T_\text{pre}$ (which is presented separately
  in each case, and which includes the time required for setup of the
  data structures and the determination of the relevant boxes and cone
  segments), but which includes all the other parts of the algorithm
  presented in Section \ref{subsec:algorithm}, including the direct
  evaluation at the neighboring surface discretization points on level
  $D$. {\color{MyGreen} Throughout this Section, solution accuracies
    were estimated on the basis of the relative $L_2$ error norm}
\begin{equation} \label{eq:errorcomputation}
 {\color{MyGreen}  \varepsilon_M = \sqrt{\frac{\sum \limits_{i= 1}^{M} |I(x_{\sigma(i)}) - I_{\text{acc}}(x_{\sigma(i)})|^2}{\sum \limits_{i= 1}^{M} |I(x_{\sigma(i)})|^2}},}
\end{equation} 
{\color{MyGreen} on a subset of $M=1000$ points chosen randomly (using
  a random permutation $\sigma$ of the set of positive integers less
  than equal to $N$) among the $N$ surface discretization points
  $\{x_\ell:\ell = 1,\dots,N\}$ (cf. equation~\eqref{eq:field1}). Here, for a given  $x\in\Gamma_N$, $I(x)$ and $I_{\text{acc}}(x)$ denote the exact and
  accelerated evaluation, respectively, of the discrete operator
  \eqref{eq:field1} at the point $x$. To ensure that
  $M = 1000$ gives a sufficiently accurate approximation of the error,
  the exact relative errors $\varepsilon_N$ accounting for all $N$
  surface discretization points were also obtained for the first three
  test cases shown in Table~\ref{table:timings1}; the results are
  $\varepsilon_N = 3.56 \times 10^{-4}$ ($N = 24576$),
  $\varepsilon_N = 5.71 \times 10^{-4}$ ($N = 98304$) and
  $\varepsilon_N = 9.28 \times 10^{-4}$ ($N = 393216$). (Exact
  relative error evaluation for larger values of $N$ is not practical
  on account of the prohibitive computation times required by the
  non-accelerated operator evaluation.)}
The table columns display the number PPW of surface discretization
points per wavelength, the total number $N$ of surface discretization
points and the wavenumber $\kappa$. The PPW are computed
{\color{MyGreen}on the basis} of the number of surface discretization
points along the equator of a sphere (even for the rough-sphere case),
or the largest equator in the case of spheroids. Note that the PPW
have no impact on the accuracy of the IFGF acceleration, since only
the discrete operator~\eqref{eq:field1} is evaluated in the present
context, instead of an accurate approximation of a full continuous
operator. The PPW are only considered here as they provide an
indication of the discretization levels that might be used to achieve
continuous operator approximations with errors consistent with those
displayed in the various tables presented in this section.  The memory
column displays the peak memory required by the algorithm.\looseness= -1

  In all the tests where the Helmholtz Green function is used, the
  number of levels $D$ in the scheme is chosen in such a way that the
  resulting smallest boxes on level $D$ are approximately a quarter
  wavelength in size ($H_D \approx 0.25 \lambda$). Moreover, for the
  sake of simplicity, the version of the IFGF algorithm described in
  Section~\ref{subsec:algorithm} does not incorporate an adaptive box
  octree (which would stop the partitioning process once a given box
  contains a sufficiently small number of points) but instead always
  partitions boxes until the prescribed level $D$ is reached. Hence, a
  box is a leaf in the tree if and only if it is a level-$D$ box. This
  can lead to large deviations in the number of surface points within
  boxes, in the number of relevant boxes and in the number of relevant
  cone segments. These deviations may result in slight departures from
  the predicted $\mathcal{O}(N \log N)$ costs in terms of memory
  requirements and computing time. The cone segments (as defined in
  \eqref{eq:defcone}) are chosen in such a way that there are eight
  cone segments ($1 \times 2 \times 4$ segments in the $s$, $\theta$
  and $\varphi$ variables, respectively) associated with each of the
  smallest boxes on level $D$ and they are refined according to
  Section~\ref{subsec:interpolation} for the levels $d < D$. Unless
  stated otherwise, each cone segment is assigned
  $P = P_s \times P_\text{ang} \times P_\text{ang}$ interpolation
  points with $P_s = 3$ and $P_\text{ang} = 5$. {\color{red} We note
    that both, the point evaluation of Chebyshev polynomials and the
    computation of Chebyshev coefficients, are performed on the basis
    of simple evaluations of triple sums without employing any
    acceleration methods such as FFTs.}

  The first test investigates the scaling of the algorithm as the
  surface acoustic size is increased and the number of surface
  discretization points $N$ is increased proportionally to achieve a
  constant number of points per wavelength. The results of these tests
  are presented in the Tables \ref{table:timings1},
  \ref{table:timingsflattened1}, and \ref{table:timingsbumpy1} for the
  aforementioned radius-$a$ sphere, the oblate
  spheroid~\eqref{eq:defspheroid} {\color{red} (for
    $\alpha = \beta = 1$, $\gamma = 0.1$)} and rough
  sphere~\eqref{eq:defbumpysphere}, respectively. The acoustic sizes
  of the test geometries range from $4$ wavelengths to $64$
  wavelengths in diameter for the normal and rough sphere cases, and
  up to $128$ wavelengths in large diameter for the case of the oblate
  spheroid.
\begin{table} [h]
\centering
\begin{tabular}{|| r | c | c | >{\color{red}} l | l | l | r ||} 
 \hline
 \multicolumn{1}{|| c |}{\bf $N$} & \multicolumn{1}{|c|}{$\mathbf \kappa a$} & \multicolumn{1}{|c|}{\bf PPW} & \multicolumn{1}{|c|}{$\mathbf \varepsilon$} & \multicolumn{1}{|c|}{\bf $T_{\text{pre}}$ (s)} & \multicolumn{1}{|c|}{\bf {\color{red} $T_{\text{acc}}$ } (s)} & \multicolumn{1}{|c||}{\bf Memory (MB)} \Tstrut \Bstrut \\ \hline \hline
 $24576$ & $4\pi$ &\multirow{5}{*}{$22.4$}& $3.57\times 10^{-4}$ & $5.25 \times 10^{-1}$ & $1.81\times 10^{0}$ & $25$ \Tstrut\\
 $98304$ & $8\pi$ && $5.77\times 10^{-4}$ & $3.33\times 10^{0}$ & $9.30\times 10^{0}$ & $80$ \Strut\\
 $393216$ & $16\pi$ && $9.31\times 10^{-4}$ & $1.86\times 10^{1}$ & $4.55\times 10^{1}$ & $315$ \Strut \\ 
 $1572864$ & $32\pi$ && $1.49\times 10^{-3}$ & $9.74\times 10^{1}$ & $2.21\times 10^{2}$ & $1308$ \Strut \\
 $6291456$ & $64\pi$ && $1.99\times 10^{-3}$ & $4.89\times 10^{2}$ & $1.05\times 10^{3}$ & $5396$ \Bstrut \\ 
 \hline
\end{tabular}
\caption{Computing times {\color{red} $T_{\text{acc}}$ } required by the IFGF accelerator for a
  sphere of radius $a$, with $(P_s,P_\text{ang}) = (3,5)$, and for
  various numbers $N$ of surface discretization points and wavenumbers
  $\kappa a$---at a fixed number of points-per-wavelength. The
  pre-computation times $T_{\text{pre}}$, the resulting relative
  accuracy $\varepsilon$ and the peak memory used are also
  displayed. }
\label{table:timings1}
\end{table}

\begin{table} [h]
\centering
\begin{tabular}{|| r | c | c | >{\color{red}}l | l | l | r ||} 
 \hline
 \multicolumn{1}{|| c |}{\bf $N$} & \multicolumn{1}{|c|}{$\mathbf \kappa a$} & \multicolumn{1}{|c|}{\bf PPW} & \multicolumn{1}{|c|}{$\mathbf \varepsilon$} & \multicolumn{1}{|c|}{\bf $T_{\text{pre}}$ (s)} & \multicolumn{1}{|c|}{\bf {\color{red} $T_{\text{acc}}$ } (s)} & \multicolumn{1}{|c||}{\bf Memory (MB)} \Tstrut \Bstrut \\ \hline \hline
 $24576$ & $4\pi$ & \multirow{6}{*}{$22.4$} & $1.18\times 10^{-4}$ & $1.30 \times 10^{-1}$ & $1.44\times 10^{0}$ & $17$ \Tstrut\\
 $98304$ & $8\pi$ && $1.82\times 10^{-4}$ & $1.15\times 10^{0}$ & $6.52\times 10^{0}$ & $42$ \Strut\\
 $393216$ & $16\pi$ && $2.26\times 10^{-4}$ & $5.03\times 10^{0}$ & $2.87\times 10^{1}$ & $158$ \Strut \\ 
 $1572864$ & $32\pi$ && $2.55 \times 10^{-4}$ & $2.63 \times 10^{1}$ & $1.31 \times 10^{2}$ & $605$ \Strut \\
 $6291456$ & $64\pi$ && $2.83 \times 10^{-4}$ & $1.30 \times 10^{2}$ & $5.72 \times 10^{2}$ & $2273$ \Strut \\ 
  $25165824$ & $128\pi$ && $3.61 \times 10^{-4}$ & $6.27 \times 10^{2}$ & $2.64 \times 10^{3}$ & $9264$ \Bstrut \\ 
 \hline
\end{tabular}
\caption{Same as Table~\ref{table:timings1} but for an oblate spheroid
  of equation $x^2 + y^2 + (z/0.1)^2 = a^2$ depicted in
  Figure~\ref{fig:flattenedsphere}. }
\label{table:timingsflattened1}
\end{table}

\begin{table} [h]
\centering
\begin{tabular}{|| r | c | c | >{\color{red}}l | l | l | r ||} 
 \hline
 \multicolumn{1}{|| c |}{\bf $N$} & \multicolumn{1}{|c|}{$\mathbf \kappa a$} & \multicolumn{1}{|c|}{\bf PPW} & \multicolumn{1}{|c|}{$\mathbf \varepsilon$} & \multicolumn{1}{|c|}{\bf $T_{\text{pre}}$ (s)} & \multicolumn{1}{|c|}{\bf {\color{red} $T_{\text{acc}}$ } (s)} & \multicolumn{1}{|c||}{\bf Memory (MB)} \Tstrut \Bstrut \\ \hline \hline
 $24576$ & $4\pi$ & \multirow{5}{*}{$22.4$} & $2.90 \times 10^{-4}$ & $5.90 \times 10^{-1}$ & $1.90\times 10^{0}$ & $26$ \Tstrut\\
 $98304$ & $8\pi$ && $3.26\times 10^{-4}$ & $4.12 \times 10^{0}$ & $1.08\times 10^{1}$ & $97$ \Strut\\
 $393216$ & $16\pi$ && $5.08\times 10^{-4}$ & $2.58\times 10^{1}$ & $6.11\times 10^{1}$ & $463$ \Strut \\ 
 $1572864$ & $32\pi$ && $2.66 \times 10^{-4}$ & $1.47\times 10^{2}$ & $3.28 \times 10^{2}$ & $2131$ \Strut \\
 $6291456$ & $64\pi$ && $2.56\times 10^{-4}$ & $7.77\times 10^{2}$ & $1.66\times 10^{3}$ & $10502$ \Bstrut \\
 \hline
\end{tabular}
\caption{Same as Table~\ref{table:timings1} but for the rough sphere
  $r = a(1 + 0.05 \sin{(40 \theta)} \sin{(40 \varphi)})$ depicted in
  Figure~\ref{fig:bumpysphere}. }
\label{table:timingsbumpy1}
\end{table}

Several key observations may be drawn from these results. On one hand
we see that, in all cases the computing and memory costs of the method
scale like $\mathcal{O}(N \log N)$, {\color{MyGreen}thus yielding the
  expected improvement over the $\mathcal{O}(N^2)$ costs required by
  the straightforward non-accelerated algorithm}. Additionally, we
note that the computational times and memory required for a given $N$,
which are essentially proportional to the number of relevant cone
segments used, depend on the character of the surface considered
(since the number of relevant cone segments used is heavily dependent
on the surface character), and they can therefore give rise to
significant memory and computing-cost variations in some cases. For
the oblate spheroid case, for example, the number of relevant cone
segments in upward- and downward-facing cone directions is
significantly smaller than the number for the regular sphere case,
whereas the rough sphere requires significantly more relevant cone
segments than the regular sphere, especially in the $s$ variable, to
span the {\color{MyGreen} thickness} of the {\color{MyGreen} roughness
  region}.\looseness = -1

Table \ref{table:timings2} demonstrates the scaling of the IFGF method
for a fixed number $N$ of surface discretization points and increasing
wavenumber $\kappa a$ for the sphere geometry. The memory requirements
and the timings also scale like $\mathcal{O}(\kappa^2 \log \kappa)$
since the interpolation to interpolation points used in the algorithm
is independent of $N$ and scales like
$\mathcal{O}(\kappa^2 \log \kappa)$. But the time required for the
interpolation back to the surface depends on $N$ and is therefore
constant in this particular test---which explains the slight
reductions in overall computing times for a given value of $\kappa a$
over the ones displayed in Table~\ref{table:timings1} for the case in
which $N$ is scaled proportionally to $\kappa a$.

\begin{table} [h]
\centering
\begin{tabular}{|| c | c | r | >{\color{red}}l | l | l | r ||} 
 \hline
 \multicolumn{1}{|| c |}{\bf $N$} & \multicolumn{1}{|c|}{$\mathbf \kappa a$} & \multicolumn{1}{|c|}{\bf PPW} & \multicolumn{1}{|c|}{$\mathbf \varepsilon$} & \multicolumn{1}{|c|}{\bf $T_{\text{pre}}$ (s)} & \multicolumn{1}{|c|}{\bf {\color{red} $T_{\text{acc}}$ } (s)} & \multicolumn{1}{|c||}{\bf Memory (MB)} \Tstrut \Bstrut \\ \hline \hline
 \multirow{3}{*}{$393216$} & $16 \pi$ &$22.4$& $9.31\times 10^{-4}$ & $1.86\times 10^{1}$ & $4.55\times 10^{1}$ & $315$ \Tstrut \\ 
 & $32\pi$ &$11.2$& $1.13\times 10^{-3}$ & $8.17\times 10^{1}$ & $1.33\times 10^{2}$ & $1032$ \Strut\\
 & $64\pi$ &$5.6$& $1.29\times 10^{-3}$ & $3.73\times 10^{2}$ & $5.63\times 10^{2}$ & $3927$ \Bstrut\\
 \hline
\end{tabular}
\caption{Same as Table~\ref{table:timings1} but for a fixed number $N$
  of surface discretization points, demonstrating the scaling of the
  algorithm as $\kappa a$ is increased independently of the
  discretization size while maintaining the accelerator's accuracy.}
\label{table:timings2}
\end{table}

Table \ref{table:timings3} shows a similar sphere test but for a
sphere of constant acoustic size and with various numbers $N$ of
surface discretization points. As we found earlier, the computation
times and memory requirements scale like $\mathcal{O}(N \log N)$ (the
main cost of which stems from the process of interpolation back
to the surface discretization points; see Line
\ref{algstate:loopnearestneighbouringsurfacepoints} in Algorithm
\ref{alg:ifgf2}). Since the cost of the IFGF method (in terms of
computation time and memory requirements) is usually dominated by the
cost of the interpolation to interpolation points (
Line
\ref{algstate:looprelevantcones} in Algorithm \ref{alg:ifgf2}), which
is only dependent on the wavenumber $\kappa a$, the scaling in $N$ is
better than $\mathcal{O}(N\log N)$ until $N$ is sufficiently large, so
that the process of interpolation back to the surface discretization
points requires a large enough portion of the share of the overall
computing time---as observed in the fourth and fifth rows in
Table~\ref{table:timings3}.\looseness = -1
\begin{table} [h]
\centering
\begin{tabular}{|| r | c | r | >{\color{red}}l | l | l | r ||} 
 \hline
 \multicolumn{1}{|| c |}{\bf $N$} & \multicolumn{1}{|c|}{$\mathbf \kappa a$} & \multicolumn{1}{|c|}{\bf PPW} & \multicolumn{1}{|c|}{$\mathbf \varepsilon$} & \multicolumn{1}{|c|}{\bf $T_{\text{pre}}$ (s)} & \multicolumn{1}{|c|}{\bf {\color{red} $T_{\text{acc}}$ } (s)} & \multicolumn{1}{|c||}{\bf Memory (MB)} \Tstrut \Bstrut \\
 \hline\hline
 $24576$ & \multirow{4}{*}{$16\pi$} & $5.6$ & $2.90\times 10^{-4}$ & $9.30\times 10^{0}$ & $1.66\times 10^{1}$ & $228$ \Tstrut\\
 $98304$ &  & $11.2$ & $5.54\times 10^{-4}$ & $1.13\times 10^{1}$ & $2.23\times 10^{1}$ & $267$ \Strut\\
 $393216$ &  & $22.4$ & $9.09\times 10^{-4}$ & $1.40\times 10^{1}$ & $4.14\times 10^{1}$ & $320$ \Strut \\ 
 $1572864$ &  & $44.8$ & $1.04\times 10^{-3}$ & $2.34\times 10^{1}$ & $1.63\times 10^{2}$ & $498$ \Bstrut\\
 \hline
\end{tabular}
\caption{Same as Table~\ref{table:timings1} but for a fixed acoustic
  size $\kappa a$, demonstrating the scaling of the algorithm as $N$
  is increased independently of the acoustic size.}
\label{table:timings3}
\end{table}

{\color{red} Table~\ref{table:timings5} displays results produced by
  the IFGF method for the prolate spheroid~\eqref{eq:defspheroid} with
  $\alpha = \beta = 0.1$ and $\gamma = 1$ at relative error levels
  $\varepsilon \approx 10^{-2}$. The test demonstrates highly
  competitive results in terms of memory requirements and computation
  time for geometries as large as 512 wavelengths in size. The method
  exhibits similar efficiency for the sphere and the oblate spheroid
  geometries at the levels of accuracy presented in
  Table~\ref{table:timings5}.}

\begin{table} [h]
\centering
{\color{red}
\begin{tabular}{|| r | c | c | l | l | l | r ||} 
 \hline
 \multicolumn{1}{|| c |}{\bf $N$} & \multicolumn{1}{|c|}{$\mathbf \kappa a$} & \multicolumn{1}{|c|}{\bf PPW} & \multicolumn{1}{|c|}{$\mathbf \varepsilon$}  & \multicolumn{1}{|c|}{\bf $T_{\text{pre}}$ (s)} & \multicolumn{1}{|c|}{\bf {\color{red} $T_{\text{acc}}$ } (s)} & \multicolumn{1}{|c||}{\bf Memory (MB)} \Tstrut \Bstrut \\
 \hline\hline 
 $393216$ & $16\pi$ & \multirow{4}{*}{$22.4$} & $2.43\times 10^{-3}$ & $2.21 \times 10^0$ & $2.19\times 10^{1}$ & $98$ \Tstrut\\ 
 $1572864$ & $32\pi$ && $5.75\times 10^{-3}$ & $1.16 \times 10^1$ & $9.75\times 10^{1}$ & $371$ \Strut \\
  $6291456$ & $64\pi$ && $8.29\times 10^{-3}$ & $5.70 \times 10^1$ & $4.24\times 10^{2}$ & $1316$ \Strut \\
   $25165824$ & $128\pi$ && $9.84\times 10^{-3}$ & $2.72 \times 10^2$ & $1.85\times 10^{3}$ & $5317$ \Strut \\ 
   $25165824$ & $256\pi$ & $11.2$ & $1.23\times 10^{-2}$ & $3.89 \times 10^2$ & $2.05 \times 10^3$ & $5470$ \Strut \\
    $25165824$ & $512\pi$ & $5.6$ & $1.52\times 10^{-2}$ & $1.01 \times 10^3$ & $2.57\times 10^{3}$ & $10685$ \Bstrut \\ \hline
 \hline
\end{tabular} }
\caption{\color{red} Same as Table~\ref{table:timings1} but for a
  prolate spheroid of equation $(x/0.1)^2 + (y/0.1)^2 + z^2 = a^2$ and
  a target accuracy of $\varepsilon = 10^{-2}$ (cf. the second
  paragraph in the present section with regards to the selection of
  PPW in each case).}
\label{table:timings5}
\end{table}

In our final example, we consider an application of the IFGF method to
a spherical geometry for the Laplace equation. The results are shown
in Table~\ref{table:timingsLaplace}. A perfect $\mathcal{O}(N \log N)$
scaling is observed. Note that the portion of the algorithm
``interpolation to interpolation points'' (Line
\ref{algstate:looprelevantcones} in Algorithm \ref{alg:ifgf2}), which
requires a significant fraction of the computing time in the Helmholtz
case, runs at a negligible cost in the Laplace case---for which a
constant number of cone segments can be used throughout all levels, as
discussed in Section~\ref{subsec:interpolation}.

\begin{table} [h]
\centering
\begin{tabular}{|| r | >{\color{red}}l | l | r ||} 
 \hline
 \multicolumn{1}{|| c |}{\bf $N$} & \multicolumn{1}{|c|}{$\mathbf \varepsilon$} & \multicolumn{1}{|c|}{\bf {\color{red} $T_{\text{acc}}$} (s)} & \multicolumn{1}{|c||}{\bf Memory (MB)} \Tstrut \Bstrut \\
 \hline\hline 
 $24576$ & $1.51\times 10^{-5}$ & $7.81\times 10^{-1}$ & $25$ \Tstrut \\
 $98304$ & $1.38\times 10^{-5}$ & $3.62\times 10^{0}$ & $69$ \Strut \\
 $393216$ & $1.27\times 10^{-5}$ & $1.69\times 10^{1}$ & $246$ \Strut \\ 
 $1572864$ & $1.34\times 10^{-5}$ & $7.45\times 10^{1}$ & $962$ \Strut \\
 $6291456$ & $1.77\times 10^{-5}$ & $3.29\times 10^{2}$ & $3676$ \Bstrut \\
 \hline
\end{tabular}
\caption{Same as Table~\ref{table:timings1} but for the Laplace
  equation ($\kappa a = 0$). The pre-computation times (not shown) are
  negligible in this case, since the most cost-intensive part of the
  pre-computation algorithm, namely, the determination of the relevant
  cone segments, is not necessary in the present Laplace context. Per
  the IFGF Laplace algorithmic prescription, a fixed number of cone
  segments per box is used across all levels in the hierarchical data
  structure. }
\label{table:timingsLaplace}
\end{table}

Another possible optimization which was not used for the IFGF method but for the method presented in \cite{2007DirectionalFMMLexing} is the adaptivity in the box octree which prevents large deviations of surface discretization points per box and therefore increases the efficiency of the algorithm. Using an adaptive octree for the boxes would therefore lead to an improvement in the presented computation times and memory requirements.

\section{Conclusions\label{sec:conclusions}}
This paper introduced the efficient, novel and extremely simple IFGF
approach for the fast evaluation of discrete integral operators of
scattering theory. Only a serial implementation was demonstrated here
but, as suggested in the introduction, the method lends itself to
efficient parallel implementation in distributed-memory computer
clusters. Several important improvements must still be considered,
including, in addition to parallelization, adaptivity in the
box-partitioning method (so as to eliminate large deviations of
surface discretization points per box which impact negatively on the
efficiency of the algorithm) and, as suggested in the introduction,
accelerated Chebyshev interpolations of adequately higher orders while
avoiding use of large scale FFTs. Only the single layer potentials for
the Helmholtz and Laplace Green functions were considered here, but
the proposed methodology is applicable, with minimal modifications, in
a wide range of contexts, possibly including elements such as double
layer potentials, mixed formulations, electromagnetic and elastic
scattering problems, dielectric problems and Stokes flows, as well as
volumetric distribution of sources, etc. Studies of the potential
advantages offered by the IFGF strategies in these areas, together
with the aforementioned projected algorithmic improvements, are left
for future work.

\section*{Acknowledgments}
This work was supported by NSF and DARPA through contracts DMS-1714169
and HR00111720035, and the NSSEFF Vannevar Bush Fellowship under
contract number N00014-16-1-2808.

\bibliographystyle{abbrv}
\bibliography{IFGF}

\end{document}